\allowdisplaybreaks \theoremstyle{definition}
\newtheorem{Definition}{Definition}
\newtheorem{Remark}{Remark}
\newtheorem{Example}{Example}
\theoremstyle{plain}
\newtheorem{Theorem}{Theorem}
\newtheorem{Proposition}{Proposition}
\numberwithin{equation}{section} \textwidth 160mm \textheight 240mm
\title{Algebraic and analytic properties of quasimetric spaces with dilations\thanks{This
research was partially supported by Federal Target Grant
"Scientific and educational personnel of innovation Russia" for
2009-2013 (government contract No. P2224) and by the program
``Leading Scientific Schools'' (project N. NSh-5682.2008.1).}}
\author{Svetlana Selivanova, Sergei Vodopyanov}
\begin{document}

\date{}
\maketitle

\begin{abstract} We provide an axiomatic approach to the theory of local tangent
cones of regular sub-Riemannian manifolds and the
differentiability of mappings between such spaces. This axiomatic
approach relies on a notion of a dilation structure which is
introduced in the general framework of quasimetric spaces.
Considering quasimetrics allows us to cover a general case
including, in particular, minimal smoothness assumptions on the
vector fields defining the sub-Riemannian structure. It is
important to note that the theory existing for metric spaces can
not be directly extended to quasimetric spaces.

{\bf Key words}: Dilations, local group, contractible group,
Mal'tsev's theorem, tangent cone, Carnot-Carath\'{e}odory space,
differentiability

{\bf MSC}: Primary 22E05,  53C17; Secondary 20F17, 22D05,   54E50.
\end{abstract}

\section{Introduction}\label{intro}
We study algebraic and analytic properties of quasimetric spaces
endowed with dilations (roughly speaking, dilations are continuous
one-parameter families of contractive homeomorphisms given in a
neighborhood of each point).

Our work is motivated by investigation of metric properties of
Carnot-Carath\'{e}o\-dory spaces, also referred to as
sub-Riemannian manifolds which model nonholonomic processes and
naturally arise in many applications (see e. g.
\cite{as,bel,lan,bur,cdpt,gro,hor,fs,vk,mm,M,nsw,rs,Stein} and
references therein).

Let us first recall the ``classical'' definition of a
sub-Riemannian manifold. Given a smooth connected manifold
$\mathbb M$ of dimension $N$ and smooth ``horizontal'' vector
fields $X_1,\ldots, X_m\in C^{\infty}$ on $\mathbb M$ (where $m\leq
N$), it is assumed that these vector fields span, together with
their commutators, the tangent space to $\mathbb M$ at each point
(H\"{o}rmander's condition \cite{hor}). By Rashevski\v{\i}-Chow's
Theorem, any two points of $\mathbb M$ can be connected by a
horizontal curve and, therefore, there exists an intrinsic
sub-Riemannian metric $d_c$ on $\mathbb M$ defined as the infimum
over lengths of all horizontal curves.

 Recently discovered applications have lead to considering a
more general situation \cite{karm,vk,dan,vk3,vk4,vk5} when

1) a maximal possible reduction of smoothness of the vector fields
is made (see also \cite{bram,gre1,morbid});

2) instead the H\"{o}rmander's condition, a weaker one of a
``weighted'' filtration of $T\mathbb M$ (see Definition \ref{cc})
is assumed (see also \cite{fol,fs,gre1,nsw,Stein}).

Under these general assumptions, the intrinsic metric $d_c$ might
not exist, but a certain quasimetric (a distance function meeting
a generalized triangle inequality, see Definition \ref{km}) can be
introduced (see \cite{nsw} where various quasimetrics induced by
families of vector fields on $\mathbb R^N$ were studied).

 On the other hand, recent development of
analysis on general metric spaces has lead to the question of
describing the most general approach to the metric geometry of
sub-Riemannian manifolds. Among possible approaches is considering
metric spaces with dilations \cite{bel,bulf,bul_sr,fs}.

Motivated by these considerations, we extend the notion of a
dilation structure to quasimetric spaces and investigate local
properties of the obtained object.

In 1981 M. Gromov has defined  \cite{gro1,gro2} the tangent cone
to a metric space $({\mathbb X},d)$ at a point $x\in {\mathbb X}$
as the  limit of pointed scaled metric spaces $({\mathbb
X},x,\lambda\cdot d)$ (when $\lambda\to\infty$) w. r. t.
Gromov-Hausdorff distance. This notion generalizes the concept of
the tangent space to a manifold and is useful in the general
theory of metric spaces (see e. g. \cite{ber,bur,ch,ld,pet}), in
particular, Carnot-Carath\'{e}odory spaces \cite{mm,mit}.

A straightforward generalization of Gromov's theory would make no
sense for quasimetric spaces, see Remark \ref{cone_rem}. In
\cite{dan,smj} a convergence theory for quasimetric spaces with
the following properties was developed:

1) it includes the Gromov-Hausdorff convergence for metric spaces
as a particular case;

2) the limit is unique up to isometry for boundedly compact
quasimetric spaces;

3) it allows to introduce the notion of the tangent cone in the
same way as for metric spaces.

 In
\cite{smj}  the existence of the tangent cone (w. r. t. the
introduced convergence) to a quasimetric space with dilations is
proved (see Definition \ref{dil_struct}, Axioms (A0)~---(A3), and
Theorem \ref{exist_teor}). This statement contains as a particular
case a similar result by M. Buliga for metric spaces, see for
instance \cite{bulf}, where an axiomatic approach to metric spaces
with dilations is introduced. A similar approach was informally
sketched by A. Bellaiche \cite{bel}.

The main results of the present paper are Theorems \ref{dil_teor}
and \ref{cc_teor}. Theorem \ref{dil_teor} (cf. \cite{bul_contrac})
asserts that an additional axiom (A4) (saying that the limit of a
certain combination of dilations exists) allows to describe the
algebraic structure of the tangent cone: it is a simply connected
Lie group, the Lie algebra of which is graded and nilpotent.

In particular, this result allows to define the differential of a
mapping acting between two quasimetric spaces with dilations in
the same way as it is done in \cite{v} for Carnot-Carath\'{e}odory
spaces.  A brief comparison of this approach with
Margulis-Mostow's concept of differentiability \cite{mm} is given
below in Remark \ref{comp}.

Thus, Theorem \ref{dil_teor} allows to establish algebraic and
analytic properties of the considered space
 from
metric and topological assumptions only. In the present paper we
do not attempt to prove that axioms of a dilation structure
recover sub-Riemannian geometry when the underlying space is a
manifold (or which axioms should be added to prove this). But we
prove that

1) regular sub-Riemannian manifolds are examples of quasimetric
spaces with dilations (Theorem \ref{cc_teor});

2) the tangent cones to quasimetric spaces with dilations are the
same algebraic objects as for regular sub-Riemannian manifolds
(Theorem \ref{dil_teor}),

which can be viewed as a first step in this direction.

In our opinion, the proof of Theorem \ref{dil_teor} is interesting
in its own right. The main step is to apply a  theorem on local
and global topological groups due to A. I. Mal'tsev \cite{mal},
which helps to overcome difficulties concerned with investigation
of a local version of the Hilbert's Fifth Problem
\cite{hilb,gleas,mz}, see Remark \ref{bul_rem}. As an auxiliary
assertion we prove a generalized triangle inequality for local
groups endowed with (quasi)metrics and dilations (see Proposition
\ref{odnor}, Assertion 3)), which is of independent interest and
gives an alternative proof of a similar fact for  (global)
homogeneous groups ~\cite{fs}.

In Section \ref{cc_sec}, we describe regular
Carnot-Carath\'{e}odory spaces as the main example of quasimetric
spaces with dilations. In this case Axiom (A3) is just a local
approximation theorem, and (A4) is a consequence of estimates on
divergence of integral lines of the initial vector fields and the
nilpotentized ones.

In this paper we extend the approach to the subject given in our
short communication \cite{vs}.

We are grateful to Isaac Goldbring for a discussion on some
algebraic aspects of the subject under consideration
 (see Remark \ref{u_n})
and for the references \cite{olv,gol}. We  thank also the
anonymous referee for the careful reading of our paper,
interesting questions and references, as well as useful hints
concerning the presentation and exposition of our results.

 \section{Basic notions and preliminary results}

\begin{Definition}
\label{km} A {\it quasimetric space } $({\mathbb X},d_{\mathbb
X})$ is a topological space ${\mathbb X}$ with a quasimetric
$d_{\mathbb X}$. A {\it quasimetric } is a mapping $d_{\mathbb
X}:{\mathbb X}\times {\mathbb X}\to \mathbb R^+$ with the
following properties:

(1) $d_{\mathbb X}(u,v)\geq 0$; $d_{\mathbb X}(u,v)= 0$ if and
only if $u=v$ $($non-degeneracy$)$;

(2) $d_{\mathbb X}(u,v)\leq c_{\mathbb X}d_{\mathbb X}(v,u)$ where
$1\leq c_{\mathbb X}<\infty$ is a constant independent of $u,v\in
{\mathbb X}$ (generalized symmetry property);

(3) $d_{\mathbb X}(u,v)\leq Q_{\mathbb X}(d_{\mathbb
X}(u,w)+d_{\mathbb X}(w,v))$ where $1\leq Q_{\mathbb X}<\infty$ is
a constant independent of $u,v,w\in {\mathbb X}$ $($generalized
triangle inequality$)$;

(4) the function $d_{\mathbb X}(u,v)$ is upper semi-continuous on
the first argument.

\noindent If $c_{\mathbb X}=1$, $Q_{\mathbb X}=1$, then $({\mathbb
X},d_{\mathbb X})$ is a metric space.
\end{Definition}

\begin{Remark}\label{km_top}
Note that some authors introduce the notion of a quasimetric space
without assuming neither this space be topological nor the
quasimetric be continuous in any sense. Within such framework, the
quasimetric balls need not be open (see e. g. \cite{PS, christ,
hei}). However, due to a theorem by R. A. Mac\`{\i}as and C.
Segovia \cite{macseg}, any quasimetric $d$
 is equivalent to some other quasimetric $\tilde{d}$, the balls associated to
which are open (such a quasimetric looks like
$\rho(x,y)^{\frac{1}{\beta}}$, where $0<\beta\leq 1$ and
$\rho(x,y)$ is a metric) and, hence, define a topology.

In the present paper we study tangent cone questions. It is
important to note, that having the tangent cone to a (quasi)metric
space, one can say nothing about the existence of the tangent cone
to the space with an equivalent (quasi)metric, thus we would like
the balls defined by the initial quasimetric be open. For this
reason we add the upper-continuity condition (4) to the Definition
\ref{km} of a quasimetric space (as it is done e. g. in
\cite{Stein} for the case of $\mathbb R^n$). This condition
guarantees that the balls $B^{d_\mathbb X}(x,r)$ are open sets,
and that convergence w. r. t. the initial topology of $\mathbb X$
implies convergence w. r. t. the topology defined by $d_{\mathbb
X}$.

Actually, we can assume the initial topology on $\mathbb X$
coincide with the topology induced by the equivalent quasimetric
$\tilde{d}$. Then the topologies induced by $d$ and convergence w.
r. t. initial topology on $\mathbb X$ are  equivalent. Further we
always assume, w. l. o. g., this to hold.
\end{Remark}

We denote by $B^{d_\mathbb X}(x,r)=\{y\in \mathbb X\mid d_{\mathbb
X}(y,x)<r\}$ a ball centered at $x$ of radius $r$, w. r. t. the
(quasi)metric $d_{\mathbb X}$. The symbol $\bar{A}$ stands for the
closure of the set $A$. A (quasi)metric space ${\mathbb X}$ is
said to be {\it boundedly compact} if all closed bounded subsets
of ${\mathbb X}$ are compact.

\begin{Definition}
\label{dil_struct} Let $({\mathbb X},d)$ be a complete boundedly
compact quasimetric space and the quasimetric $d$ be continuous on
both arguments. The quasimetric space ${\mathbb X}$ is endowed
with a {\it dilation structure}, denoted as $({\mathbb
X},d,\delta)$, if the following axioms (A0)~--- (A3) hold.

(A0) For all $x\in {\mathbb X}$ and for $\varepsilon\in(0,1]$, in
some neighborhood $U(x)$ of $x$ there are homeomorphisms called
{\it dilations} $\delta_{\varepsilon}^x:U(x)\to
V_{\varepsilon}(x)$ and
$\delta_{\varepsilon^{-1}}^x:W_{\varepsilon^{-1}}(x)\to U(x)$,
where $V_{\varepsilon}(x)\subseteq
W_{\varepsilon^{-1}}(x)\subseteq U(x)$. The family
$\{\delta^x_{\varepsilon}\}_{\varepsilon\in(0,1]}$ is continuous
on $\varepsilon$ (w. r. t. the initial topology on $\mathbb X$,
see Remark \ref{km_top}, and the ordinary topology on $(0,1]$). It
is assumed that there exists an $R>0$ such that
$\bar{B}^d(x,R)\subseteq U(x)$ for all $x\in {\mathbb X}$, and
 for all $\varepsilon<1$ and $\tilde{r}>0$ with the property
 $\bar{B}^d(x,\tilde{r})\subseteq U(x)$ we have the inclusion
  $B^d(x,\tilde{r}\varepsilon)\subseteq
\delta_{\varepsilon}^xB^d(x,\tilde{r})\subset B^d(x,\tilde{r})$.

(A1) For all $ x\in {\mathbb X}$, $y\in U(x)$, we have
$\delta_{\varepsilon}^xx=x,\ \delta_1^x=\text{id},\
\lim\limits_{\varepsilon\to 0} \delta_{\varepsilon}^xy=x$.

(A2) For all $x\in {\mathbb X}$ and  $u\in U(x)$,
 we have
$\delta_{\varepsilon}^x \delta_{\mu}^x u
=\delta_{\varepsilon\mu}^xu$ provided that both parts of this
equality are defined.

(A3)  For any $x\in {\mathbb X}$, uniformly on $u,v\in
\bar{B}^{d}(x,R)$ there exists the limit
 \begin{equation}\label{a3}\lim\limits_{\varepsilon\to 0}
 \frac{1}{\varepsilon}d(\delta_{\varepsilon}^xu,\delta_{\varepsilon}^xv)=
d^x(u,v).\end{equation}

If the function $d^x:U(x)\times U(x) \to \mathbb R^+$ is such that
$d^x(u,v)=0$ implies $u=v$, then the dilation structure is called
{\it nondegenerate}.

If the convergence in $(A3)$ is uniform on $x$ in some compact
set, then the dilation structure is said to be  {\it uniform}.

If the following axiom (A4) holds, then we say that ${\mathbb X}$
is endowed with a {\it strong dilation structure}.

(A4) The limit of the value $\Lambda_{\varepsilon}^x(u,v)=
\delta_{\varepsilon^{-1}}^{\delta_{\varepsilon}^xu}\delta_{\varepsilon}^{x}v$
 exists:
\begin{equation} \label{lam}
\lim\limits_{\varepsilon\to 0}
\Lambda_{\varepsilon}^x(u,v)=\Lambda^x(u,v)\in B^d(x,R),
\end{equation}
This limit is uniform on  $x$ in some compact set and $u,v\in
B^d(x,r)$ for some $0<r\leq R$. See Proposition \ref{obl_opr}
regarding possible choices of $r$.
\end{Definition}

\begin{Remark} \label{bul_rem} These axioms of dilations are a slight
modification and simplification of those proposed in \cite{bulf}
for metric spaces. Essential for proving Theorem  \ref{dil_teor}
is that, in (A0), we require the continuity of dilations on the
parameter $\varepsilon$ which was missed in \cite{bulf}. Note also
that axioms (A1), (A2), (A4) do not depend on the quasimetric. The
condition $\lim\limits_{\varepsilon\to 0}
\delta_{\varepsilon}^xy=x$ informally states that the topological
space ${\mathbb X}$ is locally contractible.\end{Remark}

\begin{Example}In the case when ${\mathbb X}$ is a Riemannian manifold,
dilations can be introduced as homotheties induced by the
Euclidean ones. See \cite{bulf}--\cite{bul_sr_sym} for more
examples.\end{Example}

\begin{Remark}\label{balls} For a general (quasi)metric space
$(\mathbb X,d)$, the closure of a ball need not coincide with the
corresponding closed ball, only the inclusion
$\bar{B}^d(x,r)\subseteq \{y:d(y,x)\leq r\}$ holds. But, in the
case of a (quasi)metric space endowed with a dilation structure,
also the converse inclusion is true. Indeed, let $z\in
\{y:d(y,x)\leq r\}$ be such that $d(z,x)=r$; let
$z_n=\delta^x_{1-\varepsilon_n}z\in B^d(x,r)$, where
$\varepsilon_n\to 0$. Then $d(z_n,z)\to 0$, according to (A0),
(A1) and Remark \ref{km_top}, hence $z\in\bar{B}^d(x,r)$.
\end{Remark}

\begin{Remark}\label{top} By virtue of (A3) and continuity of $d(u,v)$,
the function $d^x(u,v)$ is continuous on both arguments. Further,
the functions $d^x$ and $d$ define the same topology on $U(x)$
(the equivalence of convergences induced by $d^x$ and $d$ can be
verified straightforwardly, using uniformity on $u,v$ in (A3))
and, hence, $(U(x),d^x)$ is boundedly compact.
\end{Remark}

Remark \ref{top} and Axiom (A3) imply

\begin{Proposition}\label{d_x} If $({\mathbb X},d,\delta)$ is a nondegenerate
dilation structure, then $d^x$ is a quasimetric on $B^d(x,R)$ with
the same constants $c_{\mathbb X}$, $Q_{\mathbb X}$ $($see $(2)$,
$(3)$ of Definition $\ref{km}$$)$ as for the initial quasimetric
$d$.
\end{Proposition}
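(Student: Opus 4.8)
The plan is to verify directly the four defining properties of a quasimetric from Definition \ref{km} for the function $d^x$, using that by Axiom (A3) the limit \eqref{a3} exists (uniformly) for all $u,v\in\bar{B}^d(x,R)$, so that $d^x$ is well defined on $B^d(x,R)$, together with the fact recorded in Remark \ref{top} that $d^x$ is continuous. The guiding idea is that every inequality defining a quasimetric is, for each fixed $\varepsilon\in(0,1]$, already satisfied by the quantity $\tfrac{1}{\varepsilon}d(\delta_{\varepsilon}^xu,\delta_{\varepsilon}^xv)$ with the very same constants $c_{\mathbb X}$ and $Q_{\mathbb X}$; since these constants are independent of the points and the scaling factor $\tfrac{1}{\varepsilon}$ appears on all sides, passing to the limit $\varepsilon\to 0$ preserves both the inequalities and the constants.

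Concretely, I would argue as follows. Non-negativity of $d^x$ is immediate, being the limit of non-negative numbers. For the non-degeneracy condition (1), if $u=v$ then $\delta_{\varepsilon}^xu=\delta_{\varepsilon}^xv$ for every $\varepsilon$, so each prelimit term vanishes and $d^x(u,v)=0$; conversely, the implication $d^x(u,v)=0\Rightarrow u=v$ is exactly the assumed nondegeneracy of the dilation structure. For the generalized symmetry (2), property (2) for $d$ gives $\tfrac{1}{\varepsilon}d(\delta_{\varepsilon}^xu,\delta_{\varepsilon}^xv)\leq c_{\mathbb X}\,\tfrac{1}{\varepsilon}d(\delta_{\varepsilon}^xv,\delta_{\varepsilon}^xu)$ for each $\varepsilon$; since both sides converge by Axiom (A3), taking $\varepsilon\to 0$ yields $d^x(u,v)\leq c_{\mathbb X}d^x(v,u)$. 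The generalized triangle inequality (3) is handled in the same manner: for $u,v,w\in B^d(x,R)$, property (3) for $d$ gives $\tfrac{1}{\varepsilon}d(\delta_{\varepsilon}^xu,\delta_{\varepsilon}^xv)\leq Q_{\mathbb X}\bigl(\tfrac{1}{\varepsilon}d(\delta_{\varepsilon}^xu,\delta_{\varepsilon}^xw)+\tfrac{1}{\varepsilon}d(\delta_{\varepsilon}^xw,\delta_{\varepsilon}^xv)\bigr)$, and letting $\varepsilon\to 0$ (all three limits existing by (A3)) gives $d^x(u,v)\leq Q_{\mathbb X}(d^x(u,w)+d^x(w,v))$. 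Finally, the upper semi-continuity requirement (4) follows a fortiori from the continuity of $d^x$ established in Remark \ref{top}.

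I do not expect a genuine obstacle here; the only points requiring any care are purely routine. First, one must ensure that all the dilated points $\delta_{\varepsilon}^xu,\delta_{\varepsilon}^xv,\delta_{\varepsilon}^xw$ lie in the domain on which $d$ and the relevant limits are defined, which is guaranteed by (A0) and by the fact that (A3) provides the limit uniformly on $\bar{B}^d(x,R)\supseteq B^d(x,R)$. Second, one invokes only the elementary facts that non-strict inequalities are preserved under passage to a limit and that the existence of the individual limits (again from (A3)) legitimizes splitting the limit of a sum. Since the constants $c_{\mathbb X}$ and $Q_{\mathbb X}$ enter multiplicatively and do not depend on $\varepsilon$ or on the points, they are transmitted unchanged to $d^x$, which is precisely the assertion of the Proposition.
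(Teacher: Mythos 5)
Your proof is correct and follows exactly the route the paper intends: the paper gives no detailed argument, simply stating that Remark \ref{top} and Axiom (A3) imply the proposition, and your verification (passing each quasimetric inequality for $d$, with its $\varepsilon$-independent constants, through the limit in (A3), and obtaining property (4) from the continuity of $d^x$ noted in Remark \ref{top}) is precisely the spelled-out version of that implication.
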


In the same way as for  metric spaces \cite{bulf}, Axioms (A2),
(A3) imply

\begin{Proposition}\label{cone_prop} The function $d^x$ from Axiom $(A3)$
meets the cone property
    $$d^x(u,v)=\frac{1}{\mu}d^x(\delta_{\mu}^xu,\delta_{\mu}^xv)$$
for all $u,v\in B^d(x,R)$ and $\mu$ such that
$\delta_{\mu}^xu,\delta_{\mu}^xv\in B^d(x,R)$ $($in particular,
for all $\mu\leq 1$$)$.
\end{Proposition}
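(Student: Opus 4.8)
The plan is to derive the cone property directly from the limit defining $d^x$ in Axiom (A3), using the semigroup law (A2) together with a change of scale inside the limit. First I would fix a point $x$, a pair $u,v\in B^d(x,R)$, and a scale $\mu>0$ such that $\delta_\mu^x u,\delta_\mu^x v\in B^d(x,R)$. Since $B^d(x,R)\subseteq\bar{B}^d(x,R)\subseteq U(x)$, both the pair $(u,v)$ and the pair $(\delta_\mu^x u,\delta_\mu^x v)$ lie in $\bar{B}^d(x,R)$, so the uniform limit of (A3) is available for each of them.

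Next I would unfold the right-hand side. Applying (A3) to $\delta_\mu^x u,\delta_\mu^x v$ gives
\[
d^x(\delta_\mu^x u,\delta_\mu^x v)=\lim_{\varepsilon\to 0}\frac1\varepsilon\,d\bigl(\delta_\varepsilon^x\delta_\mu^x u,\ \delta_\varepsilon^x\delta_\mu^x v\bigr).
\]
Because $u,v\in U(x)$ and $\delta_\mu^x u,\delta_\mu^x v\in U(x)$, for small $\varepsilon$ both $\delta_\varepsilon^x\delta_\mu^x u$ and $\delta_{\varepsilon\mu}^x u$ are defined, so (A2) yields $\delta_\varepsilon^x\delta_\mu^x u=\delta_{\varepsilon\mu}^x u$ and likewise for $v$. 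Substituting and setting $\eta=\varepsilon\mu$ — a bijection of a right neighbourhood of $0$ onto itself for fixed $\mu>0$, with $\eta\to 0$ iff $\varepsilon\to 0$ and $\tfrac1\varepsilon=\tfrac\mu\eta$ — I obtain
\[
d^x(\delta_\mu^x u,\delta_\mu^x v)=\lim_{\eta\to 0}\frac\mu\eta\,d\bigl(\delta_\eta^x u,\delta_\eta^x v\bigr)=\mu\,d^x(u,v),
\]
which is the asserted identity after dividing by $\mu$.

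For the parenthetical case $\mu\le 1$, I would note that the domain hypothesis $\delta_\mu^x u,\delta_\mu^x v\in B^d(x,R)$ is automatic: taking $\tilde r=R$ in (A0) gives $\delta_\mu^x B^d(x,R)\subset B^d(x,R)$ for $\mu<1$, while $\delta_1^x=\mathrm{id}$ settles $\mu=1$. The computation is short, so the main — and only mild — obstacle is bookkeeping the domains: one must check that every composition appearing above is genuinely defined on the relevant balls, so that (A3) applies to both pairs and (A2) may be invoked legitimately. The change of variable in the limit causes no difficulty precisely because $\mu$ is a fixed positive constant, so it commutes with passing to $\varepsilon\to 0$.
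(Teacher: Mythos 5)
Your proof is correct and is precisely the argument the paper has in mind: the paper gives no detailed proof, stating only that the cone property follows from Axioms (A2) and (A3) ``in the same way as for metric spaces,'' and your computation --- unfolding the limit in (A3) for the pair $(\delta_\mu^x u,\delta_\mu^x v)$, applying (A2), and rescaling the limit variable $\eta=\varepsilon\mu$ --- is exactly that standard argument, with the domain bookkeeping (including the $\mu\le 1$ case via (A0)) handled properly.
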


\begin{Proposition}\label{sig_inv}If $({\mathbb X},d,\delta)$ is a strong dilation structure
then the limits of the  expressions $\Sigma_{\varepsilon}^x(u,v)=
\delta_{\varepsilon^{-1}}^x\delta_{\varepsilon}^{\delta_{\varepsilon}^xu}v,
\
\operatorname{inv}^x_{\varepsilon}(u)=\delta_{\varepsilon^{-1}}^{\delta_{\varepsilon}^xu}x$
exist:
\begin{equation}\label{sig_inv_def}\lim\limits_{\varepsilon\to 0}
\Sigma_{\varepsilon}^x(u,v)=\Sigma^x(u,v)\in B^d(x,R),\
\lim\limits_{\varepsilon\to 0}
\operatorname{inv}^x_{\varepsilon}(u)=\operatorname{inv}^x(u)\in
B^d(x,R).\end{equation} These limits are uniform on $x$ in some
compact set and $u,v\in B^d(x,\hat{r})$.

Conversely, if the limits \ref{sig_inv_def} exist and are uniform,
then Axiom (A4) holds.
\end{Proposition}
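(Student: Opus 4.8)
The plan is to reduce both directions to two exact algebraic identities linking $\Lambda$, $\Sigma$ and $\operatorname{inv}$, together with a single analytic principle on convergence of inverse homeomorphisms. First I would record the identities. Putting $v=x$ in the definition of $\Lambda_\varepsilon^x$ and using $\delta_\varepsilon^x x=x$ from (A1) gives
\[
\Lambda_\varepsilon^x(u,x)=\delta_{\varepsilon^{-1}}^{\delta_\varepsilon^x u}\delta_\varepsilon^x x=\delta_{\varepsilon^{-1}}^{\delta_\varepsilon^x u}x=\operatorname{inv}_\varepsilon^x(u),
\]
so the existence and uniformity of $\lim_{\varepsilon\to0}\operatorname{inv}_\varepsilon^x(u)$ is immediate from (A4) by specialization. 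Second, writing $p=\delta_\varepsilon^x u$ and reading $\Lambda_\varepsilon^x(u,\cdot)=\delta_{\varepsilon^{-1}}^{p}\circ\delta_\varepsilon^x$ and $\Sigma_\varepsilon^x(u,\cdot)=\delta_{\varepsilon^{-1}}^x\circ\delta_\varepsilon^{p}$ as maps of the second variable, Axiom (A2) in the form $\delta_\varepsilon^x\delta_{\varepsilon^{-1}}^x=\delta_1^x=\mathrm{id}$ and $\delta_{\varepsilon^{-1}}^p\delta_\varepsilon^p=\mathrm{id}$ yields, wherever all compositions are defined,
\[
\Lambda_\varepsilon^x(u,\cdot)\circ\Sigma_\varepsilon^x(u,\cdot)=\mathrm{id}=\Sigma_\varepsilon^x(u,\cdot)\circ\Lambda_\varepsilon^x(u,\cdot),
\]
that is, $\Sigma_\varepsilon^x(u,\cdot)$ is exactly the inverse map of $\Lambda_\varepsilon^x(u,\cdot)$. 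Thus both the direct and the converse assertions become instances of the same principle: if a family of homeomorphisms converges uniformly to a homeomorphism onto its image, then so do their inverses.

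The analytic core is therefore a lemma which I would isolate and prove separately: on a complete boundedly compact quasimetric space, if homeomorphisms $f_\varepsilon$ defined on a ball converge uniformly as $\varepsilon\to0$ to a map $f$ that is a homeomorphism onto its image, then $f_\varepsilon^{-1}\to f^{-1}$ uniformly on compact subsets of the image. I would prove it by the standard subsequence/compactness argument: assuming the contrary, choose $y_{\varepsilon_k}$ with $d\bigl(f_{\varepsilon_k}^{-1}(y_{\varepsilon_k}),f^{-1}(y_{\varepsilon_k})\bigr)\ge\eta$, extract a convergent subsequence of $x_{\varepsilon_k}:=f_{\varepsilon_k}^{-1}(y_{\varepsilon_k})$ using bounded compactness and the equivalence of the $d$- and $d^x$-topologies from Remark \ref{top}, and derive a contradiction from $y_{\varepsilon_k}=f_{\varepsilon_k}(x_{\varepsilon_k})\to f(\lim x_{\varepsilon_k})$ together with continuity of $f^{-1}$. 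The only change forced by the quasimetric setting is to carry the constants $c_{\mathbb X},Q_{\mathbb X}$ through the triangle-type estimates, which does not affect a sequential compactness argument; uniformity in $x$ over a compact set is obtained by letting $x$ vary in the same argument and invoking the uniform forms of (A3) and (A4).

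The main obstacle is verifying the hypothesis of the lemma, namely that the limit $\Lambda^x(u,\cdot)$ (respectively $\Sigma^x(u,\cdot)$) is a homeomorphism onto its image, while keeping track of the shrinking domains on which the compositions are legitimate. For this I would prove that the limit is a $d^x$-isometry. Applying the cone property of Proposition \ref{cone_prop} at the moving centre $p=\delta_\varepsilon^x u$ with $\mu=\varepsilon\le1$ gives the exact identity
\[
d^{p}\bigl(\Lambda_\varepsilon^x(u,v),\Lambda_\varepsilon^x(u,w)\bigr)=\tfrac1\varepsilon\,d^{p}\bigl(\delta_\varepsilon^x v,\delta_\varepsilon^x w\bigr),
\]
and since $p\to x$ by (A1), the uniform-in-basepoint form of (A3) lets me replace $d^p$ by $d^x$ in both members; by the cone property at $x$ the right-hand side then tends to $\tfrac1\varepsilon\,d^x(\delta_\varepsilon^x v,\delta_\varepsilon^x w)=d^x(v,w)$, while the left-hand side tends to $d^x\bigl(\Lambda^x(u,v),\Lambda^x(u,w)\bigr)$ by (A4). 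Hence $d^x\bigl(\Lambda^x(u,v),\Lambda^x(u,w)\bigr)=d^x(v,w)$, so $\Lambda^x(u,\cdot)$ is a $d^x$-isometry, in particular injective with continuous inverse, i.e. a homeomorphism onto its image. I expect the genuinely technical points to be the control of this diagonal double limit (the rate at which $d^p\to d^x$ relative to $\varepsilon$, which is exactly what the uniformity hypotheses in (A3) and (A4) are designed to supply) and the bookkeeping of admissible radii guaranteeing that all nested dilations stay defined, which is where Proposition \ref{obl_opr} is used to fix $r,\hat r\le R$ so that the limits land in $B^d(x,R)$.

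With the lemma and the isometry/homeomorphism property in hand, both directions close quickly. For the direct statement, (A4) gives $\Lambda_\varepsilon^x(u,\cdot)\to\Lambda^x(u,\cdot)$ uniformly to a homeomorphism, so the lemma yields $\Sigma_\varepsilon^x(u,\cdot)=\bigl[\Lambda_\varepsilon^x(u,\cdot)\bigr]^{-1}\to\bigl[\Lambda^x(u,\cdot)\bigr]^{-1}=:\Sigma^x(u,\cdot)$ uniformly, while $\operatorname{inv}_\varepsilon^x=\Lambda_\varepsilon^x(\cdot,x)$ settles $\operatorname{inv}^x$; the isometry bound places both limits in $B^d(x,R)$. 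For the converse, the assumed uniform convergence $\Sigma_\varepsilon^x(u,\cdot)\to\Sigma^x(u,\cdot)$ to a homeomorphism (its homeomorphism property obtained from the analogous cone-property computation) lets the lemma run in reverse, giving $\Lambda_\varepsilon^x(u,\cdot)=\bigl[\Sigma_\varepsilon^x(u,\cdot)\bigr]^{-1}\to\bigl[\Sigma^x(u,\cdot)\bigr]^{-1}$ uniformly, which is precisely Axiom (A4); here the assumption on $\operatorname{inv}^x$ is used, via $\operatorname{inv}_\varepsilon^x=\Lambda_\varepsilon^x(\cdot,x)$, to guarantee that the limit lies in $B^d(x,R)$.
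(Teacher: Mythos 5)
Your two algebraic observations are correct: $\operatorname{inv}_\varepsilon^x(u)=\Lambda_\varepsilon^x(u,x)$ (this is exactly how the paper handles the second limit), and $\Sigma_\varepsilon^x(u,\cdot)$ and $\Lambda_\varepsilon^x(u,\cdot)$ are mutually inverse wherever the compositions are defined. The gap lies in the analytic machinery you build on the second one. To run your inverse-convergence lemma you must know that the limit $\Lambda^x(u,\cdot)$ is a homeomorphism onto its image, and your proof of that (the $d^x$-isometry computation) uses two hypotheses that Proposition \ref{sig_inv} does not grant: uniformity of the convergence in (A3) with respect to the base point (needed to replace $d^p$ by $d^x$ along $p=\delta_\varepsilon^x u\to x$), and nondegeneracy of $d^x$ (without it a $d^x$-isometry need not be injective). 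In the paper's terminology these are the separate, optional properties ``uniform'' and ``nondegenerate''; the proposition assumes only a \emph{strong} dilation structure, i.e.\ (A0)--(A4). Note that the paper's Proposition \ref{isom}, which is exactly your isometry claim, does assume all three properties and comes logically \emph{after} Proposition \ref{sig_inv}. Your appeal to Proposition \ref{obl_opr} has the same problem (it too assumes uniformity), and your converse direction repeats it, since the ``analogous cone-property computation'' for $\Sigma^x(u,\cdot)$ again needs base-point uniformity of (A3). So as written, your argument proves the statement only for strong nondegenerate uniform structures, which is strictly weaker than what is claimed.

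There is a second, independent defect in the lemma itself: quasimetric spaces have no invariance of domain, so even granting that $\Lambda^x(u,\cdot)$ is an injective isometry you cannot conclude that its image contains a ball around $x$; hence $\bigl[\Lambda^x(u,\cdot)\bigr]^{-1}$, which you take as the \emph{definition} of $\Sigma^x(u,\cdot)$, may a priori fail to be defined on the set $B^d(x,\hat r)$ where convergence is asserted, and your contradiction argument silently assumes that $f^{-1}(y_{\varepsilon_k})$ exists. The paper's proof sidesteps all of this with conjugation identities rather than inversion: one checks, using (A1) and (A2) (in particular $\delta_\varepsilon^{\delta_\varepsilon^x u}\operatorname{inv}_\varepsilon^x u=x$), that $\Sigma_\varepsilon^x(u,v)=\Lambda_\varepsilon^{\delta_\varepsilon^x u}\bigl(\operatorname{inv}_\varepsilon^x u,v\bigr)$ and, conversely, $\Sigma_\varepsilon^{\delta_\varepsilon^x u}\bigl(\operatorname{inv}_\varepsilon^x u,v\bigr)=\Lambda_\varepsilon^x(u,v)$. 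Since $\delta_\varepsilon^x u\to x$ and $\operatorname{inv}_\varepsilon^x u$ converges, the existence and uniformity of the limit of $\Sigma_\varepsilon^x$ then follow directly from the uniformity \emph{in the base point} that is built into Axiom (A4) itself, with no appeal to (A3), to nondegeneracy, or to any inverse-function argument; the converse is read off from the second identity in the same way, yielding $\Lambda^x(u,v)=\Sigma^x(\operatorname{inv}^x u,v)$. To salvage your route you would have to either add the hypotheses ``nondegenerate and uniform'' (weakening the proposition) or find a uniformity-free proof of injectivity and image coverage — and the latter is precisely what the conjugation identities provide for free.
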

\begin{proof}The assertion about the second limit follows from the fact that
$\operatorname{inv}^x_{\varepsilon}(u)=\Lambda_{\varepsilon}^x(u,x)$.
Easy calculations show that
$\Sigma^x_{\varepsilon}(u,v)=\Lambda_{\varepsilon}^{\delta_{\varepsilon}^xu}
(\text{inv}^x_{\varepsilon}u,v)$ from where, taking in account the
uniformity of convergence in (A4), the existence and uniformity of
the first limit follows.

Moreover, it is easy to see that
$\Sigma_{\varepsilon}^{\delta_{\varepsilon}^xu}
(\text{inv}^x_{\varepsilon}u,v)=\Lambda^x_{\varepsilon}(u,v),$
hence
\begin{equation}\label{del_sig}\Lambda^x(u,v)=\Sigma^x(\text{inv}^xu,v).\end{equation}
Therefore, from the existence and uniformity of the limits
\ref{sig_inv_def}, Axiom (A4) follows.
\end{proof}

Further we assume, w. l. o. g., that $\hat{r}=r$ (otherwise, take
the intersection of the corresponding balls), i. e. functions
$\Lambda^x$ and $\Sigma^x$ are defined on the same subset of
$U(x)\times U(x)$. The following proposition can be viewed as an
example of existence of one of the combinations from Proposition
\ref{sig_inv} (cf. the arguments of Bellaiche \cite{bel}, the last
section).

\begin{Proposition}\label{obl_opr} Let  $({\mathbb X},d,\delta)$ be
a uniform dilation structure. Then there are $r,\varepsilon_0>0$
such that for all $\varepsilon\in(0,\varepsilon_0]$, $u,v\in
B^d(x,r)$ the combination $\Sigma_{\varepsilon}^x(u,v)=
\delta_{\varepsilon^{-1}}^x\delta_{\varepsilon}^{\delta_{\varepsilon}^xu}v\in
U(x)$ from Proposition $\ref{sig_inv}$ is defined.
\end{Proposition}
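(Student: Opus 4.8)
The plan is to verify, by careful bookkeeping of the domains appearing in Axiom (A0), that each of the three dilations composing $\Sigma_{\varepsilon}^x(u,v)=\delta_{\varepsilon^{-1}}^x\delta_{\varepsilon}^{\delta_{\varepsilon}^xu}v$ is applied to a point lying in its own domain, once $r$ and $\varepsilon_0$ are chosen small enough. Reading the composition from the inside out, I would set $y=\delta_{\varepsilon}^xu$ and $z=\delta_{\varepsilon}^yv$, and then establish that (i) $u\in U(x)$, (ii) $v\in U(y)$, and (iii) $z\in W_{\varepsilon^{-1}}(x)$; the last of these is the delicate point, and then $\Sigma_{\varepsilon}^x(u,v)=\delta_{\varepsilon^{-1}}^xz\in U(x)$ automatically.

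For (i) it suffices to take $r\le R$, since then $B^d(x,r)\subseteq\bar{B}^d(x,R)\subseteq U(x)$ by (A0). For (ii), the contraction inclusion of (A0) (with $\tilde r=r$) gives $y=\delta_{\varepsilon}^xu\in B^d(x,r)$, so that both $v$ and $y$ lie in $B^d(x,r)$; the generalized symmetry and triangle inequalities of Definition \ref{km} then yield $d(v,y)\le Q_{\mathbb X}(d(v,x)+d(x,y))\le Q_{\mathbb X}(1+c_{\mathbb X})r$. Choosing $r$ so small that $Q_{\mathbb X}(1+c_{\mathbb X})r<R$ gives $v\in B^d(y,R)\subseteq U(y)$, so that $\delta_{\varepsilon}^yv$ is defined.

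For (iii) I would use (A0) in the form $B^d(x,R\varepsilon)\subseteq\delta_{\varepsilon}^xB^d(x,R)\subseteq V_{\varepsilon}(x)\subseteq W_{\varepsilon^{-1}}(x)$, which reduces the claim to the metric estimate $d(z,x)<R\varepsilon$. To obtain it I would extract from Axiom (A3) a uniform contraction bound of the following shape: for every $\eta>0$ there exist $\rho,\varepsilon_0>0$ such that $d(\delta_{\varepsilon}^{x'}w,x')\le\eta\varepsilon$ whenever the center $x'$ ranges over the prescribed compact set, $w\in\bar{B}^d(x',\rho)$, and $\varepsilon\le\varepsilon_0$. This follows because $\frac1\varepsilon d(\delta_{\varepsilon}^{x'}w,x')\to d^{x'}(w,x')$ as $\varepsilon\to0$ uniformly in $x'$ and $w$ (here the uniform dilation structure is used, together with $\delta_{\varepsilon}^{x'}x'=x'$ from (A1)), while $d^{x'}(w,x')$ is continuous and vanishes on the diagonal, so it stays below $\eta$ for $w$ close enough to $x'$. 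I would then fix $r$ so small that $r\le R$ and $Q_{\mathbb X}(1+c_{\mathbb X})r\le\rho$; applying the bound at the center $x$ (with $w=u$) gives $d(y,x)\le\eta\varepsilon$, and applying it at the center $y$ (with $w=v$, legitimate since $d(v,y)\le Q_{\mathbb X}(1+c_{\mathbb X})r\le\rho$) gives $d(z,y)\le\eta\varepsilon$. The generalized triangle inequality then yields $d(z,x)\le Q_{\mathbb X}(d(z,y)+d(y,x))\le 2Q_{\mathbb X}\eta\varepsilon$, and choosing $\eta$ with $2Q_{\mathbb X}\eta<R$ gives $d(z,x)<R\varepsilon$, completing (iii).

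The main obstacle is precisely this passage in step (iii) from the pointwise limit in (A3) to a contraction estimate holding uniformly over centers with a single threshold $\varepsilon_0$. This is where uniformity of the dilation structure is indispensable: the auxiliary center $y=\delta_{\varepsilon}^xu$ varies with $\varepsilon$, yet always lies in the compact set $\bar{B}^d(x,R)$, so uniform convergence over that set of centers supplies a threshold valid for $x$ and $y$ simultaneously. A secondary point to verify is that $\sup\{d^{x'}(w,x'):w\in\bar{B}^d(x',\rho)\}\to0$ as $\rho\to0$ uniformly in $x'$, which I would deduce from the joint continuity of $d^x$ (Remark \ref{top}) together with the same uniformity; once this is in hand, shrinking $r$ forces all the constants to be as small as required.
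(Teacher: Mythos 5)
Your proposal is correct and follows essentially the same route as the paper's own proof: the same decomposition via the intermediate points $\delta_{\varepsilon}^xu$ and $\delta_{\varepsilon}^{\delta_{\varepsilon}^xu}v$, the same reduction to showing the innermost point lies in $B^d(x,R\varepsilon)\subseteq W_{\varepsilon^{-1}}(x)$, the same use of the uniformity in (A3) to bound the two distances by a small multiple of $\varepsilon$ at both centers, and the same final combination via the generalized triangle inequality with the factor $2Q_{\mathbb X}$. The only differences are cosmetic: you make explicit the check that $v$ lies in the domain $U(\delta_{\varepsilon}^xu)$ (which the paper glosses over) and you parametrize the smallness by $\eta$ rather than by the paper's $\xi(1+\delta)$ with $\xi<\frac{R}{2Q_{\mathbb X}}$.
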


\begin{proof}
Let $x^{\prime}=\delta_{\varepsilon}^xu$,
$x^{\prime\prime}=\delta_{\varepsilon}^{x^{\prime}}v$. To show the
existence of the combination $\Sigma_{\varepsilon}^x(u,v)\in U(x)$
it suffices to verify that $x^{\prime\prime}\in
W_{\varepsilon^{-1}}(x)$. Let us prove that, for suitable
$u,v,\varepsilon$, it is true that $x^{\prime\prime}\in
B^d(x,R\varepsilon)\subseteq W_{\varepsilon^{-1}}(x)$. It follows
from Proposition \ref{cone_prop} that
$d^x(x,x^{\prime})=d^x(x,\delta_{\varepsilon}^xu)=\varepsilon
d^x(x,u)$,
$d^{x^{\prime}}(x^{\prime},x^{\prime\prime})=\varepsilon
d^{x^{\prime}}(x^{\prime},v)$. Due to (A3), for any $\delta>0$
there is an $\varepsilon>0$ such that: if
$d^x(p,q)=O(\varepsilon)$, then $d^x(p,q)(1-\delta)\leq d(p,q)\leq
d^x(p,q)(1+\delta)$. Let $p=x$, $q=x^{\prime}$ and consider
arbitrary $r, R^x>0$ such that $B^d(x,r)\subseteq
B^{d^x}(x,R^x)\subseteq B^d(x,R)$ (such reals exist according to
Remark \ref{top}). For any $\delta>0$ there is an
$\varepsilon_0^{\prime}>0$ such that for $u\in B^d(x,r)$,
$\varepsilon\in(0,\varepsilon_0^{\prime}]$ we have
$d(x,x^{\prime})\leq\varepsilon R^x(1+\delta)$. Analogously, there
is an $\varepsilon_0^{\prime\prime}>0$ such that for $v\in
B^d(x,r)$, $\varepsilon\in(0,\varepsilon_0^{\prime\prime}]$  we
have $d(x^{\prime},x^{\prime\prime})\leq\varepsilon
R^{x^\prime}(1+\delta)$. Due to uniformity of the limit in (A3) we
can assume, w. l. o. g., that $R^x=R^{x^\prime}=\xi$. Let
$\varepsilon_0=\min\{\varepsilon_0^{\prime},\varepsilon_0^{\prime\prime}\}$.
The generalized triangle inequality implies
$d(x,x^{\prime\prime})\leq Q_{\mathbb
X}\left(d(x,x^{\prime})+d(x^{\prime},x^{\prime\prime})\right)\leq
2Q_{\mathbb X}\varepsilon\xi(1+\delta)$. To satisfy the desired
inequality $d(x,x^{\prime\prime})\leq R\varepsilon$ it suffices to
take an arbitrary $\xi< \frac{R}{2Q_{\mathbb X}}$ such that
$B^{d^x}(x,\xi)\subseteq B^d(x,R)$. Then an arbitrary number $r$
satisfying $B^d(x,r)\subseteq B^{d^x}(x,\xi)$ will be as desired.
\end{proof}

A {\it pointed $($quasi$)$metric space} is a pair $({\mathbb
X},p)$ consisting of a (quasi)metric space ${\mathbb X}$ and a
point $p\in {\mathbb X}$. Whenever we want to emphasize what kind
of (quasi)metric is on ${\mathbb X}$, we shall write the pointed
space as a triple $({\mathbb {\mathbb X}},p,d_{\mathbb X})$.

\begin{Definition}[\cite{dan, smj}]\label{km_conv_nc}
A sequence $({\mathbb X}_n,p_n,d_{{\mathbb X}_n})$ of pointed
quasimetric spaces {\it converges} to the pointed space $({\mathbb
X},p,d_{\mathbb X})$, if there exists a sequence of reals
$\delta_n\to 0$ such that for each $r>0$ there exist mappings
$f_{n,r}:B^{d_{{\mathbb X}_n}}(p_n,r+\delta_n)\to {\mathbb X},\
g_{n,r}:B^{d_{{\mathbb X}}}(p,r+2\delta_n)\to {\mathbb X}_n$ such
that

1) $f_{n,r}(p_n)=p, \ g_{n,r}(p)=p_n$;

2) $\operatorname{dis}(f_{n,r})<\delta_n,\
\operatorname{dis}(g_{n,r})<\delta_n;$

3) $\sup\limits_{x\in B^{d_{{\mathbb
X}_n}}(p_n,r+\delta_n)}d_{{\mathbb
X}_n}(x,g_{n,r}(f_{n,r}(x)))<\delta_n$.
\end{Definition}

Here $\operatorname{dis}(f)=\sup\limits_{u,v\in {\mathbb
X}}|d_{\mathbb Y}(f(u),f(v))-d_{\mathbb  X}(u,v)|$ is the {\it
distortion} of a mapping $f:({\mathbb X},d_{\mathbb X})\to
({\mathbb Y},d_{\mathbb Y})$ which characterizes the difference of
$f$ from an isometry.

\begin{Theorem}[\rm\cite{smj}]\label{km_teor}
1. Reduced to the case of metric spaces, the convergence of
Definition \ref{km_conv_nc} is equivalent to the Gromov-Hausdorff
one;

2) Let $(X,p),\ (Y,q)$ be two complete pointed quasimetric spaces,
each obtained as a limit  of the same sequence $(X_n,p_n)$ such
that the constants $\{Q_{X_n}\}$ are uniformly bounded:
$|Q_{X_n}|\leq C$ for all $n\in\mathbb N$. If $X$ is boundedly
compact, then $X$ and $Y$ are isometric.
\end{Theorem}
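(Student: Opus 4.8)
The plan is to treat Part 1 as a translation between two equivalent encodings of ``approximate isometry,'' and Part 2 as a uniqueness-of-limit argument built on bounded compactness. For Part 1, note that Definition \ref{km_conv_nc} packages a \emph{pair} of maps $f_{n,r},g_{n,r}$ of small distortion with $g_{n,r}\circ f_{n,r}$ close to the identity, whereas pointed Gromov--Hausdorff convergence is usually phrased through a single pointed $\varepsilon$-isometry with $\varepsilon$-dense image. To go from Definition \ref{km_conv_nc} to Gromov--Hausdorff, I would take $f_{n,r}$ itself: it fixes the base point and has $\operatorname{dis}(f_{n,r})<\delta_n$, and I would verify approximate surjectivity by checking $f_{n,r}\circ g_{n,r}\approx\mathrm{id}$, which follows from condition 3) ($g_{n,r}\circ f_{n,r}\approx\mathrm{id}$) together with $\operatorname{dis}(g_{n,r})<\delta_n$ via a one-line estimate. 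Conversely, from a pointed $\varepsilon$-isometry $f$ with dense image I would construct $g$ by assigning to each target point an approximate preimage and then check the three conditions; the only real care is the bookkeeping of the radii $r,\,r+\delta_n,\,r+2\delta_n$ and the diagonalization of the tolerances into one sequence $\delta_n\to0$. Since in the metric case the constants $c,Q$ equal $1$, no constants intervene.

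For Part 2 I would first manufacture approximate isometries directly between $X$ and $Y$ by composing the maps supplied by Definition \ref{km_conv_nc} for the two limits: set $\Phi_n=f^Y_n\circ g^X_n$ and $\Psi_n=f^X_n\circ g^Y_n$, with radii arranged so the compositions are defined on balls that exhaust $X$ and $Y$. Then $\Phi_n(p)=q$, $\Psi_n(q)=p$, and $\operatorname{dis}(\Phi_n),\operatorname{dis}(\Psi_n)\to0$. Using condition 3) for \emph{both} spaces together with the derived relation $f\circ g\approx\mathrm{id}$ from Part 1, the compositions $\Psi_n\circ\Phi_n$ and $\Phi_n\circ\Psi_n$ are within $O(\delta_n)$ of the respective identities. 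Here the finiteness of the generalized-triangle-inequality constants in the limit, which is exactly what the hypothesis $|Q_{X_n}|\le C$ guarantees, is what keeps all these estimates from degenerating.

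Next I would extract a limiting map \emph{into} the boundedly compact space. Bounded compactness makes $X$ separable; approximate surjectivity of $\Phi_n$ carries a countable dense subset of $X$ to a set dense in $Y$, so $Y$ is separable as well, with a dense family $\{b_j\}$. For each fixed $j$ the points $\Psi_n(b_j)$ lie in one fixed compact ball of $X$, so a diagonal argument produces a subsequence along which $\Psi(b_j):=\lim_n\Psi_n(b_j)$ exists for all $j$. Because $\operatorname{dis}(\Psi_n)\to0$ and the quasimetric of $X$ is continuous, $\Psi$ preserves $d$ on $\{b_j\}$ and extends to an isometric embedding $\Psi\colon Y\to X$ with $\Psi(q)=p$.

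The main obstacle is surjectivity of $\Psi$: the hypotheses are asymmetric, since only $X$ is assumed boundedly compact, so I cannot symmetrically embed $Y$ into $X$ from the start. The device is to \emph{upgrade} $Y$ first: as $\Psi$ is an isometric embedding of the complete space $Y$, its image is closed in $X$, and the image of each closed ball of $Y$ is closed and bounded, hence compact; therefore $Y$ is boundedly compact too. With both spaces proper (and their topologies metrizable by Remark \ref{km_top}, so that Arzel\`a--Ascoli applies), the approximate isometries $\Psi_n,\Phi_n$ are equicontinuous and converge, along a further subsequence, \emph{uniformly on compact balls} to isometric embeddings $\Psi\colon Y\to X$ and $\Phi\colon X\to Y$. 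Finally I would push the relations $\Psi_n\circ\Phi_n\approx\mathrm{id}_X$ and $\Phi_n\circ\Psi_n\approx\mathrm{id}_Y$ through these uniform limits to obtain $\Psi\circ\Phi=\mathrm{id}_X$ and $\Phi\circ\Psi=\mathrm{id}_Y$, whence $\Psi$ and $\Phi$ are mutually inverse isometries and $X\cong Y$. I expect the two delicate points to be (i) the passage from pointwise to uniform convergence, which is precisely what forces the detour through bounded compactness of $Y$, and (ii) controlling the triangle-inequality constant uniformly through the compositions, a phenomenon absent in the metric setting and the place where $|Q_{X_n}|\le C$ is indispensable.
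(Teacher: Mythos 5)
The paper itself contains no proof of this theorem: it is imported verbatim from \cite{smj}, so there is no internal argument to compare yours against, and I have to judge your proposal on its own merits. Your Part 1 is fine: the one-line derivation of $f_{n,r}\circ g_{n,r}\approx\mathrm{id}$ from condition 3) and $\operatorname{dis}(g_{n,r})<\delta_n$ is correct, and the radius bookkeeping is routine. The skeleton of your Part 2 --- compose the two families of maps into approximate isometries $\Phi_n,\Psi_n$ between $X$ and $Y$, extract pointwise limits on a countable dense set landing in the boundedly compact space $X$, upgrade $Y$ to bounded compactness through the resulting embedding, then produce mutually inverse isometries --- is the classical Gromov--Hausdorff uniqueness argument, and in the metric case it would go through.

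The genuine gap is at the one step where ``quasimetric'' differs from ``metric'', and it is exactly the point the paper warns about (abstract, Remark \ref{conv_rem}). To conclude that $\Psi(b_j):=\lim_n\Psi_n(b_j)$ preserves distances, you must pass to the limit in \emph{both} arguments of $d_X$; but Definition \ref{km} grants only upper semi-continuity of $d_{\mathbb X}(\cdot,v)$ in the first argument, and your parenthetical ``the quasimetric of $X$ is continuous'' assumes precisely what is not given. With only the stated axioms, from $a_n\to a$, $b_n\to b$ and $d_X(a_n,b_n)\to L$ one can extract (via the generalized triangle and symmetry constants) only two-sided bounds of the form $L/Q_X\le d_X(a,b)\le Q_X^2\,L$, so your limit map is merely bi-Lipschitz with constants built from $Q_X$, $c_X$ --- not an isometric embedding --- and the same defect propagates to the final identities $\Psi\circ\Phi=\mathrm{id}_X$, $\Phi\circ\Psi=\mathrm{id}_Y$. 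A correct proof must either establish genuine continuity of the limit quasimetrics or organize the argument so that distances are never passed through two-variable limits; your proposal does neither. Two secondary points: Arzel\`a--Ascoli cannot be invoked as stated, since maps of small distortion need not be continuous (your dense-set diagonal extraction is the right tool and makes that detour superfluous); and although you declare the hypothesis $|Q_{X_n}|\le C$ ``indispensable'', no step of your sketch actually uses it --- every estimate you write invokes only the finiteness of the constants of the limit spaces $X$ and $Y$ --- so you have not located where that hypothesis really enters.
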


\begin{Remark}\label{conv_rem}
 Note that a
straightforward generalization of Gromov's theory to the case of
quasimetric spaces is, for various reasons, impossible. For
example, the Gromov-Hausdorff distance between two bounded
quasimetric spaces is equal to zero \cite{gre} and, thus, makes no
sense in this context. Besides that, in \cite{gro,bel} convergence
is first defined for compact spaces;  convergence of boundedly
compact spaces is defined as convergence of all (compact) balls.
For quasimetric spaces, this approach  would not yield uniqueness
of the limit up to isometry.
\end{Remark}

\begin{Definition}\label{cone_def} Let ${\mathbb {\mathbb X}}$ be a boundedly compact
(quasi)metric space, $p\in X$. If the limit of pointed spaces
$\lim\limits_{\lambda\to\infty}(\lambda {\mathbb
X},p)=(T_p{\mathbb X},e)$  exists (in the sense of Definition
\ref{km_conv_nc}), then
 $T_p {\mathbb X}$ is called the {\it tangent cone} to ${\mathbb X}$ at $p$.
Here $\lambda {\mathbb X}=({\mathbb X},\lambda\cdot d_{\mathbb
X})$; the symbol $\lim\limits_{\lambda\to\infty} (\lambda {\mathbb
X},p)$ means that, for any sequence $\lambda_n\to\infty$, there
exists $\lim\limits_{\lambda_n\to\infty}(\lambda_n {\mathbb X},p)$
which is
 independent of the choice of sequence $\lambda_n\to\infty$ as
 $n\to\infty$.

Any neighborhood $U(e)\subseteq T_p{\mathbb X}$ of the basepoint
element $e\in T_p{\mathbb X}$ is said to be a {\it local tangent
cone} to ${\mathbb X}$ at $p$.
\end{Definition}

\begin{Remark}\label{cone_rem}
Theorem \ref{km_teor} implies that, for complete boundedly compact
quasimetric spaces, the tangent cone is unique up to isometry,
i.~e. one should treat the tangent cone from Definition
\ref{cone_def} as a
 class of pointed quasimetric spaces isometric to each other. Note also that
  the tangent cone is completely defined by any (arbitrarily small) neighborhood
  of the point. More precisely, if $U$ is a neighborhood
of the point $p\in {\mathbb X}$ then the tangent cones of $U$ and
${\mathbb X}$ at $p$ are isometric. Moreover, the quasimetric
space $(T_p{\mathbb X},e)$ is a cone in the sense that it is
invariant under scalings, i. e. $(T_p{\mathbb X},e)$ is isometric
to $(\lambda T_p{\mathbb X},e)$ for all $\lambda>0$.
\end{Remark}

\begin{Theorem}[\rm\cite{smj}]\label{exist_teor} Let
$({\mathbb X},d,\delta)$ be a nondegenerate dilation structure.
Then $(U(x),x,d^x)$ is a local tangent cone to ${\mathbb X}$ at
$x$.
\end{Theorem}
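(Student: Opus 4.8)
The plan is to realize the defining convergence of the tangent cone (Definition \ref{km_conv_nc}) by using the dilations themselves as the connecting almost-isometries. Fix a sequence $\lambda_n\to\infty$ and set $\varepsilon_n=1/\lambda_n\to 0$, so that the scaled space is $(\lambda_n\mathbb X,x)=(\mathbb X,\lambda_n d,x)$ and the proposed limit is $(U(x),x,d^x)$. As connecting maps I would take the dilation and its inverse: for the map from the scaled space into the limit put $f_{n,r}=\delta^x_{\varepsilon_n^{-1}}$ (the expansion), and for the map from the limit into the scaled space put $g_{n,r}=\delta^x_{\varepsilon_n}$ (the contraction), each restricted to the ball required by Definition \ref{km_conv_nc}. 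Both basepoints equal $x$, and Axiom (A1) gives $\delta^x_{\varepsilon_n}x=\delta^x_{\varepsilon_n^{-1}}x=x$, so condition~(1) holds. By (A2) the composition $g_{n,r}\circ f_{n,r}=\delta^x_{\varepsilon_n}\delta^x_{\varepsilon_n^{-1}}=\delta^x_1=\text{id}$ wherever it is defined, so the left-hand side of condition~(3) is identically zero.

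The content of the argument is condition~(2), the distortion estimate, and this is essentially a restatement of Axiom (A3). Writing $w_i=\delta^x_{\varepsilon_n}u_i$ and using $\lambda_n d(\delta^x_{\varepsilon_n}u_1,\delta^x_{\varepsilon_n}u_2)=\tfrac{1}{\varepsilon_n}d(\delta^x_{\varepsilon_n}u_1,\delta^x_{\varepsilon_n}u_2)$, the distortions of both $f_{n,r}$ and $g_{n,r}$ reduce to
$$\sup_{u_1,u_2}\Bigl|\tfrac{1}{\varepsilon_n}d(\delta^x_{\varepsilon_n}u_1,\delta^x_{\varepsilon_n}u_2)-d^x(u_1,u_2)\Bigr|,$$
which tends to $0$ by the \emph{uniform} convergence asserted in (A3) on $\bar{B}^d(x,R)$. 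One then chooses $\delta_n\to 0$ dominating these distortions; crucially $\delta_n$ can be taken independent of $r$ precisely because the convergence in (A3) is uniform over the whole ball $\bar{B}^d(x,R)$.

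The remaining work is bookkeeping of domains through Axiom (A0). For a fixed $r$ the ball $B^{\lambda_n d}(x,r+\delta_n)=B^d(x,(r+\delta_n)\varepsilon_n)$ shrinks as $n\to\infty$, so for $n$ large it lies inside $U(x)$ and, by the inclusion $B^d(x,\tilde r\varepsilon)\subseteq\delta^x_\varepsilon B^d(x,\tilde r)$ of (A0), the expansion $f_{n,r}=\delta^x_{\varepsilon_n^{-1}}$ is genuinely defined there and carries it into $B^{d^x}(x,r+2\delta_n)$, the domain of $g_{n,r}$; this also legitimizes the composition in (3). That $(U(x),d^x)$ is a bona fide, boundedly compact pointed quasimetric space, with the same constants $c_{\mathbb X},Q_{\mathbb X}$ as $d$ (so that the uniform bound on $Q$ needed for uniqueness in Theorem \ref{km_teor} is automatic), is furnished by Remark \ref{top} and Proposition \ref{d_x}. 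Finally, since (A3) produces the \emph{same} function $d^x$ for every sequence $\varepsilon_n\to 0$, the limit is independent of the choice of $\lambda_n\to\infty$, as Definition \ref{cone_def} demands.

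The step I expect to be the main obstacle is not any single estimate but reconciling the bounded neighborhood $(U(x),d^x)$ with the ``for each $r>0$'' clause of Definition \ref{km_conv_nc}, which nominally probes the unbounded cone: the honest object produced by the dilations is only a neighborhood of the basepoint. I would resolve this through Remark \ref{cone_rem} (the tangent cone is determined by, and recovered from, any neighborhood of $p$) together with the uniqueness up to isometry of Theorem \ref{km_teor}; these let me pass from the local convergence of $r$-balls for $r$ in the admissible range to the conclusion that $(U(x),x,d^x)$ is a local tangent cone.
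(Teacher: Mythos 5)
The paper itself does not prove Theorem \ref{exist_teor} --- it is quoted from \cite{smj} --- so your argument has to stand on its own. Its local mechanism is sound and is surely the core of any proof: with $g_{n,r}=\delta^x_{\varepsilon_n}$ and $f_{n,r}=\delta^x_{\varepsilon_n^{-1}}$, conditions 1) and 3) of Definition \ref{km_conv_nc} follow from (A1) and (A2), and the distortion condition 2) is exactly the uniform convergence in (A3). The genuine gap is the point you yourself flag as ``the main obstacle'', and your proposed resolution of it does not work. By Definition \ref{cone_def}, the claim that $(U(x),x,d^x)$ is a \emph{local} tangent cone means: the limit $\lim\limits_{\lambda\to\infty}(\lambda\mathbb X,x)=(T_x\mathbb X,e)$ exists as a single complete pointed quasimetric space, with connecting maps required for \emph{every} $r>0$, and $(U(x),d^x)$ is isometric to a neighborhood of $e$ in it. The space $(U(x),d^x)$ cannot itself serve as that limit: for $r$ beyond (roughly) $R$ the domain bookkeeping of (A0) breaks down (no $\tilde r\geq r+\delta_n$ with $\bar B^d(x,\tilde r)\subseteq U(x)$ is guaranteed); the uniformity of (A3), which you invoke to choose $\delta_n$ independently of $r$, is only asserted on $\bar B^d(x,R)$; and, most bluntly, a map of an arbitrarily large ball of $(\mathbb X,\lambda_n d)$ into the bounded set $(U(x),d^x)$ with distortion $<\delta_n$ cannot exist at all (think of $\mathbb X=\mathbb R^N$ with the usual dilations and a bounded $U(x)$: there $T_x\mathbb X=\mathbb R^N$, while $U(x)$ is only a neighborhood of the basepoint). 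Your repair --- Remark \ref{cone_rem} together with Theorem \ref{km_teor} --- is circular: Theorem \ref{km_teor} gives uniqueness of limits that are assumed to exist, and Remark \ref{cone_rem} lists properties of the tangent cone once it exists; neither can produce existence, which is precisely what is to be proved.

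What is missing, and what any complete proof must contain, is the construction of a global candidate limit space. This is where the cone property (Proposition \ref{cone_prop}) is indispensable: since each $\delta^x_\mu$ is a $d^x$-homothety with factor $\mu$, the rescaled balls $\bigl(B^{d^x}(x,\rho),\mu^{-1}d^x\bigr)$ are isometrically embedded into one another by the dilations, and their direct limit (increasing union) $T$ as $\mu\to 0$ is an unbounded, scale-invariant, complete, boundedly compact quasimetric space; Remark \ref{top} and Proposition \ref{d_x} supply the needed metric facts, and Proposition \ref{cone_prop} shows that $u\mapsto[\delta^x_\mu u]$ embeds $(U(x),d^x)$ isometrically onto a neighborhood of $e$ in $T$. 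Only then do your maps enter: for each fixed $r>0$ and all $n$ large enough that $(r+\delta_n)\varepsilon_n<R$, the compositions of $\delta^x_{\varepsilon_n^{\pm 1}}$ with these identifications verify 1)--3) of Definition \ref{km_conv_nc} with target $(T,e)$, by exactly your distortion computation. Without this step your argument establishes only convergence of small balls, which is weaker than what Definition \ref{cone_def} demands.
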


Note that on the neighborhood $U(x)\subseteq \mathbb X$ we have
two (quasi)metric structures $d$ and $d^x$, thus it is natural to
denote the local tangent cone to $\mathbb X$ at $x$ as
$(U(x),d^x)$, not introducing any other underlying set for the
tangent cone.

 One of the main goals of
the present paper is to describe the algebraic properties of the
(local) tangent cone in the case when $({\mathbb X},d,\delta)$ is
a strong uniform nondegenerate dilation structure. Having only
axioms (A0)~--- (A3) we can say nothing substantial about this.

\section{Algebraic properties of the tangent cone}

\begin{Definition}[\rm\cite{pon,gol}]\label{loc_gr} A {\it local group}
is a tuple $({\mathcal G},e,i,p)$ where ${\mathcal G}$ is a
Hausdorff topological space with a fixed {\it identity element}
$e\in {\mathcal G}$ and continuous functions $i:\Upsilon\to
{\mathcal G}$ ({\it the inverse element function}), and
$p:\Omega\to {\mathcal G}$ ({\it the product function}) given on
some subsets $\Upsilon\subseteq {\mathcal G}$, $\Omega\subseteq
{\mathcal G}\times {\mathcal G}$ such that $e\in\Upsilon$,
$\{e\}\times {\mathcal G}\subseteq\Omega$, ${\mathcal
G}\times\{e\}\subseteq\Omega$, and for all $x,y,z\in {\mathcal G}$
the following properties hold:

1) $p(e,x)=p(x,e)=x$;

2) if $x\in\Upsilon$, then $(x,i(x))\in\Omega$,
$(i(x),x)\in\Omega$ and $p(x,i(x))=p(i(x),x)=e$;

3) if $(x,y),(y,z)\in\Omega$ and $(p(x,y),z),(x,p(y,z))\in\Omega$,
then $p(p(x,y),z)=p(x,p(y,z))$.

\end{Definition}

Assertions close to the next three  propositions can be found in
 \cite{bulf}, but in our consideration, some details are
 different. We include the proofs for the reader's convenience.

\begin{Proposition}\label{a4_loc_gr}
Let $({\mathbb X},d,\delta)$ be a strong dilation structure. Then
the function introduced in Axiom $(A4)$ yields a product and an
inverse element functions in a neighborhood of the given point.
Precisely, ${\mathcal G}^x=(U(x),x,\operatorname{inv}^x,\Sigma^x)$
$($where $\operatorname{inv}^x,\Sigma^x$ are from Proposition
$\ref{sig_inv}$$)$ is a local group. For the inverse element, the
following property holds:
$\operatorname{inv}^x(\operatorname{inv}^x(u))=u$.
\end{Proposition}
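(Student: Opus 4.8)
The plan is to verify, one axiom at a time, the local-group conditions of Definition \ref{loc_gr} for the tuple $\mathcal{G}^x=(U(x),x,\operatorname{inv}^x,\Sigma^x)$ with identity element $x$, passing in each case to the $\varepsilon\to 0$ limit from the approximate operations $\Sigma_\varepsilon^x,\operatorname{inv}_\varepsilon^x$ of Proposition \ref{sig_inv}. Continuity of $\Sigma^x$ and $\operatorname{inv}^x$ (required of the product $p$ and inverse $i$) follows because, by Proposition \ref{sig_inv} and Axiom (A4), they are uniform limits of the maps $\Sigma_\varepsilon^x,\operatorname{inv}_\varepsilon^x$, which are themselves continuous as compositions of dilations (homeomorphisms by (A0)); a uniform limit of continuous maps is continuous. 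The domains on which the products appearing in the axioms are defined are controlled by Proposition \ref{obl_opr}: after shrinking $r$ I may assume that $\Sigma^x$ maps $B^d(x,r)\times B^d(x,r)$ into $U(x)$, that $\operatorname{inv}^x$ maps $B^d(x,r)$ into a small ball about $x$ (note $\operatorname{inv}_\varepsilon^x(x)=\delta_{\varepsilon^{-1}}^x x=x$), and that the finitely many nested products occurring below are all defined. This is the routine bookkeeping that keeps the assertion a \emph{local} group statement.

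Two of the axioms reduce to finite-$\varepsilon$ computations. For the left identity, $\Sigma_\varepsilon^x(x,v)=\delta_{\varepsilon^{-1}}^x\delta_\varepsilon^{\delta_\varepsilon^x x}v=\delta_{\varepsilon^{-1}}^x\delta_\varepsilon^x v=v$ exactly, using $\delta_\varepsilon^x x=x$ and $\delta_{\varepsilon^{-1}}^x\delta_\varepsilon^x=\delta_1^x=\operatorname{id}$ from (A1), (A2); letting $\varepsilon\to 0$ gives $\Sigma^x(x,v)=v$. For a left inverse I would use relation \eqref{del_sig}, $\Lambda^x(u,v)=\Sigma^x(\operatorname{inv}^x u,v)$, to write $\Sigma^x(\operatorname{inv}^x u,u)=\Lambda^x(u,u)$, and then note that $\Lambda_\varepsilon^x(u,u)=\delta_{\varepsilon^{-1}}^{\delta_\varepsilon^x u}\delta_\varepsilon^x u=\delta_\varepsilon^x u$ since a dilation fixes its own center; hence $\Lambda^x(u,u)=\lim_{\varepsilon\to 0}\delta_\varepsilon^x u=x$ by (A1). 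Thus $\operatorname{inv}^x u$ is a left inverse of $u$.

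The associativity axiom is the main obstacle, and the only place where the \emph{emergent} nature of $\Sigma^x$ really matters: the approximate operations $\Sigma_\varepsilon^x$ are \emph{not} associative for fixed $\varepsilon$ (already visible in the Euclidean model, where $\Sigma_\varepsilon^x(u,x)=x+(1-\varepsilon)(u-x)\neq u$), so associativity can hold only in the limit. I would expand both iterated products $\Sigma_\varepsilon^x(\Sigma_\varepsilon^x(u,v),w)$ and $\Sigma_\varepsilon^x(u,\Sigma_\varepsilon^x(v,w))$ by collecting dilations via (A2), and then convert the resulting compositions into $\Lambda$-operations based at the moving points $\delta_\varepsilon^x(\cdot)$ using the identities of Proposition \ref{sig_inv} (namely $\Sigma_\varepsilon^x(u,v)=\Lambda_\varepsilon^{\delta_\varepsilon^x u}(\operatorname{inv}_\varepsilon^x u,v)$ and its companion). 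Since $\delta_\varepsilon^x(\cdot)\to x$, the uniformity of the limits in the basepoint over a compact neighborhood — precisely the uniform-in-$x$ hypothesis of (A4) and Proposition \ref{sig_inv} — lets me replace the moving basepoints by $x$ in the limit, while the discrepancy between the two triple products is estimated in the quasimetric $d^x$ by the cone property (Proposition \ref{cone_prop}) and the defining limit (A3), and shown to tend to $0$. This uniform-in-basepoint control is what I expect to be delicate, and it is the reason (A4) is assumed uniform in $x$.

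Finally, with associativity, the (exact) left identity, and the left inverse in hand, I would invoke the classical semigroup argument: in an associative structure with a left identity in which every element has a left inverse, the left identity is two-sided and left inverses are two-sided. Concretely, writing $a'=\operatorname{inv}^x a$, $a''=\operatorname{inv}^x a'$, one gets $a a'=(a''a')(aa')=a''((a'a)a')=a''a'=x$, and then $a x=a(a'a)=(aa')a=a$, with all intermediate products lying in the admissible domain once $r$ is small. This yields the right identity $\Sigma^x(u,x)=u$ and the right inverse $\Sigma^x(u,\operatorname{inv}^x u)=x$, completing axioms (1)--(2) of Definition \ref{loc_gr}. The involution $\operatorname{inv}^x(\operatorname{inv}^x u)=u$ then follows from uniqueness of inverses in a local group: both $u$ and $\operatorname{inv}^x(\operatorname{inv}^x u)$ are two-sided inverses of $\operatorname{inv}^x u$, hence equal.
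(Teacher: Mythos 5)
Your proposal is correct, and its core coincides with the paper's own argument: both rest on exact finite-$\varepsilon$ identities for $\Sigma^x_\varepsilon$ and $\operatorname{inv}^x_\varepsilon$ coming from (A1), (A2), followed by a passage to the limit in which the uniformity in (A4) is used to replace the moving basepoints $\delta^x_\varepsilon(\cdot)\to x$ by $x$. In particular, your treatment of associativity is exactly the paper's: the paper's key step is the exact mixed-basepoint identity $\Sigma^x_\varepsilon\bigl(u,\Sigma^{\delta^x_\varepsilon u}_\varepsilon(v,w)\bigr)=\Sigma^x_\varepsilon\bigl(\Sigma^x_\varepsilon(u,v),w\bigr)$, which is precisely what your plan of ``converting to $\Lambda$-operations at moving points and using uniformity'' produces once written out; your Euclidean counterexample showing that same-basepoint associativity genuinely fails at finite $\varepsilon$ is a nice observation the paper leaves implicit. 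Where you diverge is after associativity: the paper obtains the right identity, both inverses, and the involution directly from further exact identities, namely $\Sigma^x_\varepsilon(u,\delta^x_\varepsilon u)=u$, $\Sigma^x_\varepsilon(u,\operatorname{inv}^x_\varepsilon u)=x$, $\Sigma^{\delta^x_\varepsilon u}_\varepsilon(\operatorname{inv}^x_\varepsilon u,u)=\delta^x_\varepsilon u$ and $\operatorname{inv}^{\delta^x_\varepsilon u}_\varepsilon\operatorname{inv}^x_\varepsilon u$, each passed to the limit; you instead prove only the left-sided versions analytically (your left-inverse derivation via $\Lambda^x(u,u)=\lim_{\varepsilon\to 0}\delta^x_\varepsilon u=x$ and \eqref{del_sig} is correct) and then recover the right-sided axioms and $\operatorname{inv}^x(\operatorname{inv}^x u)=u$ by the classical semigroup argument and uniqueness of inverses. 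Both routes are valid. The paper's is shorter and stays with products of at most two factors, whereas your algebraic route manipulates four-factor products inside a local group, so the domain bookkeeping you invoke (shrinking $r$ so that all intermediate products are defined, using continuity of $\Sigma^x$ and $\Sigma^x(x,x)=x$) is not a formality but a genuine obligation --- exactly the pitfall the paper's remark on global associativity warns about; since only finitely many fixed-length products occur, your argument does go through. In exchange, your version needs fewer exact identities and separates more cleanly what is analytic (the limits) from what is purely algebraic.
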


\begin{proof} Let $u,v,w\in
B^d(x,r)$, $\varepsilon\leq\varepsilon_0$, where $r$ is from Axiom
$(A4)$, and $\varepsilon_0$ is such that
$\Sigma_{\varepsilon}^x(u,v)$ is defined for all
$\varepsilon\leq\varepsilon_0$, for example, as in Proposition
\ref{obl_opr}. By direct calculation and using the uniformity of
the limit in (A4) one can verify the following relations:
$$\Sigma^x_{\varepsilon}(x,u)=u;\ \Sigma^{x}_{\varepsilon}(u,\delta^x_{\varepsilon}u)=u;$$
if both parts of the following equality are defined, then
$$\Sigma^x_{\varepsilon}(u,\Sigma^{\delta^x_{\varepsilon}}_{\varepsilon}(v,w))
=\Sigma^x_{\varepsilon}(\Sigma^x_{\varepsilon}(u,v),w);$$
$$\Sigma^x(u,\text{inv}_{\varepsilon}^x(u))=x;\
\Sigma^{\delta^x_{\varepsilon}u}(\text{inv}_{\varepsilon}^x(u),u)=\delta^x_{\varepsilon}u;$$
$$\text{inv}_{\varepsilon}^{\delta^x_{\varepsilon}u}\text{inv}^x_{\varepsilon}u=x.$$
Passing to the limit when $\varepsilon\to 0$, we obtain that
$\Sigma^x(u,v)$ is the product function w. r. t. the identity
element $x$ and inverse function $\text{inv}^x(u)$ such that
$\operatorname{inv}^x(\operatorname{inv}^x(u))=u$. The domains of
the product and inverse functions are some areas $\Omega\supseteq
B^d(x,r)\times B^d(x,r)$, $\Upsilon\supseteq B^d(x,r)$ where $r$
is from (A4). The continuity of functions $\Sigma^x(u,v)$ and
$\operatorname{inv}^xu$ is obvious from (A0), (A4) and Proposition
\ref{sig_inv}.
\end{proof}

\begin{Proposition} \label{dil_avt} The following identities
$$\delta_{\mu}^x\Sigma^x(u,v)=\Sigma^x(\delta_{\mu}^x(u),\delta_{\mu}^x(v)),\
\operatorname{inv}^x(\delta_{\mu}^xu)=\delta_{\mu}^x\operatorname{inv}^xu$$
hold provided both parts of the equality are defined $($in
particular, when $\Sigma^x(u,v)$ exists and $\mu\leq 1$$)$.
\end{Proposition}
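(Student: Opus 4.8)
The plan is to prove both identities at the level of the finite-$\varepsilon$ approximations $\Sigma_\varepsilon^x$ and $\operatorname{inv}_\varepsilon^x$ and then pass to the limit $\varepsilon\to 0$, exactly in the spirit of Proposition \ref{a4_loc_gr}: the only axiom needed for the algebra is the semigroup law (A2), and the analysis (uniformity) enters only when we let $\varepsilon\to 0$. Throughout I would assume $\mu\le 1$, that $\Sigma^x(u,v)$ is defined, and that $u,v$ lie in a ball small enough that every combination below stays in $U(x)$ (this is legitimate by the proviso of the statement and by (A0), since $\delta_\mu^x$ contracts $B^d(x,r)$ into itself). For $\varepsilon$ small one has $\varepsilon\mu\le 1$, so $\delta_{\varepsilon\mu}^x=\delta_\varepsilon^x\delta_\mu^x$ and $\delta_\mu^x\delta_{(\varepsilon\mu)^{-1}}^x=\delta_{\varepsilon^{-1}}^x$ by (A2).

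First I would carry out a purely algebraic reduction. Writing out the definitions and using (A2), one checks the two \emph{exact} identities
\begin{equation}\label{dav1}
\Sigma_\varepsilon^x(\delta_\mu^x u,\delta_\mu^x v)=\delta_{\varepsilon^{-1}}^x\,\delta_\varepsilon^{w_\varepsilon}\,\delta_\mu^x v,\qquad \delta_\mu^x\,\Sigma_{\varepsilon\mu}^x(u,v)=\delta_{\varepsilon^{-1}}^x\,\delta_\varepsilon^{w_\varepsilon}\,\delta_\mu^{w_\varepsilon} v,
\end{equation}
where $w_\varepsilon=\delta_{\varepsilon\mu}^x u=\delta_\varepsilon^x(\delta_\mu^x u)$; in the second identity I have used (A2) at the base point $w_\varepsilon$ to split $\delta_{\varepsilon\mu}^{w_\varepsilon}=\delta_\varepsilon^{w_\varepsilon}\delta_\mu^{w_\varepsilon}$. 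As $\varepsilon\to 0$ the left-hand sides converge, respectively, to $\Sigma^x(\delta_\mu^x u,\delta_\mu^x v)$ and (since $\varepsilon\mu\to 0$ and $\delta_\mu^x$ is a homeomorphism) to $\delta_\mu^x\Sigma^x(u,v)$. Thus the first product identity follows once I show that the two right-hand sides of \eqref{dav1} have the same limit. These differ only in their innermost argument, $\delta_\mu^x v$ versus $\delta_\mu^{w_\varepsilon} v$, to both of which the \emph{same} map $\Phi_\varepsilon=\delta_{\varepsilon^{-1}}^x\delta_\varepsilon^{w_\varepsilon}$ is applied, and $w_\varepsilon\to x$ by (A1).

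The main obstacle is exactly this moving-base-point discrepancy, and I expect to resolve it through the uniformity of the convergence in (A3) over the base point in a compact neighborhood of $x$. The key point is that $\Phi_\varepsilon$ is asymptotically an isometry: by the cone property (Proposition \ref{cone_prop}) applied at the base $w_\varepsilon$ one has $d^{w_\varepsilon}(\delta_\varepsilon^{w_\varepsilon} a,\delta_\varepsilon^{w_\varepsilon} b)=\varepsilon\,d^{w_\varepsilon}(a,b)$, while applying $\delta_{\varepsilon^{-1}}^x$ rescales $d^x$-distances by $\varepsilon^{-1}$. Since $w_\varepsilon\to x$, the uniform form of (A3) should let me compare $d$, $d^x$ and $d^{w_\varepsilon}$ for points near $x$, so that $d^{w_\varepsilon}(\delta_\mu^x v,\delta_\mu^{w_\varepsilon} v)\to 0$ forces the $d$-distance between $\delta_\varepsilon^{w_\varepsilon}\delta_\mu^x v$ and $\delta_\varepsilon^{w_\varepsilon}\delta_\mu^{w_\varepsilon} v$ to be $o(\varepsilon)$, whence $d(\Phi_\varepsilon\delta_\mu^x v,\ \Phi_\varepsilon\delta_\mu^{w_\varepsilon} v)\to 0$ and the two right-hand sides share a common limit. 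The genuinely delicate ingredient here is the continuity of the dilation in its base point, $\delta_\mu^{w_\varepsilon} v\to\delta_\mu^x v$, which I would extract from the uniformity hypotheses in (A3)--(A4) rather than assume outright.

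Finally, the inverse identity is a formal consequence of the product identity just proved together with the local group structure of $\mathcal G^x=(U(x),x,\operatorname{inv}^x,\Sigma^x)$ from Proposition \ref{a4_loc_gr}. Since $\delta_\mu^x x=x$ by (A1), the homomorphism property and the group relation $\Sigma^x(u,\operatorname{inv}^x u)=x$ give
$$\Sigma^x(\delta_\mu^x u,\ \delta_\mu^x\operatorname{inv}^x u)=\delta_\mu^x\Sigma^x(u,\operatorname{inv}^x u)=\delta_\mu^x x=x.$$
Hence $\delta_\mu^x\operatorname{inv}^x u$ is a right inverse of $\delta_\mu^x u$ in $\mathcal G^x$, and by uniqueness of inverses near the identity of a local group it must coincide with $\operatorname{inv}^x(\delta_\mu^x u)$, which is precisely the second identity.
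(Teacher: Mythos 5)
Your exact identities and the reduction of the product law to ``the two right-hand sides have a common limit'' are correct, and your architecture is genuinely different from the paper's. The paper never touches $\Sigma_\varepsilon$ directly: using (A2) it establishes the exact identity $\Lambda^x_{\varepsilon}(\delta_{\mu}^xu,\delta_{\mu}^xv)=\delta_{\mu}^{\delta_{\varepsilon\mu}^xu}\Lambda^x_{\varepsilon\mu}(u,v)$ for the combination $\Lambda$ of Axiom (A4), passes to the limit to get $\delta_{\mu}^x\Lambda^x(u,v)=\Lambda^x(\delta^x_{\mu}u,\delta_{\mu}^xv)$, i.e.\ \eqref{del_dil}, reads off the inverse identity as the special case $v=x$ (since $\operatorname{inv}^x(u)=\Lambda^x(u,x)$ and $\delta_\mu^xx=x$), and then obtains the $\Sigma$-identity formally from \eqref{del_dil}, \eqref{del_sig}, the inverse identity and $\operatorname{inv}^x\circ\operatorname{inv}^x=\operatorname{id}$. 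Your derivation of the inverse identity \emph{from} the product identity, via uniqueness of inverses in the local group $\mathcal G^x$, is a sound alternative: the instance of associativity it needs is covered by Definition \ref{loc_gr}, because the intermediate products involved ($\Sigma^x(\operatorname{inv}^x(\delta_\mu^xu),\delta_\mu^xu)=x$ and products with the identity) are all defined.

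The weak point is the limit passage, and here your choice of $\Sigma$ over $\Lambda$ costs you. In the paper's identity the moving base point occurs exactly once, in the outermost dilation $\delta_\mu^{\delta^x_{\varepsilon\mu}u}$ applied to a convergent sequence; in yours the discrepancy $\delta_\mu^xv$ versus $\delta_\mu^{w_\varepsilon}v$ sits innermost and must be pushed through $\Phi_\varepsilon=\delta^x_{\varepsilon^{-1}}\delta^{w_\varepsilon}_\varepsilon$, which is why you need the asymptotic-isometry argument at all. That argument is workable, but it invokes uniformity of (A3) in the base point (and a uniform comparison of $d^{w_\varepsilon}$ with $d$), i.e.\ the hypothesis of a \emph{uniform} dilation structure, which Proposition \ref{dil_avt} does not assume --- the paper's route uses no metric estimate whatsoever. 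More seriously, the ingredient you yourself flag as delicate, $\delta_\mu^{w_\varepsilon}v\to\delta_\mu^xv$, is promised (``extracted from (A3)--(A4)'') but never delivered, and it is doubtful it can be delivered as stated: the axioms assume continuity of dilations in $\varepsilon$ only, and the uniformity clauses in (A3)/(A4) concern convergence of the rescaled quasimetrics and of $\Lambda_\varepsilon$, not continuity of a fixed-scale dilation $\delta_\mu^{y}$ as its base point $y$ varies. This is the one genuine gap in your write-up. To be fair, it is precisely the step the paper buries in the word ``hence'': passing from its exact $\Lambda_\varepsilon$-identity to \eqref{del_dil} equally requires $\delta_\mu^{\delta^x_{\varepsilon\mu}u}\bigl(\Lambda^x_{\varepsilon\mu}(u,v)\bigr)\to\delta_\mu^x\bigl(\Lambda^x(u,v)\bigr)$, i.e.\ base-point continuity again. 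Switching to the $\Lambda$-identity would let you drop the $\Phi_\varepsilon$ machinery and the unwarranted uniformity hypothesis and would match the paper's hypotheses exactly; the base-point continuity issue, however, would remain open for you just as it silently does for the paper.
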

\begin{proof}
For the function $$\Lambda^x=\lim\limits_{\varepsilon\to 0}
\Lambda_{\varepsilon}^x(u,v)= \lim\limits_{\varepsilon\to 0}
\delta_{\varepsilon^{-1}}^{\delta_{\varepsilon}^xu}\delta_{\varepsilon}^{x}v$$
from Axiom (A4), direct calculations show that
$\Lambda^x_{\varepsilon}(\delta_{\mu}^xu,\delta_{\mu}^xv)=
\delta_{\mu}^{\delta_{\varepsilon\mu}^xu}\Lambda^x_{\varepsilon\mu}(u,v),$
hence
\begin{equation}\label{del_dil}\delta_{\mu}^x\Lambda^x(u,v)=
\Lambda^x(\delta^x_{\mu}u,\delta_{\mu}^xv),\end{equation} provided
both parts of the last equality are defined. From here the second
equality of the proposition is obvious, since
$\operatorname{inv}^x(u)=\Lambda^x(u,x)$.

The first equality of the proposition follows from
\eqref{del_dil}, \eqref{del_sig} and from the second equality.
\end{proof}

\begin{Proposition}\label{isom} Let $({\mathbb X},d,\delta)$
be a strong nondegenerate uniform dilation structure. Then for all
$u\in B^d(x,r)$ the function $\Sigma^x(u,\cdot)$ $($see
Proposition $\ref{sig_inv}$$)$ is a $d^x$-isometry on $B^d(x,r)$.
\end{Proposition}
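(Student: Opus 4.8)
The plan is to establish the defining identity of a $d^x$-isometry, namely
$$d^x\bigl(\Sigma^x(u,v),\Sigma^x(u,w)\bigr)=d^x(v,w)\qquad\text{for all }v,w\in B^d(x,r),$$
by producing a single family of real numbers $F(\varepsilon)$ that can be read in two different ways, each of which tends, as $\varepsilon\to0$, to one side of the equality. This detour is essentially forced by the quasimetric setting: since the generalized triangle inequality carries the multiplicative constant $Q_{\mathbb X}$, one cannot compare $\Sigma^x_\varepsilon(u,v)$ with its limit $\Sigma^x(u,v)$ inside $d$ by the naive triangle estimate that would settle the matter for metric spaces. Injectivity of the map on $B^d(x,r)$ is then immediate from nondegeneracy, and a two-sided inverse is $\Sigma^x(\operatorname{inv}^x u,\cdot)$ by the local group structure of Proposition \ref{a4_loc_gr}, so it suffices to prove the displayed distance-preservation identity.

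First I would record the exact algebraic identity coming from (A1), (A2): applying $\delta^x_\varepsilon$ to $\Sigma^x_\varepsilon(u,v)=\delta^x_{\varepsilon^{-1}}\delta^{\delta^x_\varepsilon u}_\varepsilon v$ and using $\delta^x_\varepsilon\delta^x_{\varepsilon^{-1}}=\delta^x_1=\mathrm{id}$ gives
$$\delta^x_\varepsilon\Sigma^x_\varepsilon(u,v)=\delta^{u_\varepsilon}_\varepsilon v,\qquad u_\varepsilon:=\delta^x_\varepsilon u .$$
Consequently the quantity
$$F(\varepsilon):=\frac1\varepsilon\,d\bigl(\delta^x_\varepsilon\Sigma^x_\varepsilon(u,v),\,\delta^x_\varepsilon\Sigma^x_\varepsilon(u,w)\bigr)=\frac1\varepsilon\,d\bigl(\delta^{u_\varepsilon}_\varepsilon v,\,\delta^{u_\varepsilon}_\varepsilon w\bigr)$$
admits two readings with fixed base point $x$ and with moving base point $u_\varepsilon$, respectively.

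Then I would pass to the limit in each reading. For the left-hand reading I use $\Sigma^x_\varepsilon(u,v)\to\Sigma^x(u,v)$, $\Sigma^x_\varepsilon(u,w)\to\Sigma^x(u,w)$ (Proposition \ref{sig_inv}); since the limit in (A3) is uniform in the point arguments, one has $\bigl|\tfrac1\varepsilon d(\delta^x_\varepsilon P_\varepsilon,\delta^x_\varepsilon Q_\varepsilon)-d^x(P_\varepsilon,Q_\varepsilon)\bigr|\to0$ for any $P_\varepsilon\to P$, $Q_\varepsilon\to Q$ in $B^d(x,R)$, while $d^x(P_\varepsilon,Q_\varepsilon)\to d^x(P,Q)$ by continuity of $d^x$ (Remark \ref{top}); taking $P_\varepsilon=\Sigma^x_\varepsilon(u,v)$, $Q_\varepsilon=\Sigma^x_\varepsilon(u,w)$ yields $F(\varepsilon)\to d^x(\Sigma^x(u,v),\Sigma^x(u,w))$. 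For the right-hand reading I note $u_\varepsilon=\delta^x_\varepsilon u\to x$ by (A1) and invoke the uniformity of (A3) in the base point — this is exactly where the hypothesis that the structure be \emph{uniform} enters: $\bigl|\tfrac1\varepsilon d(\delta^{u_\varepsilon}_\varepsilon v,\delta^{u_\varepsilon}_\varepsilon w)-d^{u_\varepsilon}(v,w)\bigr|\to0$, whereas $d^{u_\varepsilon}(v,w)\to d^x(v,w)$ by continuity of $y\mapsto d^y(v,w)$; hence $F(\varepsilon)\to d^x(v,w)$. Equating the two limits of the one family $F(\varepsilon)$ gives the claim.

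The main obstacle is the right-hand reading, i.e. justifying $d^{u_\varepsilon}(v,w)\to d^x(v,w)$ as the base point $u_\varepsilon$ varies. In a metric space one would simply compare $\delta^x_\varepsilon\Sigma^x(u,v)$ with $\delta^{u_\varepsilon}_\varepsilon v$ directly, but in a quasimetric this comparison loses a factor $Q_{\mathbb X}$ and collapses; the present argument avoids it entirely by never departing from the exact identity $\delta^x_\varepsilon\Sigma^x_\varepsilon(u,v)=\delta^{u_\varepsilon}_\varepsilon v$. The continuity of $y\mapsto d^y(v,w)$ I would get from the observation that $d^y(v,w)$ is the uniform (in $y$) limit of the functions $y\mapsto\tfrac1\varepsilon d(\delta^y_\varepsilon v,\delta^y_\varepsilon w)$, each continuous in $y$ by continuity of the dilations in the base point and of $d$, so that the uniform limit is continuous. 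Finally I would check that for small $\varepsilon$ all points involved remain inside the balls on which (A3) and (A4) supply their uniform limits; this holds because $\Sigma^x_\varepsilon(u,v)\to\Sigma^x(u,v)\in B^d(x,R)$ and $u_\varepsilon\to x$.
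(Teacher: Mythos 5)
Your proof is correct, and it is essentially the mirror image of the paper's: both arguments hinge on reading a single quantity of the form $\frac{1}{\varepsilon}d(\cdot,\cdot)$ at two different base points and identifying the two limits via the cone property (Proposition \ref{cone_prop}), uniformity in (A3), and (A4). The paper dilates $v,w$ from $x$: it takes $\frac{1}{\varepsilon}d(\delta^x_\varepsilon v,\delta^x_\varepsilon w)$, rewrites $\delta^x_\varepsilon v=\delta^{u_\varepsilon}_\varepsilon\Lambda^x_\varepsilon(u,v)$ with $u_\varepsilon=\delta^x_\varepsilon u$, concludes first that $\Lambda^x(u,\cdot)$ is a $d^x$-isometry, and only then transfers this to $\Sigma^x(u,\cdot)$ through the local-group identity $\Lambda^x(u,\Sigma^x(u,v))=v$ (via \eqref{del_sig} and Proposition \ref{a4_loc_gr}). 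You instead dilate from $u_\varepsilon$, using the identity $\delta^x_\varepsilon\Sigma^x_\varepsilon(u,v)=\delta^{u_\varepsilon}_\varepsilon v$, so your two readings produce $d^x(\Sigma^x(u,v),\Sigma^x(u,w))$ and $d^x(v,w)$ directly, with no final algebraic step; the price is that you need the convergence $\Sigma^x_\varepsilon\to\Sigma^x$ of Proposition \ref{sig_inv} rather than \eqref{lam} itself --- an even trade. Your split also decouples the two continuity requirements: your $x$-reading uses only continuity of $d^x$ in its arguments (Remark \ref{top}), and your $u_\varepsilon$-reading only continuity of $y\mapsto d^y(v,w)$ with $v,w$ fixed, whereas the paper's passage $d^{\delta^x_\varepsilon u}\bigl(\Lambda^x_\varepsilon(u,v),\Lambda^x_\varepsilon(u,w)\bigr)\to d^x\bigl(\Lambda^x(u,v),\Lambda^x(u,w)\bigr)$ needs continuity of $d^{(\cdot)}(\cdot,\cdot)$ in all three variables at once. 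One caveat applies to both proofs equally: continuity of $d^y$ in the base point $y$ is not secured by the stated axioms. Your justification appeals to continuity of dilations with respect to the base point, which (A0) does not assert (it gives continuity only in $\varepsilon$ and in the argument of each homeomorphism), and the paper's proof silently relies on the same unstated regularity; so this is a shared implicit assumption rather than a defect of your route, but if you want your write-up to be self-contained you should either add base-point continuity of $\delta^y_\varepsilon$ as a hypothesis or isolate the continuity of $y\mapsto d^y$ as a separate lemma.
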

\begin{proof}
Using Proposition \ref{cone_prop} and uniformity in Axiom $(A3)$,
we get $$\lim\limits_{\varepsilon\to 0}
 \frac{1}{\varepsilon}\mid
 d(\delta_{\varepsilon}^xv,\delta_{\varepsilon}^xw)-
d^{\delta_{\varepsilon}^xu}(\delta_{\varepsilon}^xv,\delta_{\varepsilon}^xw)\mid=
\lim\limits_{\varepsilon\to 0}
 \mid
 \frac{1}{\varepsilon}d(\delta_{\varepsilon}^xv,\delta_{\varepsilon}^xw)-
d^{\delta_{\varepsilon}^xu}(\delta_{\varepsilon^{-1}}^{\delta_{\varepsilon}^xu}
\delta_{\varepsilon}^xv,\delta_{\varepsilon^{-1}}^{\delta_{\varepsilon}^xu}
\delta_{\varepsilon}^xw)\mid=$$ $$=\mid
d^x(v,w)-d^x(\Lambda^x(u,v),\Lambda^x(u,w))\mid=0,$$ where
$\Lambda^x$ is from Axiom (A4). Further, we have
$$d^x(v,w)=d^x(\Lambda^x(u,\Sigma^x(u,v)),\Lambda^x(u,\Sigma^x(u,w)))=d^x(\Sigma^x(u,v),
\Sigma^x(u,w)).$$ From here the assertion follows.
\end{proof}

It is interesting to compare the following proposition with the
definition and properties of homogeneous norm on a homogeneous Lie
group \cite{fs}.

\begin{Proposition}\label{odnor} Let
$({\mathbb X},d,\delta)$ be a strong nondegenerate dilation
structure. Then the function $|\cdot|:B^d(x,R)\to\mathbb R$,
defined as $|u|=d^x(x,u)$, meets the following properties:

$1)$ homogeneity$:$ if $u\in B^d(x,R)$ and $\delta_r^xu\in
B^d(x,R)$ is defined then $|\delta_r^xu|=r|u|$$;$

$2)$ non-degeneracy: $u=x$ if and only if $|u|=0.$

$3)$ generalized triangle inequality$:$ if for $u,v\in B^d(x,R)$
the value $\Sigma^x(u,v)\in B^d(x,R)$ is defined then the
following inequality holds:
\begin{equation}\label{treug}
|\Sigma^x(u,v)|\leq c\left(|u|+|v|\right), \end{equation} where
$1\leq c<\infty$ and $c=c(x)$ does not depend on $u,v$.
\end{Proposition}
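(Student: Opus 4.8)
The plan is to treat the three properties separately, the first two being immediate consequences of the cone property and non-degeneracy, and to concentrate the work on the generalized triangle inequality.

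For homogeneity (property 1), I would start from $|\delta_r^x u| = d^x(x,\delta_r^x u)$ and use that $\delta_r^x x = x$ by (A1), so that $d^x(x,\delta_r^x u) = d^x(\delta_r^x x, \delta_r^x u)$; applying the cone property of Proposition \ref{cone_prop} with $\mu = r$, the right-hand side equals $r\,d^x(x,u) = r|u|$. For non-degeneracy (property 2), since $d^x$ is a quasimetric by Proposition \ref{d_x} we have $d^x(x,x)=0$, hence $|x|=0$; conversely $|u| = d^x(x,u) = 0$ forces $u=x$ by the non-degeneracy of the dilation structure.

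The substance is property 3. The idea is to exploit that in the local group ${\mathcal G}^x$ (Proposition \ref{a4_loc_gr}) the base point $x$ is a two-sided identity, so $\Sigma^x(u,x)=u$, and that the left translation $\Sigma^x(u,\cdot)$ is a $d^x$-isometry by Proposition \ref{isom}. Combining these I would compute
$$d^x(\Sigma^x(u,v),u) = d^x(\Sigma^x(u,v),\Sigma^x(u,x)) = d^x(v,x) = |v|.$$
Then I apply the generalized triangle inequality for $d^x$ --- which holds with the same constant $Q_{\mathbb X}$ as for $d$ by Proposition \ref{d_x} --- through the intermediate point $u$:
$$|\Sigma^x(u,v)| = d^x(x,\Sigma^x(u,v)) \leq Q_{\mathbb X}\big(d^x(x,u) + d^x(u,\Sigma^x(u,v))\big),$$
and I convert $d^x(u,\Sigma^x(u,v))$ into $d^x(\Sigma^x(u,v),u)=|v|$ using the generalized symmetry constant $c_{\mathbb X}$. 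This yields $|\Sigma^x(u,v)| \leq Q_{\mathbb X}(|u| + c_{\mathbb X}|v|) \leq Q_{\mathbb X} c_{\mathbb X}(|u|+|v|)$, so that $c = Q_{\mathbb X} c_{\mathbb X}$ works, in fact independently of $x$.

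The main obstacle is to make the isometry step legitimate under the stated hypotheses: Proposition \ref{isom} is stated for uniform structures, whereas here only a strong non-degenerate structure is assumed, and the identity $\Sigma^x(u,x)=u$ must be available on the same domain where $\Sigma^x(u,v)$ is defined. If one prefers to avoid invoking Proposition \ref{isom}, the alternative is to run the argument at the level of the $\varepsilon$-approximants $\Sigma_\varepsilon^x(u,v) = \delta_{\varepsilon^{-1}}^x \delta_\varepsilon^{\delta_\varepsilon^x u} v$: writing $x' = \delta_\varepsilon^x u$ and $x'' = \delta_\varepsilon^{x'} v$, one applies the generalized triangle inequality of $d$ in the form $d(x,x'') \leq Q_{\mathbb X}(d(x,x') + d(x',x''))$, rescales by $\tfrac{1}{\varepsilon}$, and passes to the limit via (A3), noting that $\tfrac{1}{\varepsilon}d(x,x') \to |u|$ and $\tfrac{1}{\varepsilon}d(x',x'') \to |v|$. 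Controlling this second limit, where the base point $x'=\delta_\varepsilon^x u$ moves with $\varepsilon$, is precisely the delicate point and is where the continuity of $d^x$ in the base point (Remark \ref{top}) must be used carefully.
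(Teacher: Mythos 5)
Your handling of 1) and 2) agrees with the paper's (cone property, nondegeneracy of the structure). The triangle inequality 3) is where the problem lies, and both of your routes founder on the same point, which you yourself flag but do not resolve: they require \emph{uniformity} of the dilation structure, while Proposition \ref{odnor} assumes only a strong nondegenerate one. Your main route rests on left-invariance of $d^x$, i.e.\ on Proposition \ref{isom}, whose hypothesis (and whose proof, which applies (A3) at the moving base point $\delta^x_\varepsilon u$) genuinely needs uniformity. Your fallback at the level of $\Sigma^x_\varepsilon$ meets the identical obstruction undisguised: to conclude $\frac{1}{\varepsilon}d(x',x'')\to|v|$ with $x'=\delta^x_\varepsilon u$ you need the limit in (A3) to hold uniformly as the base point varies \emph{and} you need $d^y$ to depend continuously on the base point $y$; neither is available here --- Remark \ref{top} gives continuity of $d^x(u,v)$ in $(u,v)$ only, and axiom (A0) does not even assert continuity of dilations in the base point. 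Two further warning signs: your conclusion $c=Q_{\mathbb X}c_{\mathbb X}$ independent of $x$ is stronger than what the proposition asserts ($c=c(x)$), and Proposition \ref{isom} would in any case give the isometry only on the smaller ball $B^d(x,r)$, while 3) concerns $u,v$ in $B^d(x,R)$. (There is also a minor slip: $d^x(\Sigma^x(u,v),u)=d^x(v,x)$ equals $|v|=d^x(x,v)$ only up to the symmetry constant $c_{\mathbb X}$.)

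The paper's proof uses none of this; it is a scaling argument in the spirit of homogeneous norms on homogeneous groups, and it survives without uniformity because it needs only homogeneity, the compatibility of dilations with the product (Proposition \ref{dil_avt}, valid for strong structures), and boundedness of the product near the identity. By continuity of $\Sigma^x$ (Proposition \ref{a4_loc_gr}) choose $0<\tau\le R$ with $\bar{B}^{d^x}(x,\tau)\subseteq B^d(x,r)$ such that $\Sigma^x$ maps $\bar{B}^{d^x}(x,\tau)\times\bar{B}^{d^x}(x,\tau)$ into $B^d(x,R)\cap B^{d^x}(x,R)$. If $|v|\le|u|\le\tau$, one first checks via (A0) and Proposition \ref{cone_prop} that the dilation $\delta^x_{\tau|u|^{-1}}$ (with parameter $\ge 1$) may legitimately be applied to $u$ and to $v$; Proposition \ref{dil_avt} then lets one pull this dilation through the product,
$$\Sigma^x(u,v)=\delta^x_{\tau^{-1}|u|}\,\Sigma^x\bigl(\delta^x_{\tau|u|^{-1}}u,\ \delta^x_{\tau|u|^{-1}}v\bigr),$$
and homogeneity 1) gives $|\Sigma^x(u,v)|=\tau^{-1}|u|\,\bigl|\Sigma^x\bigl(\delta^x_{\tau|u|^{-1}}u,\delta^x_{\tau|u|^{-1}}v\bigr)\bigr|\le\tau^{-1}R\,|u|\le c\left(|u|+|v|\right)$ with $c=\tau^{-1}R$, because the rescaled arguments lie in $\bar{B}^{d^x}(x,\tau)$, so their product lies in $B^{d^x}(x,R)$. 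The remaining case $|u|>\tau$ is reduced to this one by applying $\delta^x_\mu$ with small $\mu<1$ to both arguments and cancelling $\mu$ by homogeneity. This is the mechanism you would need to adopt: the inequality \eqref{treug} is extracted from scaling and boundedness, not from invariance of $d^x$ under left translations, and the resulting constant is accordingly $x$-dependent.
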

\begin{proof}
The first property directly follows from the conical property; the
second one is equivalent to the assumption of non-degeneracy of
the dilation structure. Let us show 3). Due to continuity  of the
product function $(u,v)\mapsto\Sigma^x(u,v)$ there exists
$0<\tau\leq R$ such that $\bar{B}^{d^x}(x,\tau)\subseteq B^d(x,r)$
and for all $u,v\in \bar{B}^{d^x}(x,\tau)$ we have
$\Sigma^x(u,v)\in B^d(x,R)\cap B^{d^x}(x,R)$. W. l. o. g. assume
$|v|\leq|u|$ and consider first the case when $|u|\leq\tau$ (then
$\varepsilon=\varepsilon(u)=\tau^{-1}|u|\leq 1$).

Let us show that the elements  $\delta^x_{\tau |u|^{-1}}u$, $
\delta^x_{\tau |u|^{-1}}v$ exist and belong to $B^d(x,R)$.

Indeed, it is sufficient to verify that $u\in
W_{\varepsilon^{-1}}(x)$. Since $\varepsilon\tau=|u|$, we have
$u\in \bar{B}^{d^x}(x,\tau\varepsilon)$ (see Remark \ref{balls}).
According to the choice of $\tau$ the following inclusions hold
$\bar{B}^{d^x}(x,\tau)\subseteq B^d(x,r)\subseteq B^d(x,R)$,
therefore, due to axiom (A0) and Proposition \ref{cone_prop}, it
is true that $u\in
\bar{B}^{d^x}(x,\tau\varepsilon)=\delta_{\varepsilon}^x\bar{B}^{d^x}(x,\tau)\subseteq
\delta_{\varepsilon}^xB^d(x,R)\subseteq
V_{\varepsilon}(x)\subseteq W_{\varepsilon^{-1}}(x)$. Note that it
can not happen that $\delta_{\varepsilon^{-1}}^xu\in U(x)\setminus
B^d(x,R)$, because
$\delta_{\varepsilon^{-1}}^xB^d(x,R\varepsilon)\subseteq
\delta_{\varepsilon^{-1}}^x\delta_{\varepsilon}^x B^d(x,R)=
B^d(x,R)$.

Thus, due to 1), $|\delta^x_{\tau |u|^{-1}}u|=d^x(x,
\delta^x_{\tau |u|^{-1}}u)=\tau,\ |\delta^x_{\tau
|u|^{-1}}v|\leq\tau$. Hence, by choice of $\tau$, the value
$\Sigma^x(\delta^x_{\tau |u|^{-1}}u, \delta^x_{\tau |u|^{-1}}v)\in
B^d(x,R)\cap B^{d^x}(x,R)$ is defined. Thus, from Proposition
\ref{dil_avt}, we can derive
 $$
 \Sigma^x(u,v)
=\delta^x_{\tau^{-1}|u|}\Sigma^x(\delta^x_{\tau |u|^{-1}}u,
\delta^x_{\tau |u|^{-1}}v).
 $$
It follows immediately that
\begin{multline*}
|\Sigma^x(u,v)|=|\delta^x_{\tau^{-1}|u|} ( \Sigma^x(\delta^x_{\tau
|u|^{-1}}u, \delta^x_{\tau |u|^{-1}}v))|\\
=\tau^{-1}|u| | \Sigma^x(\delta^x_{\tau |u|^{-1}}u, \delta^x_{\tau
|u|^{-1}}v) |\leq c|u|\leq c(|u|+|v|),
\end{multline*}
where $c=\tau^{-1}R$.

Let now be $|u|>\tau$ and $\Sigma^x(u,v)\in B^d(x,R)$ be defined.
Choose $0<\mu<1$ such that $\delta_{\mu}^xu, \delta_{\mu}^xu\in
B^{d^x}(x,\tau)$ (such $\mu$ exists due to continuity of
dilations). Then
$$\mu|\Sigma^x(u,v)|=|\delta_{\mu}^x\Sigma^x(u,v)|=|\Sigma^x(\delta_{\mu}^xu,\delta_{\mu}^xv)|
\leq c(|\delta_{\mu}^xu|+|\delta_{\mu}^xv|)=c\mu(|u|+|v|).$$ It
follows \eqref{treug}.
\end{proof}

\begin{Definition} The function $|\cdot|$,
introduced in Proposition \ref{odnor}, is said to be
 {\it the homogeneous
norm} on the local group ${\mathcal G}^x$.\end{Definition}

\begin{Definition}[\rm\cite{mal}]\label{glob_as_def}
It is said that for a local group ${\mathcal G}$ the {\it global
associativity property} holds if there is a neighborhood of the
identity  $V\subseteq{\mathcal G}$ such that for each $n$-tuple of
elements $a_1,a_2\ldots,a_n\in V$ whenever there exist two
different ways of introducing parentheses in this $n$-tuple, so
that all intermediate products are defined, the resulting products
are equal.
\end{Definition}

\begin{Theorem}[\rm Mal'tsev \cite{mal}]\label{mal}
A local topological group ${\mathcal G}$  is locally isomorphic to
a some topological group $G$  if and only if the global
associativity property in ${\mathcal G}$ holds.
\end{Theorem}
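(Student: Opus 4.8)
The plan is to prove the two implications separately, with the forward implication (a local isomorphism forces global associativity) being essentially formal, and the converse (global associativity lets one reconstruct a global group) carrying all the weight. For the forward direction, suppose $\varphi$ is a local isomorphism from a neighborhood $W\subseteq\mathcal{G}$ of $e$ onto a neighborhood of the identity of a topological group $G$, carrying $p$ to the multiplication of $G$ wherever $p$ is defined. In $G$ associativity is unconditional. I would take $V\subseteq W$ a symmetric neighborhood of $e$ and observe that a parenthesization of an $n$-tuple $a_1,\dots,a_n\in V$ counts in Definition~\ref{glob_as_def} only when all its intermediate products are defined, i.e. lie in the domain of $p$, hence in $W$; for such a parenthesization $\varphi$ transports the whole computation into $G$ step by step. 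Two legal parenthesizations therefore produce, under $\varphi$, the same element of $G$ by associativity in $G$, and since the final products lie in $W$ one recovers their equality in $\mathcal{G}$ by applying $\varphi^{-1}$. This gives global associativity.

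For the converse, the heart of the matter, I would build the enveloping global group by formal words. Fix the neighborhood $V$ furnished by global associativity, shrinking it to a symmetric neighborhood on which $i$ and pairwise products behave controllably. Let $G$ be the set of finite words in the letters of $V$ modulo the equivalence generated by the elementary moves: replacing an adjacent pair of letters $a,b$ by $p(a,b)$ when the latter is defined (and the reverse split); deleting or inserting the letter $e$; and deleting or inserting an adjacent pair $a,i(a)$. Concatenation of words descends to a well-defined binary operation on classes; it is associative because concatenation of words is associative, the empty word represents the identity, and the inverse of a class is represented by the reversed word with each letter replaced by its $i$-image. Thus $G$ is a group, and the natural map $\iota\colon V\to G$, $a\mapsto[a]$, satisfies $\iota(a)\iota(b)=\iota(p(a,b))$ whenever $p(a,b)$ is defined; in particular it respects the local-group structure of Definition~\ref{loc_gr}.

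The crucial step is to show $\iota$ is injective on some neighborhood of $e$, i.e. that $[a]=[b]$ in $G$ implies $a=b$. The plan is to construct a partial evaluation map $\operatorname{ev}$ defined on all words whose letters lie in a sufficiently small sub-neighborhood $V'\subseteq V$ and whose intermediate combinations stay inside $V$: one collapses the word to a single element of $\mathcal{G}$ by repeatedly forming adjacent products, discarding occurrences of $e$, and cancelling adjacent inverse pairs. Global associativity is exactly what guarantees that the order in which adjacent products are formed does not affect the result, and a short confluence check shows the $e$-insertions and inverse-pair cancellations commute with reassociation; hence $\operatorname{ev}$ is well defined and constant on equivalence classes with $\operatorname{ev}([a])=a$ for $a\in V'$. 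Injectivity of $\iota$ on $V'$ follows immediately, so $\iota$ is a bijection of $V'$ onto a subset of $G$.

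Finally I would topologize $G$ by declaring the transported neighborhood filter of $e\in\mathcal{G}$, pushed forward through $\iota$, to be a neighborhood basis of the identity of $G$, and translating it by left multiplication; continuity of $p$ and $i$ on $\mathcal{G}$ yields continuity of multiplication and inversion near the identity and hence everywhere by homogeneity. Then $\iota$ is a homeomorphism of a neighborhood of $e$ onto a neighborhood of the identity of $G$ respecting products, i.e. a local isomorphism, completing the converse. The main obstacle I anticipate is precisely the well-definedness of $\operatorname{ev}$: one must control that every admissible reduction keeps all intermediate elements inside the neighborhood where $p$, $i$ and global associativity are available, so that the different reduction paths of a single word provably meet — a confluence argument that is routine in spirit but delicate because the domain of $p$ is only partial.
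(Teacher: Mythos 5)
Your converse follows essentially the same route that the paper itself recalls from \cite{mal,gol_dries} inside the proof of Theorem \ref{dil_teor} (the paper never proves Theorem \ref{mal} --- it cites Mal'tsev, then reuses the word construction): the enveloping group is the set of words modulo contraction/expansion moves, with concatenation as product. The problem is that both halves of your argument have a gap at exactly the point where the content of the theorem sits. In the forward direction, the inference ``all its intermediate products are defined, i.e.\ lie in the domain of $p$, hence in $W$'' is a non sequitur: being in the domain $\Omega$ of $p$ has nothing to do with lying in the chart $W$ of the local isomorphism. Definition \ref{glob_as_def} confines only the letters $a_1,\dots,a_n$ to $V$, while $n$ is unbounded, so partial products of many small elements can leave any fixed $W$ (already in $\mathbb R$, $n$ elements of size $\varepsilon$ have partial sums of size up to $n\varepsilon$); on such elements $\varphi$ is undefined and the computation cannot be transported to $G$. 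This is not cosmetic: whether the forward implication even holds depends on whether ``defined'' means defined in all of $\mathcal G$ or defined within the chart. Olver's non-globalizable local Lie groups \cite{olv} have restrictions isomorphic to neighborhoods of honest Lie groups, yet long words of arbitrarily small elements detect their non-associativity; so under the unrestricted reading your transport argument cannot be repaired, and under the restricted reading you must say so explicitly and prove the (then nearly tautological) restricted statement.

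In the converse, injectivity of $\iota$ --- which you correctly isolate as the crux --- is not established, and the strategy you sketch cannot establish it as stated. Constancy of $\operatorname{ev}$ on equivalence classes would have to be proved by invariance under a single elementary move, but a single move applied to a word in the domain of $\operatorname{ev}$ (letters in $V'$, all intermediate combinations inside $V$) generally yields a word outside that domain: an expansion introduces letters that are only known to lie in $V$ (or merely in $\mathcal G$), and after any move the adjacent products need no longer be defined, let alone stay in $V$. Since a chain of moves joining two evaluable words may pass through non-evaluable words, move-by-move invariance does not chain together, and ``$\operatorname{ev}$ is constant on equivalence classes'' does not follow. What is actually required is a combinatorial common-refinement (confluence) lemma for this partial rewriting system --- for instance, that $(a)\sim(b)$ forces a common expansion that can be fully contracted, in two different orders, to $a$ and to $b$ --- after which global associativity finishes the proof. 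That lemma is the real content of Mal'tsev's theorem; the paper's remark immediately after the theorem stresses that such verifications ``can not be done by a trivial induction'' precisely because the needed intermediate products may fail to exist, and cites \cite{olv,gol} for published mistakes caused by exactly this confusion. Declaring it a short, routine confluence check leaves the theorem unproven.
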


\begin{Remark}
Unlike in the case of global groups, the verification of the
global associativity property for local groups is a nontrivial
task. This verification can not be done by a trivial induction as
for global groups since it would require to assume the existence
of all intermediate products which is, in general, not true for
local groups. See comments in  \cite{olv,gol} where there are some
references to papers with mistakes caused by misunderstandings of
this fact. In the local group ${\mathcal G}^x$ under our
consideration it is easy to provide  examples for $n=4$ such that
$u_i\in B^d(x,R)$ and combinations
$u=\Sigma^x(\Sigma^x(u_1,\Sigma^x(u_2,u_3)),u_4)$ and
$u^{\prime}=\Sigma^x(u_1,\Sigma^x(u_2,\Sigma^x(u_3,u_4)))$ exist
while the combination
$\Sigma^x(\Sigma^x(u_1,u_2),\Sigma^x(u_3,u_4)))$ is not defined.
More examples can be found in \cite{mal,olv}.
\end{Remark}

\begin{Proposition}\label{glob_as}
For the local group ${\mathcal G}^x$, the global associativity
property holds.
\end{Proposition}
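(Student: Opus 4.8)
The plan is to reduce the statement to the classical group-theoretic associativity induction by using the dilations as automorphisms to shrink all elements into a regime where \emph{every} intermediate product is defined. As the neighborhood $V$ in Definition~\ref{glob_as_def} I would fix a ball $V=B^{d^x}(x,\rho_0)$ small enough that $\overline{V}\subseteq B^d(x,r)$, where $r$ is the radius from Axiom~(A4) on which $\Sigma^x$ is defined (so that $\Omega\supseteq B^d(x,r)\times B^d(x,r)$ by Proposition~\ref{a4_loc_gr}). Let $a_1,\dots,a_n\in V$ and suppose two parenthesizations $P$ and $P'$ of the tuple $(a_1,\dots,a_n)$ are both defined, i.e. all their intermediate $\Sigma^x$-products exist. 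For $\mu\le 1$ set $b_i=\delta^x_\mu a_i$. By Proposition~\ref{dil_avt}, applied recursively along the parenthesization tree, $\delta^x_\mu P(a_1,\dots,a_n)=P(b_1,\dots,b_n)$ and $\delta^x_\mu P'(a_1,\dots,a_n)=P'(b_1,\dots,b_n)$, with the right-hand sides defined. Since $\delta^x_\mu$ is a homeomorphism, hence injective, it suffices to prove $P(b)=P'(b)$ for a single suitably small $\mu$.

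The key estimate is that for small $\mu$ every sub-product of $(b_1,\dots,b_n)$ is defined. By homogeneity (Proposition~\ref{odnor}, Assertion 1)) we have $|b_i|=\mu|a_i|\le\mu\rho_0$. Iterating the generalized triangle inequality (Proposition~\ref{odnor}, Assertion 3)) with constant $c=c(x)$, I claim by induction on the number $k$ of factors that there exist constants $A_k<\infty$ (depending only on $k$ and $c$) such that any product of $k$ of the $b_i$, in any parenthesization, is defined and satisfies $|\cdot|\le A_k\mu\rho_0$. Indeed, such a product has the form $\Sigma^x(L,R)$ with $L,R$ sub-products of $j$ and $k-j$ factors; by the inductive hypothesis $L\in B^{d^x}(x,A_j\mu\rho_0)$ and $R\in B^{d^x}(x,A_{k-j}\mu\rho_0)$. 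Choosing $\mu$ so small that $B^{d^x}(x,(\max_{k\le n}A_k)\mu\rho_0)\subseteq B^d(x,r)$, both $L,R\in B^d(x,r)$, whence $\Sigma^x(L,R)$ is defined (its domain contains $B^d(x,r)\times B^d(x,r)$); the triangle inequality then gives $|\Sigma^x(L,R)|\le c(|L|+|R|)\le A_k\mu\rho_0$ with $A_k:=c(A_j+A_{k-j})$, closing the induction. This resolves the apparent circularity: definedness of each product is obtained \emph{before} the norm bound is applied to it.

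Fixing such a $\mu$, all elements $b_i$ and all their sub-products lie in $B^d(x,r)$, so in this regime $\mathcal{G}^x$ behaves like a genuine group: the single-associativity relation $\Sigma^x(\Sigma^x(p,q),s)=\Sigma^x(p,\Sigma^x(q,s))$ holds by property 3) of Definition~\ref{loc_gr}, whose hypotheses (definedness of the pairs $(p,q)$, $(q,s)$, $(\Sigma^x(p,q),s)$, $(p,\Sigma^x(q,s))$) are guaranteed by the previous step. The classical induction on $n$ then applies: writing an arbitrary parenthesization as $\Sigma^x(L,R)$ with $L$ a parenthesization of $(b_1,\dots,b_k)$ and $R$ of $(b_{k+1},\dots,b_n)$, the inductive hypothesis identifies $L$ and $R$ with their right-normed forms, and repeated use of single associativity transports $\Sigma^x(L,R)$ to the right-normed form $\Sigma^x(b_1,\Sigma^x(b_2,\dots,\Sigma^x(b_{n-1},b_n)\dots))$ of the whole tuple. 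Hence $P(b)=P'(b)$, and therefore $P(a)=P'(a)$, as required.

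The main obstacle is exactly the one flagged in the Remark preceding the statement: the naive induction for associativity is invalid when some intermediate product fails to be defined. The whole force of the argument is that the dilation automorphisms (Proposition~\ref{dil_avt}) let us rescale the given defined products without changing the equality to be proved, while the homogeneous-norm triangle inequality (Proposition~\ref{odnor}) forces \emph{all} auxiliary products to become defined after rescaling; together they legitimize the classical induction, which would otherwise be unavailable in the local setting.
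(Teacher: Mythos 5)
Your proof is correct and is essentially the paper's own argument: the paper likewise rescales the tuple by a dilation $\delta^x_{s_n}$ (with $s_n=\tau/(c_{n-1}R_x)$, $c_n=nc^{n-1}$), uses the homogeneous-norm triangle inequality \eqref{treug} inductively to obtain definedness of all products of the rescaled elements, runs the classical associativity induction on them, and concludes by applying the injective homeomorphism $\delta^x_{s_n^{-1}}$. Your only departures are cosmetic --- explicit recursive constants $A_k=c(A_j+A_{k-j})$ in place of the paper's $nc^{n-1}$, and a more detailed write-up of the simultaneous definedness/norm-bound induction that the paper leaves as ``easy to show''.
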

\begin{proof}
Let $u_1,u_2,\ldots,u_n\in B^d(x,R)$, and $u,u^{\prime}$ be
elements obtained from the $n$-tuple $(u_1,u_2,\ldots,u_n)$ by
introducing parentheses such that the products exist. We need to
show that $u=u^{\prime}$.

Let $\tau$ be such as in the proof of Proposition \ref{odnor},
$R_x=\inf\{\xi\mid B^d(x,R)\subseteq B^{d^x}(x,\xi)\}$,
$c_n=nc^{n-1}$ where $c$ is from \eqref{treug}. Let
$s_n=\frac{\tau}{c_{n-1}R_x}$ and $\tilde{u_i}=\delta_{s_n}^xu_i$.
By induction on $n$ and using  \eqref{treug} it is easy to show
that all possible products of length not bigger than $n$ of the
elements $\tilde{u_i}$ are defined. Thus it can be trivially shown
(as for global groups) that $\delta_{s_n}^x(u)=
\delta_{s_n}^x(u^{\prime})$. Applying to both sides of the last
equality the homeomorphism
 $\delta^x_{s_n^{-1}}$ (which is, in particular, an injective mapping),
 we get $u=u^{\prime}$ and finish the proof.
\end{proof}

\begin{Definition}[\rm\cite{sieb}, Proposition 5.4]\label{contract}
A topological group $G$ is {\it contractible} if there is an
automorphism  $\tau: G\to G$ such that
$\lim\limits_{n\to\infty}\tau^ng=e$ for all $g\in G$.

\end{Definition}

\begin{Definition}
A topological space is {\it locally compact} if any of its points
has a neighborhood the closure of which is compact. A local group
is {\it locally compact} if there is a neighborhood of its
identity element the closure of which is compact.
\end{Definition}

The proof of Theorem \ref{dil_teor} relies on the following
statement, see Remark \ref{bul_rem} for comments.

\begin{Proposition}[\rm\cite{sieb}, Corollary 2.4]\label{sieb} For a connected
locally compact group $G$ the following assertions are equivalent:

$(1)$ $G$ is contractible;

$(2)$ $G$ is a simply connected Lie group the Lie algebra $V$ of
which is nilpotent and  graded, i.~e. there is a decomposition
$V=\bigoplus\limits_{s>0}V_s$ such that $[V_s,V_t]\subseteq
V_{s+t}$ for all $s,t> 0.$ In particular,  $V$ is
nilpotent.\end{Proposition}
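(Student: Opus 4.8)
The plan is to establish the two implications separately, with $(2)\Rightarrow(1)$ being elementary and $(1)\Rightarrow(2)$ carrying all the weight. For $(2)\Rightarrow(1)$, assume $G$ is simply connected nilpotent with graded Lie algebra $V=\bigoplus_{s>0}V_s$ satisfying $[V_s,V_{s'}]\subseteq V_{s+s'}$. For each fixed $t\in(0,1)$ I would define the linear map $\delta_t\colon V\to V$ acting on $V_s$ as multiplication by $t^s$. The grading relation shows immediately that $\delta_t$ is a Lie algebra automorphism, since for $X\in V_s$, $Y\in V_{s'}$ one has $\delta_t[X,Y]=t^{s+s'}[X,Y]=[\delta_t X,\delta_t Y]$. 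Because $G$ is simply connected and nilpotent, $\exp\colon V\to G$ is a diffeomorphism, so $\delta_t$ integrates to a unique automorphism $\tau$ of $G$. Writing a general element as $g=\exp\!\big(\sum_s X_s\big)$, I obtain $\tau^n g=\exp\!\big(\sum_s t^{ns}X_s\big)\to e$ as $n\to\infty$, since $t^{ns}\to 0$ for every $s>0$; hence $G$ is contractible.

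For $(1)\Rightarrow(2)$ the decisive first step is to reduce to the Lie case. I would show that the contractive automorphism $\tau$ forces $G$ to have no small subgroups: contractivity makes the iterated images $\tau^n(K)$ of a compact neighbourhood of $e$ shrink towards $e$, and this can be used to produce a neighbourhood of $e$ containing no nontrivial subgroup. Once the no-small-subgroups property is available, the solution of Hilbert's fifth problem (Gleason--Montgomery--Zippin, see \cite{hilb,gleas,mz}) guarantees that the connected locally compact group $G$ is a Lie group; the continuous automorphism $\tau$ is then automatically analytic, and its differential $A=d\tau_e$ is an automorphism of the Lie algebra $\mathfrak g=\operatorname{Lie}(G)$.

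The next step is purely Lie-algebraic. The hypothesis $\tau^n g\to e$ translates near the identity into $A^n X\to 0$ for all $X\in\mathfrak g$, so $A$ is a \emph{contractive} automorphism, i.e. its spectral radius is strictly less than $1$. Passing to the complexification and using the generalized eigenspace decomposition $\mathfrak g_{\mathbb C}=\bigoplus_\lambda\mathfrak g_\lambda$, the automorphism property gives $[\mathfrak g_\lambda,\mathfrak g_\mu]\subseteq\mathfrak g_{\lambda\mu}$. An iterated bracket of length $k$ lands in the eigenspace for a product of $k$ eigenvalues, whose modulus is at most $\rho^k$ with $\rho<1$; since nonzero eigenvalues have modulus bounded below by some $m>0$, any bracket long enough that $\rho^k<m$ must vanish, and $\mathfrak g$ is nilpotent. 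Setting $s=-\log|\lambda|>0$ and letting $V_s$ be the real part of the sum of eigenspaces with $|\lambda|=e^{-s}$, the multiplicativity $|\lambda\mu|=e^{-(s+s')}$ yields exactly the grading $V=\bigoplus_{s>0}V_s$ with $[V_s,V_{s'}]\subseteq V_{s+s'}$.

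It remains to see that $G$ is simply connected. A connected nilpotent Lie group has a unique maximal compact subgroup, which is a central torus and therefore characteristic, hence $\tau$-invariant; but a nontrivial compact group admits no contractive automorphism (Haar measure is preserved while $\tau^n$ collapses everything to $e$), so this torus is trivial. Consequently $\exp$ is a diffeomorphism onto $G$ and $G$ is simply connected, finishing the argument. The main obstacle is concentrated in the reduction to the Lie case: verifying the absence of small subgroups and invoking the deep structure theory behind Hilbert's fifth problem, together with the passage from a contractive automorphism to both nilpotency and a compatible grading of the Lie algebra. These are precisely the technical ingredients of Siebert's \cite{sieb} analysis, whose line of argument I would follow.
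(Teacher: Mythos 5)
First, a point of reference: the paper itself does not prove this proposition --- it is imported verbatim from Siebert (\cite{sieb}, Corollary 2.4), and the paper's only commentary is the remark following Theorem \ref{dil_teor}, which notes that the crucial step of Siebert's proof is showing that a connected locally compact contractible group is a Lie group, and that this rests on the Montgomery--Zippin theory around Hilbert's fifth problem. Your overall plan --- reduce to the Lie case via that theory, then analyze the contractive automorphism on the Lie algebra --- coincides with the strategy the remark describes, and most of your individual steps are sound: the construction of the contractive automorphism from the grading for $(2)\Rightarrow(1)$, the spectral argument giving nilpotency and the grading $[V_s,V_{s'}]\subseteq V_{s+s'}$, and the elimination of the central torus to get simple connectedness.

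The genuine gap is in your reduction to the Lie case. You propose to derive the no-small-subgroups property from the fact that $\tau^n(K)$ shrinks to $e$ for a compact neighbourhood $K$ of $e$. This cannot work as stated: a small subgroup $H\subseteq K$ is not $\tau$-invariant, so the shrinking of $\tau^n(K)$ only shows that the image subgroups $\tau^n(H)$ become small --- it places no constraint on $H$ itself. Note that your argument nowhere uses connectedness, and without connectedness the conclusion is simply false: $\mathbb{Q}_p$ with the contractive automorphism $x\mapsto px$ is a locally compact contractible group in which every neighbourhood of $0$ contains the nontrivial compact subgroups $p^m\mathbb{Z}_p$. So connectedness must enter through the deep structure theory itself, not through a soft shrinking argument, and this is indeed how Siebert proceeds (contrary to your closing claim that your route follows his): by Yamabe's theorem, a connected locally compact group has arbitrarily small compact \emph{normal} subgroups $N$ with $G/N$ a Lie group; the maximal compact normal subgroup $K(G)$ is characteristic, hence $\tau$-invariant, hence $\tau^n(K(G))=K(G)$ for all $n$, and uniform contraction on compacta (itself a lemma needing proof) forces $K(G)=\{e\}$; therefore every such $N$ is trivial and $G$ is already a Lie group. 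If you replace your NSS step by this argument via characteristic compact normal subgroups, the remainder of your proof goes through.
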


\begin{Theorem}\label{dil_teor} Let $({\mathbb X},d,\delta)$ be
a strong nondegenerate dilation structure. Then

$1)$ For any $x\in {\mathbb X}$, the local group ${\mathcal G}^x$
is locally isomorphic to a connected simply connected Lie group
$G^x$ the Lie algebra of which is nilpotent and graded;

$2)$ If the dilation structure is, in addition, uniform, then the
Lie group $G^x$ is the tangent cone $($in the sense of Definition
$\ref{cone_def}$$)$ to ${\mathbb X}$ at $x$, i.~e., left
translations on $G^x$ are isometries w. r. t. quasimetric
$\tilde{d}^x$ on $G^x$ which arises from $d^x$ in a natural way.
The local group ${\mathcal G}^x$ is a local tangent cone.
\end{Theorem}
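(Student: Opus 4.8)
The plan is to read off the algebraic structure of $G^x$ from Mal'tsev's and Siebert's theorems, and then to recognize $G^x$ metrically as the cone produced by Theorem \ref{exist_teor}. For part 1) I would first observe that, by Proposition \ref{a4_loc_gr}, ${\mathcal G}^x=(U(x),x,\operatorname{inv}^x,\Sigma^x)$ is a local topological group, and by Proposition \ref{glob_as} it enjoys the global associativity property; hence Mal'tsev's Theorem \ref{mal} supplies a topological group $G^x$ locally isomorphic to ${\mathcal G}^x$. Replacing $G^x$ by the subgroup generated by (the image of) a connected neighborhood of $e$, I may assume $G^x$ is connected: a neighborhood of $x$ in ${\mathcal G}^x$ is connected because, by Axiom (A1) together with the continuity of $\mu\mapsto\delta_\mu^x y$ from (A0), each $y\in U(x)$ is joined to $x$ by the path $\mu\mapsto\delta_\mu^x y$. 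Moreover $G^x$ is locally compact, since by Remark \ref{top} the space $(U(x),d^x)$ is boundedly compact, so a small closed $d^x$-ball about $x=e$ is compact and transports through the local isomorphism.

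To apply Siebert's criterion (Proposition \ref{sieb}) it then remains to exhibit a contracting automorphism of $G^x$, and this is the main obstacle. The natural candidate is a single dilation: fix $\mu_0\in(0,1)$. By Proposition \ref{dil_avt} the map $\delta_{\mu_0}^x$ is a local automorphism of ${\mathcal G}^x$ (it commutes with $\Sigma^x$ and with $\operatorname{inv}^x$); it is continuous by (A0); by Axiom (A0) it carries a suitable ball $V=B^d(x,\tilde r)$ into itself; and by Axioms (A1)--(A2) one has $(\delta_{\mu_0}^x)^n=\delta_{\mu_0^n}^x$ with $\delta_{\mu_0^n}^x y\to x$ for every $y$. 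The delicate point is to promote this local self-map to a genuine automorphism $\tau$ of the global group $G^x$: every $g\in G^x$ is a finite product $g=v_1\cdots v_k$ of elements of $V$, and one sets $\tau(g)=\delta_{\mu_0}^x(v_1)\cdots\delta_{\mu_0}^x(v_k)$; the work lies in proving this is independent of the chosen factorization, which rests on the global associativity property and on $\delta_{\mu_0}^x$ respecting the local relations. Once $\tau$ is built, $\tau^n$ is the global extension of $\delta_{\mu_0^n}^x$, whence $\tau^n g\to e$ for every $g$, so $G^x$ is contractible and Proposition \ref{sieb} identifies $G^x$ as a connected, simply connected Lie group with nilpotent graded Lie algebra. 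This proves 1).

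For part 2) I would invoke Theorem \ref{exist_teor}: since $({\mathbb X},d,\delta)$ is nondegenerate, $(U(x),x,d^x)$ is a local tangent cone to ${\mathbb X}$ at $x$; as the underlying set and topology of ${\mathcal G}^x$ coincide with those of $(U(x),d^x)$ (Remark \ref{top}), this already yields the final assertion that ${\mathcal G}^x$ is a local tangent cone. To obtain the global cone, I transport $d^x$ to a neighborhood of $e$ in $G^x$ via the local isomorphism of part 1), and extend it to a left-invariant quasimetric $\tilde d^x$ on all of $G^x$ by declaring left translations to be isometries. Here the uniformity hypothesis is essential: Proposition \ref{isom} guarantees that each left translation $\Sigma^x(u,\cdot)$ is a $d^x$-isometry, which is exactly what makes the left-invariant extension consistent, while Proposition \ref{d_x} ensures $\tilde d^x$ is a genuine quasimetric with the inherited constants $c_{\mathbb X},Q_{\mathbb X}$.

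Finally, $(G^x,\tilde d^x)$ is complete and boundedly compact (inherited from the bounded compactness of $(U(x),d^x)$ through the left-invariant structure, Remark \ref{top}), and a neighborhood of $e$ in it is, by construction, isometric to the local tangent cone $(U(x),d^x)$. Since the tangent cone is determined by an arbitrarily small neighborhood of the basepoint and is unique up to isometry (Remark \ref{cone_rem} and Theorem \ref{km_teor}), it follows that $(G^x,\tilde d^x)$ is the tangent cone $T_x{\mathbb X}$; the automorphism $\tau$ acts on $\tilde d^x$ by rescaling, exhibiting $G^x$ as an honest cone and completing 2). I expect the globalization of the contracting dilation in the second paragraph to be the genuinely hard step, all other steps being either direct citations of the preceding propositions or routine transfers through the local isomorphism.
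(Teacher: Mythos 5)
Your outline coincides with the paper's architecture: Mal'tsev's Theorem \ref{mal} (via Propositions \ref{a4_loc_gr} and \ref{glob_as}) produces $G^x$, Siebert's Proposition \ref{sieb} identifies it once a contractive automorphism is exhibited, and Theorem \ref{exist_teor} together with Proposition \ref{isom} handles the metric statement. But the two steps you defer are exactly where the proof lives, and the paper closes both by using the \emph{explicit model} of the Mal'tsev group rather than treating it abstractly: $G^x$ is the set of words $(u_1,\ldots,u_n)$ with letters in ${\mathcal G}^x$, modulo the equivalence generated by contractions $(\ldots,u_i,u_{i+1},\ldots)\mapsto(\ldots,\Sigma^x(u_i,u_{i+1}),\ldots)$ and the inverse expansions. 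In this model your ``genuinely hard step'' becomes short: define $\tau$ letter-wise, $\tau([(u_1,\ldots,u_n)])=[(\delta^x_s u_1,\ldots,\delta^x_s u_n)]$ for a fixed $s<1$. Since $\delta^x_s$ is defined on all of $U(x)$ and, by Proposition \ref{dil_avt}, $\delta^x_s\Sigma^x(u,v)=\Sigma^x(\delta^x_s u,\delta^x_s v)$ whenever $\Sigma^x(u,v)$ exists and $s\le 1$, every contraction or expansion between representative words is carried to a move of the same type, so $\tau$ is well defined on equivalence classes; it is a homomorphism because the product is concatenation of words. Your version --- pick a factorization $g=v_1\cdots v_k$ and check independence of the factorization --- is this same argument, but without the word model ``equality of two products in $G^x$'' has no concrete meaning, so the check you flag as hard cannot even be started; with the word model it is a one-line consequence of Proposition \ref{dil_avt}.

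The second gap is in part 2), and as phrased it is not merely unexecuted but not yet a construction: ``extend $d^x$ to a left-invariant quasimetric by declaring left translations to be isometries'' reduces $\tilde d^x(g,h)$ to a norm of $g^{-1}h$, but for $g^{-1}h$ outside the image of the local chart there is no value to assign, and a chain-type length construction is problematic when only a generalized triangle inequality is available. The paper instead defines $\tilde d^x$ by scaling whole words into the chart: with $s_{mn}=s_{\max\{m,n\}}$ the constants from the proof of Proposition \ref{glob_as} (chosen so that all products of the shrunken letters exist),
$$\tilde{d}^x\bigl([(u_1,\ldots,u_n)],[(v_1,\ldots,v_m)]\bigr)=\frac{1}{s_{mn}}\,d^x\bigl(\Sigma^x(\delta_{s_{mn}}^xu_1,\ldots,\delta_{s_{mn}}^xu_n),\ \Sigma^x(\delta_{s_{mn}}^xv_1,\ldots,\delta_{s_{mn}}^xv_m)\bigr);$$
the independence of representatives and of the scale, the generalized triangle inequality with constant $Q_{\mathbb X}$, and the fact that $\varphi(g)=[(g)]$ is an isometry all follow from the cone property (Proposition \ref{cone_prop}) and Proposition \ref{dil_avt}; only then do Theorem \ref{exist_teor} and Proposition \ref{isom} finish the argument, as you anticipate. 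So: right architecture and correctly identified pressure points, but both abstract appeals (``promote the local automorphism,'' ``declare left translations isometries'') should be replaced by the concrete word-model constructions, which is precisely what the paper does.
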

\begin{proof}
Since ${\mathbb X}$ is boundedly compact,  ${\mathcal G}^x$ is a
locally compact local group. Due to existence on ${\mathcal G}^x$
of a one-parameter family of dilations this local group is
linearly connected (indeed, any two points $u,v\in U(x)$ can be
connected by the continuous curve
$\{\delta^x_{\varepsilon}(u)\}_{1\geq \varepsilon\geq
0}\circ\{\delta^x_{\varepsilon}(v)\}_{0\leq \varepsilon\leq 1}$),
hence ${\mathcal G}^x$ is connected.

 According to Proposition
\ref{glob_as}, the global associativity property in ${\mathcal
G}^x$ holds. Hence, by Theorem \ref{mal}, ${\mathcal G}^x$ is
locally isomorphic to some topological group $G^x$. Let us use the
construction of this group given in the proof of Theorem \ref{mal}
in \cite{mal} and in more details in \cite{gol_dries}: $G^x$ is
obtained as the group of equivalence classes of words arranged
from elements of the initial local group ${\mathcal G}^x$.

Namely, let ${\mathcal G}^x_{(n)}=\{(u_1,\ldots,u_n)\mid
u_i\in{\mathcal G}^x\}$ be the set of words of length $n$, and
$\tilde{G}^x=\bigcup\limits_{n\in\mathbb N}{\mathcal G}^x_{(n)}$.
On $\tilde{G}^x$ the following two operations can be introduced.
The contraction is defined as
$$(u_1,\ldots,u_n)\in{\mathcal G}^x_{(n)}
\mapsto (u_1,\ldots,u_{i-1},\Sigma^x(u_i,u_{i+1}),u_{i+2},\ldots,
u_n)\in{\mathcal G}^x_{(n-1)},$$ if $\Sigma^x(u_i,u_{i+1})$
exists. The expansion is defined as
$$(u_1,\ldots,u_n)\in{\mathcal G}^x_{(n)}\mapsto (u_1,\ldots,u_{i-1},v,w,u_{i+1},
\ldots, u_n)\in{\mathcal G}^x_{(n+1)},$$ if $u_i=\Sigma^x(v,w)$.
Two words $(u_1,\ldots, u_n)$ and $(v_1,\ldots,v_m)$ are called
equivalent (which is denoted as $(u_1,\ldots,
u_n)\sim(v_1,\ldots,v_m)$) if they can be obtained one from
another by a finite sequence of contractions and expansions.
Finally, let $G^x=\tilde{G}^x/\sim$.
 The product and inverse functions and the neutral element on  $G^x$ are
 defined respectively as
 $$[(u_1,\ldots,u_n)]\cdot[(v_1,\ldots,v_m)]=[(u_1,\ldots,u_n,v_1,\ldots,v_m)],$$
  $$[(u_1,\ldots,u_n)]^{-1}=[(\text{inv}^xu_{n},\ldots,\text{inv}^xu_{1})],
  \ e_{G^x}=[(e_{{\mathcal G}^x})].$$
  It is easy to verify that the function $\varphi:{\mathcal G}^x\to G^x$
  which maps the element
   $g$  to the equivalence class $[(g)]$, is a local isomorphism.

  The topology on $G^x$ is defined as follows: if ${\mathcal B}$ is the
  basis of topology
  of ${\mathcal G}^x$, then $B=\{\varphi(U)\mid U\in{\mathcal B}\}$
  is the base of topology
  of $G^x$. The  verification
  of axioms of a topological basis can be done straightforwardly.

For an arbitrary $s<1$ define a contractive automorphism on $G^x$
as
$$\tau([(u_1,\ldots,u_n)])=
[(\delta^x_s(u_1),\ldots,\delta^x_s(u_n))].$$ Due to the linear
connectedness of the group $G^x$  (because of the obvious relation
$[(e,e,$ $\ldots,e)]$$=[(e)]$ and the fact that the local group
 ${\mathcal G}^x$ is linearly connected), by Proposition
 \ref{sieb} we get the first assertion of the theorem.

 Now let $s_{mn}=s_{\max\{m,n\}}$ (in notation of the proof of Proposition
  \ref{glob_as})
 and define on $G^x$ a quasimetric as
\begin{multline*}
\tilde{d}^x([(u_1,\ldots,u_n)],[(v_1,\ldots,v_m)])
\\=\frac{1}{s_{mn}}d^x(\Sigma^x(\delta_{s_{mn}}^xu_1,\ldots,\delta_{s_{mn}}^xu_n),
\Sigma^x(\delta_{s_{mn}}^xv_1, \ldots,\delta_{s_{mn}}^xv_m)).
\end{multline*}
Note that  Propositions \ref{cone_prop}, \ref{dil_avt} imply the
generalized triangle inequality for $\tilde{d}^x$ with the
constant $Q_{\mathbb X}$ and that $\varphi$ is an isometry. Taking
into account Theorem \ref{exist_teor} and Proposition \ref{isom}
we obtain the second assertion.
\end{proof}

\begin{Remark}% \label{bul_rem}
 Let us
give a brief overview of the proof of Proposition \ref{sieb}, for
showing that it can not be straightforwardly applied to the case
of local groups. The crucial part of this proof is to show that a
connected locally compact contractible group is a Lie group. This
proof heavily relies on several main theorems from the book of
Montgomery and Zippin \cite{mz}, where the solution of H5 is
given. The proofs of those theorems are long and complicated, and,
as noted in \cite[p.~119]{mz}, ``Most of the Lemmas can be also
proved by essentially the same arguments for the case of a locally
compact connected {\it local} group but we shall not complicate
the statements and proofs of the Lemmas by inserting the necessary
qualifications.'' This last statement shows, that proving the
theorems (based on the mentioned lemmas) that we would need, for
the case of local groups, is, at least, nontrivial (and not done
by anybody, as far as we know). It
 would require a
careful study of large parts of the book \cite{mz}.

Overcoming this difficulty we apply Mal'tsev's theorem \ref{mal}
to reduce the consideration to the case of (global) groups, for
which Proposition \ref{sieb} can be applied.
\end{Remark}

\begin{Remark}\label{u_n}
There is an another look at the proof of Proposition
\ref{glob_as}. It
 actually can be proved without the triangle
inequality \eqref{treug} and any (quasi)metric structure, by means
of the following simple topological fact (\cite[Chapter 3, Section
23, E]{pon},
 see also \cite{gol}): in any local group
there is a decreasing sequence of neighborhoods $\{{\mathcal
U}_n\}_{n\in\mathbb N}$ such that, for all elements $u_1,\ldots
u_n\in {\mathcal U}_n$, their products are defined with any
combinations of parentheses. Using this fact, an analog of Theorem
\ref{dil_teor}, for locally compact topological spaces with
dilations, can be proved (for this purpose, axioms of Definition
\ref{dil_struct} should be modified in a natural way).
Globalizability of locally compact locally connected contractible
local groups was proved in \cite{gol_dries}, independently of our
paper. The result of \cite{gol_dries} can be viewed as a
generalization of the first assertion of Theorem \ref{dil_teor}.

On the other hand, using the (quasi)metric structure allows to
make the proof of global associativity more constructive in
comparison with the purely topological one.
\end{Remark}

\section{Example: Carnot-Carath\'{e}odory spaces}\label{cc_sec}

\begin{Definition}[\cite{bel,gro,karm, nsw, vk, vk1, vk2}]
\label{cc} Fix a connected Riemannian
$C^{\infty}$-mani\-fold~$\mathbb M$ of dimension~$N$. The
manifold~$\mathbb M$ is called a {\it regular
Carnot-Carath\'{e}odory space} if in the tangent bundle $T\mathbb
M$ there is a filtration
$$H\mathbb M=H_1\mathbb M\subseteq\ldots\subseteq H_i\mathbb M\subseteq\ldots\subseteq
H_M\mathbb M=T\mathbb M$$ of subbundles of the tangent bundle
$T\mathbb M$, such that, for each point $p\in\mathbb M$, there
exists a neighborhood $U\subset\mathbb M$
 with a collection of $C^{1,\alpha}$ (where $\alpha\in (0,1]$) vector fields
$X_1,\dots,X_N$ on $U$ enjoying the following three properties.
For each $v\in U$ we have

$(1)$ $X_1(v),\dots,X_N(v)$ constitutes a basis of $T_v\mathbb M$;

$(2)$ $H_i(v)=\operatorname{span}\{X_1(v),\dots,X_{\dim H_i}(v)\}$
is a subspace of $T_v\mathbb M$ of  dimension $\dim H_i$,
$i=1,\ldots,M$, where $H_1(v)=H_v\mathbb M$;

$(3)$
\begin{equation}\label{tcomm}[X_i,X_j](v)=\sum\limits_{\operatorname{deg}
X_k\leq \operatorname{deg} X_i+\operatorname{deg}
X_j}c_{ijk}(v)X_k(v)
\end{equation}
where the {\it degree} $\deg X_k$ equals $\min\{m\mid X_k\in
H_m\}$;

The number $M$ is called the {\it depth} of the
Carnot-Carath\'{e}odory space.
\end{Definition}

\begin{Remark} According to \cite{vk}, all statements below are also valid for the case
when $X_i\in C^1$ and $M=2$. \end{Remark}

\begin{Definition}\label{exp}
For any point $g\in\mathbb M$, define the mapping
\begin{equation}\label{exp1}
\theta_g(v_1,\ldots,v_N)=\exp\biggl(\sum\limits_{i=1}^Nv_iX_i\biggr)(g).
\end{equation}
It is known that $\theta_g$ is a $C^1$-diffeomorphism of the
Euclidean ball  $B_E(0,r)\subseteq \mathbb R^N$ to $\mathbb M$,
where $0\leq r<r_g$ for some (small enough) $r_g$. The collection
$\{v_i\}_{i=1}^N$ is called {\it the normal coordinates} or {\it
the coordinates of the $1^{\text{st}}$ kind  $($with respect to
$u\in\mathbb M)$} of the point $v\in U_g=\theta_g(B_E(0,r_g))$.
Further we will consider a compactly embedded neighborhood
 ${\mathcal
U}\subseteq\mathbb M$ such that ${\mathcal
U}\subseteq\bigcap\limits_{g\in{\mathcal U}}U_g$.
\end{Definition}

\begin{Definition}\label{d} By means of coordinates \eqref{exp1},
 introduce on ${\mathcal U}$
the following quasimetric $d_{\infty}$. For $u,v\in {\mathcal U}$
such that $v=\exp\Bigl(\sum\limits_{i=1}^Nv_iX_i\Bigr)(u)$ let
$$d_{\infty}(u,v)=\max\limits_i\{|v_i|^{\frac{1}{\deg X_i}}\}. $$
\end{Definition}

The properties (1), (2) of Definition \ref{km} for the function
$d_{\infty}$ and its continuity on both arguments obviously follow
from properties of the exponential mapping. The generalized
triangle inequality is proved in \cite{karm,vk}.
 We denote the balls w. r. t. $d_{\infty}$ as
$\operatorname{Box}(u,r)=\{v\in {\mathcal U} \mid
d_{\infty}(v,u)<r\}.$

\begin{Definition}\label{dil_cc}
Define in ${\mathcal U}$ the action of the dilation group
$\Delta^g_{\varepsilon}$ as follows: it maps an element
$x=\exp\Bigl(\sum\limits_{i=1}^Nx_iX_i\Bigr)(g)\in {\mathcal U}$
to the element
$$\Delta^g_{\varepsilon}x=\exp\Bigl(\sum\limits_{i=1}^Nx_i\varepsilon^{\deg
X_i}X_i\Bigr)(g)\in {\mathcal U}$$ in the case when the right-hand
part of the last expression makes sense.\end{Definition}

\begin{Proposition}[\rm\cite{vk}]\label{nilpot}
The coefficients
$$
\bar{c}_{ijk}=
\begin{cases} c_{ijk}(g)\text{ of \eqref{tcomm} },& \text{if }\operatorname{deg}
X_i+\operatorname{deg}X_j=\operatorname{deg}X_k \\
0, &\text{in other cases}
\end{cases}
$$
define a graded nilpotent Lie algebra.

This Lie algebra  can be represented by  vector fields
$\{(\widehat{X}_i^g\}_{i=1}^N\in C^{\alpha}$ on ${\mathcal U}$
such that
\begin{equation}\label{tcommnilp}
[\widehat{X}_i^g,\widehat{X}_j^g]=\sum\limits_{\operatorname{deg}
X_k=\operatorname{deg} X_i+\operatorname{deg}
X_j}c_{ijk}(g)\widehat{X}_k^g
\end{equation}
and $\widehat{X}_i^g(g)=X_i(g)$.
\end{Proposition}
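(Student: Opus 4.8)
The plan is to split the statement into two parts and, somewhat counter-intuitively, to establish the second (the realization by vector fields) first and to deduce the purely algebraic assertion from it. The reason is that in the present low-regularity setting the fields $X_i$ are only $C^{1,\alpha}$, so the brackets $[X_i,X_j]$ are merely $C^\alpha$ and the iterated brackets $[[X_i,X_j],X_l]$ entering the Jacobi identity need not be classically defined; hence one cannot simply extract the Jacobi identity for the truncated constants $\bar c_{ijk}$ from that of the fields $X_i$. Instead I would construct genuine smooth model fields $\widehat X_i^g$, verify the commutation relations \eqref{tcommnilp} for them by a scaling argument, and then obtain the Lie algebra axioms for $\bar c_{ijk}$ as a free consequence of the fact that the $\widehat X_i^g$ are honest vector fields. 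Throughout write $d_i=\deg X_i$, so that $1\le d_i\le M$.

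First I would construct $\widehat X_i^g$. Working in the normal coordinates $\theta_g$ of Definition \ref{exp} centred at $g$ (so that $g$ corresponds to the origin and, by construction of $\theta_g$, $X_i(g)=\partial_{x_i}$ there), write $X_i=\sum_m a_{im}(x)\,\partial_{x_m}$ with $a_{im}(0)=\delta_{im}$. Using the dilations $\Delta_\varepsilon^g$ of Definition \ref{dil_cc} I would define $\widehat X_i^g$ as the limit of the rescaled fields $\varepsilon^{d_i}(\Delta_{\varepsilon}^g)^{-1}_*X_i$ as $\varepsilon\to0$; equivalently, $\widehat X_i^g$ is the weighted-homogeneous principal part of $X_i$, of homogeneity degree $-d_i$ with respect to $\Delta_\varepsilon^g$. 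The coefficients of $\widehat X_i^g$ in these coordinates are weighted-homogeneous polynomials, so $\widehat X_i^g$ is $C^\infty$ (in particular $C^\alpha$) on $\mathcal U$; its constant term is $\partial_{x_i}$, whence $\widehat X_i^g(g)=X_i(g)$ because every non-constant polynomial term vanishes at the origin.

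Next I would compute the brackets. By naturality of the Lie bracket under the diffeomorphisms $\Delta_\varepsilon^g$,
\begin{equation*}
[\varepsilon^{d_i}(\Delta_\varepsilon^g)^{-1}_*X_i,\ \varepsilon^{d_j}(\Delta_\varepsilon^g)^{-1}_*X_j]
=\varepsilon^{d_i+d_j}(\Delta_\varepsilon^g)^{-1}_*\Bigl(\sum_k c_{ijk}X_k\Bigr),
\end{equation*}
and letting $\varepsilon\to0$, each summand contributes $(c_{ijk}\circ\Delta_\varepsilon^g)\,\varepsilon^{d_i+d_j-d_k}\,[\varepsilon^{d_k}(\Delta_\varepsilon^g)^{-1}_*X_k]$; here $c_{ijk}\circ\Delta_\varepsilon^g\to c_{ijk}(g)$ and $\varepsilon^{d_k}(\Delta_\varepsilon^g)^{-1}_*X_k\to\widehat X_k^g$, so the terms with $d_k<d_i+d_j$ are suppressed by the positive power of $\varepsilon$, while those with $d_k>d_i+d_j$ are absent by \eqref{tcomm}. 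Only the terms with $d_k=d_i+d_j$ survive, giving $[\widehat X_i^g,\widehat X_j^g]=\sum_k\bar c_{ijk}\widehat X_k^g$, which is exactly \eqref{tcommnilp}. Since the $\widehat X_i^g$ are smooth and linearly independent near $g$ (they equal the basis vectors $X_i(g)$ there), the Jacobi identity and the antisymmetry of $\bar c_{ijk}$ follow from those of the genuine fields $\widehat X_i^g$. The grading is read off from homogeneity: with $V_s=\operatorname{span}\{\widehat X_i^g:d_i=s\}$, the bracket of homogeneous fields of degrees $-s$ and $-t$ is homogeneous of degree $-(s+t)$, so $[V_s,V_t]\subseteq V_{s+t}$, while $V_s=\{0\}$ for $s>M$; nilpotency of step at most $M$ is then immediate, since every iterated bracket of length exceeding $M$ is homogeneous of degree below $-M$ and hence vanishes.

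The main obstacle lies entirely in the analytic step of the second paragraph: proving that the rescaled fields $\varepsilon^{d_i}(\Delta_\varepsilon^g)^{-1}_*X_i$ actually converge, and converge to $C^\alpha$ (here even smooth) limits, when the original fields are only $C^{1,\alpha}$. In the smooth category this is a routine weighted Taylor expansion, but under minimal smoothness it requires the quantitative estimates on the divergence of integral lines of $X_i$ and of the nilpotentized fields $\widehat X_i^g$ developed in \cite{vk} — the same estimates that, as noted in Section \ref{intro}, underlie Axiom (A4) for Carnot--Carath\'eodory spaces. Granting that convergence, the remaining algebra is formal, and the identification $\widehat X_i^g(g)=X_i(g)$ together with \eqref{tcommnilp} completes the proof.
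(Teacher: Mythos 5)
The paper itself offers no proof of Proposition \ref{nilpot}: it is imported verbatim from \cite{vk}, so your attempt can only be measured against the construction used there and reflected in how Section \ref{cc_sec} is organized. Your scaling computation (bracket naturality under $(\Delta_\varepsilon^g)^{-1}_*$ plus weight bookkeeping that suppresses the terms with $\deg X_k<\deg X_i+\deg X_j$) is the classical Gromov--Bellaiche nilpotentization argument, and it is correct in the smooth category. Under the paper's hypotheses, however, it has a genuine gap precisely at the point you flag but do not resolve. For $C^{1,\alpha}$ fields, the existence of $\lim_{\varepsilon\to 0}\varepsilon^{d_i}(\Delta_\varepsilon^g)^{-1}_*X_i$ is not a ``routine weighted Taylor expansion'': the weighted principal part of $X_i$ involves ordinary derivatives of its coefficients up to order $\deg X_i\le M$, which do not exist for $C^{1,\alpha}$ data, so nothing in your argument produces the polynomial limit fields you assert (your conclusion that $\widehat{X}_i^g\in C^\infty$, versus the $C^\alpha$ claimed in the statement, is a symptom of this). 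Worse, your fallback --- invoking the estimates on divergence of integral lines from \cite{vk} to supply the missing convergence --- is circular: those estimates (Theorem \ref{int_lines} in this paper) are formulated in terms of the nilpotentized fields $\widehat{X}^u_i$ and the quasimetric $d_\infty^u$ they define, so they presuppose exactly the objects you are constructing. In \cite{vk} the logical order is the reverse of yours: one first proves that the truncated constants $\bar{c}_{ijk}$ satisfy the Jacobi identity (a nontrivial step under minimal smoothness, since iterated brackets of the $X_i$ are not classically defined), then realizes the resulting graded nilpotent Lie algebra by the fields $\widehat{X}_i^g$ normalized by $\widehat{X}_i^g(g)=X_i(g)$, and only afterwards proves the comparison estimates.

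There is also a second, independent gap: even granting uniform convergence of the rescaled fields, your identity only shows that the brackets of the rescaled fields converge to $\sum_{d_k=d_i+d_j}c_{ijk}(g)\widehat{X}_k^g$. To conclude that this limit equals $[\widehat{X}_i^g,\widehat{X}_j^g]$ you must know that the bracket of the limits is the limit of the brackets, and $C^0$-convergence of vector fields does not yield this --- the bracket involves first derivatives, so one needs local $C^1$-convergence, or at least uniform gradient bounds together with a distributional argument. Controlling derivatives of the rescaled fields is, once again, exactly the hard analytic content of \cite{vk} under minimal smoothness. In short: your plan is a sound heuristic for smooth (or $C^M$) fields, but as written it assumes, in two distinct places, the substance of the result it purports to prove.
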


\begin{Definition}\label{loc_carno} To the Lie algebra
$\{\widehat{X}_i^g\}_{i=1}^N$ there corresponds the Lie group
${\mathcal G}^g=({\mathcal U},g,^{-1},*)$ at $g$. The product
function $*$ is defined as follows: if
$x=\exp\Bigl(\sum\limits_{i=1}^Nx_i\widehat{X}^g_i\Bigr)(g)$,
$y=\exp\Bigl(\sum\limits_{i=1}^Ny_i\widehat{X}^g_i\Bigr)(g)$, then
$x* y=\exp\Bigl(\sum\limits_{i=1}^Ny_i\widehat{X}^g_i\Bigr)\circ
\exp\Bigl(\sum\limits_{i=1}^Nx_i\widehat{X}^g_i\Bigr)(g)
=\exp\Bigl(\sum\limits_{i=1}^Nz_i\widehat{X}^g_i\Bigr)(g)$, where
$z_i$ are computed via Campbell-Hausdorff formula. The inverse
element to
$x=\exp\Bigl(\sum\limits_{i=1}^Nx_i\widehat{X}^g_i\Bigr)(g)$ is
defined as
$x^{-1}=\exp\Bigl(\sum\limits_{i=1}^N(-x_i)\widehat{X}^g_i\Bigr)(g)$.\end{Definition}

\begin{Remark}\label{usl4} In the ``classical''
sub-Riemannian setting (see Introduction), the local Lie group
from Definition
 \ref{loc_carno} is locally isomorphic to a
 {\it Carnot group}, i.e., a connected simply connected Lie group
 the Lie algebra
$V$ of which can be decomposed into a direct sum
$V=V_1\oplus\ldots\oplus V_M$ such that $[V_1,V_i]= V_{i+1},\
i=1,\ldots M-1$, $[V_1,V_M]=\{0\}$. In the case under our
consideration, for the Lie algebra of the local group ${\mathcal
G}^g$ only the inclusion $[V_1,V_i]\subseteq V_{i+1}$ is true. The
converse inclusion will hold if we require  an additional
condition \cite{karm,vk} in Definition \ref{cc}: the quotient
mapping $[\,\cdot ,\cdot\, ]_0:H_1\times H_j/H_{j-1}\mapsto
H_{j+1}/H_{j}$ induced by Lie brackets is an epimorphism for all
$1\leq j<M$. Under this additional assumption, an analog of the
Rashevskii-Chow theorem can be proved.
\end{Remark}

Strictly speaking, the group operation is defined on a
neighborhood defined by vector fields $\{\widehat{X}^g_i\}$, but,
w. l. o. g., we can assume that this neighborhood coincides with
${\mathcal U}$ \cite{vk, vk2}. Note also that the mapping
$\theta_g$ is a local isometric isomorphism between the local Lie
group
 $({\mathcal G}^g,*)$ and the Lie group
$(\mathbb R^N,*)$, and $\theta_g(0)=g$. The group operation $*$ on
$\mathbb R^N$ is introduced by analogy with Definition
\ref{loc_carno}, by means of $C^{\infty}$ vector fields
$\{(\widehat{X}^g_i)^{\prime}\}$ on $\mathbb R^N$, such that
$\widehat{X}_i^g=(\theta_g)_{*}(\widehat{X}_i^g)^{\prime}$, where
$(\theta_g)_{*}\langle {Y}\rangle$$(\theta_g(x))=D
\theta_g(x)\langle {Y}(x)\rangle$, ${Y}\in T\mathbb R^N$ (see
details in \cite{karm,vk,vk2}). In what follows, we will identify
the neighborhood ${\mathcal U}$ with its image
$\theta_g^{-1}({\mathcal U})\subseteq\mathbb R^N$.

This identification allows, in particular, to define canonical
coordinates of the first kind, induced by the nilpotentized vector
fields in a similar way as \ref{exp}.

\begin{Definition}\label{d_g} For $u,v\in \mathbb R^N$ such that
$v=\exp\Bigl(\sum\limits_{i=1}^Nv_i(\widehat{X}^g_i)^{\prime}\Bigr)(u)$,
let $d_{\infty}^g(u,v)=\max\limits_i\{|v_i|^{\frac{1}{\deg
X_i}}\}.$\end{Definition}

It is known \cite{fs}  that $d_{\infty}^g$ is a quasimetric. We
denote the balls w. r. t. this quasimetric as
$\operatorname{Box}^g(u,r)=\{v\in \mathbb R^N\mid
d_{\infty}^g(v,u)<r\}.$

\begin{Proposition}[\rm \cite{vk,v}]\label{box} If $r$ is such that
$\operatorname{Box}(g,r)\subseteq{\mathcal U}$ then
$\operatorname{Box}(g,r)=\operatorname{Box}^g(g,r)$.\end{Proposition}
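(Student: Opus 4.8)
The plan is to read everything in the canonical coordinates of the first kind attached to $X_1,\dots,X_N$ and to show that there both balls become the same ``standard box''. Using the identification of $\mathcal U$ with $\theta_g^{-1}(\mathcal U)$, denote by $w=(w_1,\dots,w_N)$ the coordinates of a point, so that $g$ corresponds to $0$ and, by Definition \ref{exp}, $v=\exp(\sum_i w_iX_i)(g)$; write $\tilde X_i=(\theta_g^{-1})_*X_i$ for the original fields read in this chart. First I would note that $d_\infty$ is symmetric: if $v=\exp(\sum_i a_iX_i)(u)$ then reversing the flow of the single field $\sum_i a_iX_i$ gives $u=\exp(-\sum_i a_iX_i)(v)$, so $d_\infty(v,u)=\max_i|a_i|^{1/\deg X_i}=d_\infty(u,v)$, and the identical argument applies to $d_\infty^g$. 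Hence $\operatorname{Box}(g,r)=\{v:d_\infty(g,v)<r\}$ is exactly the standard box $B_r=\{w:\max_i|w_i|^{1/\deg X_i}<r\}$, and the whole statement reduces to showing that $\operatorname{Box}^g(g,r)$ is the same $B_r$; equivalently, that the canonical coordinates of the first kind of the nilpotent fields $(\widehat X_i^g)'$ coincide with $w$, i.e. $\exp(\sum_i w_i(\widehat X_i^g)')(0)=w$ for every $w\in B_r$ (the hypothesis $\operatorname{Box}(g,r)\subseteq\mathcal U$ keeping all flows inside the chart).

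The key tool is a radial (Euler) identity. Because $w$ are canonical coordinates of the first kind for the $\tilde X_i$, the curve $s\mapsto\exp(s\sum_i a_iX_i)(g)$ is the straight segment $s\mapsto sa$; differentiating at $s=1$ gives $\sum_i a_i\tilde X_i(a)=a$, i.e. (writing $w=a$) $\sum_i w_i\tilde X_i(w)=\sum_j w_j\partial_{w_j}=:R(w)$, the ordinary Euler field. Now I would bring in the grading: under the anisotropic dilations $\delta_\varepsilon:w_j\mapsto\varepsilon^{\deg X_j}w_j$ (which by Definition \ref{dil_cc} are just $\Delta_\varepsilon^g$ read in these coordinates) the field $R$ is homogeneous of degree $0$, whereas $(\widehat X_i^g)'$ is the weighted-homogeneous principal part of $\tilde X_i$, of degree $-\deg X_i$. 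Therefore $w_i(\widehat X_i^g)'$ is precisely the degree-$0$ component of $w_i\tilde X_i$, and isolating the degree-$0$ part of $\sum_i w_i\tilde X_i=R$ produces the same identity for the nilpotent fields, $\sum_i w_i(\widehat X_i^g)'(w)=R(w)$.

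This closes the reduction at once. Along $\gamma(t)=tw$ one has $\dot\gamma(t)=w$, while the radial identity evaluated at $tw$ gives $\sum_i w_i(\widehat X_i^g)'(tw)=t^{-1}R(tw)=w$; thus $\gamma$ is the integral curve of $\sum_i w_i(\widehat X_i^g)'$ issuing from $0$, and $\exp(\sum_i w_i(\widehat X_i^g)')(0)=\gamma(1)=w$. Hence the canonical coordinates of the first kind of the nilpotent structure are the coordinates $w$ themselves, and by the symmetry of $d_\infty^g$ we obtain $\operatorname{Box}^g(g,r)=\{w:\max_i|w_i|^{1/\deg X_i}<r\}=B_r=\operatorname{Box}(g,r)$.

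The hard part will be the step that identifies $(\widehat X_i^g)'$ with the weighted-homogeneous principal part of $\tilde X_i$ in the canonical chart, i.e. reconciling the abstract nilpotentized algebra of Proposition \ref{nilpot} (realized as vector fields in \cite{vk}) with the coordinate ``leading term''. This is precisely where the choice of representative matters: any fields with the correct bracket relations and the correct values $\widehat X_i^g(g)=X_i(g)$ form the same abstract algebra, but only the principal-part representative makes the two exponential maps coincide, and hence only for it do the balls agree exactly rather than merely to leading order. A secondary technical point is controlling the homogeneous decomposition under the mere $C^{1,\alpha}$ regularity of the $X_i$, which bounds the available expansion but still suffices to extract the single principal part used above.
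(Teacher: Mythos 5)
First, a point of reference: the paper never proves Proposition \ref{box}; it is imported from \cite{vk,v}. So your argument must be measured against the construction of the nilpotentized fields actually used there, not against an internal proof. Much of your skeleton is sound: the symmetry of $d_\infty$ and $d_\infty^g$ (reversing the flow of the single field $\sum_i a_iX_i$) correctly removes the asymmetry in the definition of $\operatorname{Box}$; the Euler identity $\sum_i w_i\tilde X_i(w)=R(w)$ in canonical coordinates of the first kind is exact and correctly derived; and granting the radial identity $\sum_i w_i(\widehat X_i^g)'(w)=R(w)$, your integral-curve argument does give $\exp\bigl(\sum_i w_i(\widehat X_i^g)'\bigr)(0)=w$ and hence the equality of the boxes. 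You are also right that Proposition \ref{nilpot} alone (brackets plus the values at $g$) cannot suffice, since the statement is false for a generic representative of that algebra.

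The genuine gap is the step you dismiss as a ``secondary technical point'': identifying $(\widehat X_i^g)'$ with the weighted-homogeneous Taylor principal part of $\tilde X_i$. Under the paper's standing hypothesis $X_i\in C^{1,\alpha}$ this identification does not exist once the depth satisfies $M\geq 3$: the principal part of a field of degree $\deg X_i$ has $\partial_{w_j}$-coefficients that are weighted-homogeneous polynomials of weighted degree $\deg X_j-\deg X_i$, whose ordinary polynomial degree can reach $M-1$; extracting such terms requires a Taylor expansion to order $M-1$, while only one derivative plus a H\"older remainder $O(|w|^{1+\alpha})$ is available (and $M-1\geq 2>1+\alpha$). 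This is precisely why the minimal-smoothness theory of \cite{vk} never defines $\widehat X_i^g$ as a principal part: the fields are built from the structure constants $\bar c_{ijk}$ at $g$, and the comparison with the original fields is made at the level of flows (Theorem \ref{int_lines}), not coefficients. So your degree-$0$ extraction has no decomposition to extract from. The repair is in fact shorter than your detour: as the paper records after Remark \ref{usl4}, the fields $(\widehat X_i^g)'$ are by construction the invariant vector fields of the nilpotent group $(\mathbb R^N,*)$ of Definition \ref{loc_carno}, and under the identification of $\mathcal U$ with $\theta_g^{-1}(\mathcal U)$ the linear coordinates $w$ are exponential coordinates of the first kind for that group, with $g\leftrightarrow 0$. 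On any Lie group in such coordinates the one-parameter subgroups are the rays $t\mapsto tw$, which is exactly your radial identity $\exp\bigl(\sum_i w_i(\widehat X_i^g)'\bigr)(0)=w$, obtained with no principal-part decomposition; combined with your symmetry observation it gives $d_\infty^g(g,\cdot)=d_\infty(g,\cdot)$ and hence $\operatorname{Box}(g,r)=\operatorname{Box}^g(g,r)$.
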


\begin{Definition}\label{dil_cc_nilp}
The nilpotentized vector fields also define dilations on
${\mathcal U}$: the element
$x=\exp\Bigl(\sum\limits_{i=1}^Nx_i\widehat{X}^g_i\Bigr)(g)\in
{\mathcal U}$ is mapped to the element
$$\delta^g_{g,\varepsilon}x=\exp\Bigl(\sum\limits_{i=1}^Nx_i\varepsilon^{\deg
X_i}\widehat{X}^g_i\Bigr)(g)\in {\mathcal U}$$ in the case when
the right-hand part of the last expression makes sense.
\end{Definition}

\begin{Proposition}[\rm \cite{vk,v}]\label{del_eq}
For all $\varepsilon>0$ and $u\in {\mathcal U}$, we have
$\Delta^g_{\varepsilon}u=\delta^g_{g,\varepsilon}u$, if both parts
of this equality are defined.
\end{Proposition}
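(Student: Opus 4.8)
The plan is to compute the two dilations in coordinates of the first kind attached to the nilpotentized frame and thereby reduce the identity to a single homogeneity property of the vector fields $\widehat X^g_i$. First I would fix $u\in\mathcal U$ and write it in these coordinates, $u=\exp\bigl(\sum_{i=1}^N x_i\widehat X^g_i\bigr)(g)$, so that by Definition \ref{dil_cc_nilp} one has $\delta^g_{g,\varepsilon}u=\exp\bigl(\sum_{i=1}^N \varepsilon^{\deg X_i}x_i\widehat X^g_i\bigr)(g)$. The whole statement then amounts to showing that $\Delta^g_\varepsilon$ acts on such an exponential exactly by scaling each coefficient $x_i$ by $\varepsilon^{\deg X_i}$. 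To exploit the structure of $\Delta^g_\varepsilon$ I would pass to the normal coordinates of Definition \ref{exp} (identifying $\mathcal U$ with $\theta_g^{-1}(\mathcal U)\subseteq\mathbb R^N$ and $g$ with $0$), in which, by Definition \ref{dil_cc}, the map $\Delta^g_\varepsilon$ is literally the weighted coordinate dilation $x\mapsto(\varepsilon^{\deg X_1}x_1,\dots,\varepsilon^{\deg X_N}x_N)$; in particular $\Delta^g_\varepsilon$ is a $C^1$-diffeomorphism fixing $g$.

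The key step, which I expect to be the main obstacle, is the homogeneity identity
$$(\Delta^g_\varepsilon)_*\widehat X^g_i=\varepsilon^{\deg X_i}\,\widehat X^g_i,\qquad i=1,\dots,N.\qquad(\star)$$
This is the precise reflection, at the level of vector fields, of the grading of the nilpotent Lie algebra recorded in \eqref{tcommnilp} of Proposition \ref{nilpot}: in the normal coordinates the pushed-forward fields $(\widehat X^g_i)'$ admit an explicit form that is homogeneous of degree $\deg X_i$ with respect to the weighted dilation, which is exactly how the nilpotentization is built in \cite{vk,v}. I would therefore invoke $(\star)$ from the construction of the $\widehat X^g_i$, alternatively deriving it from the explicit homogeneous expression of $(\widehat X^g_i)'$ in the chart together with $\widehat X^g_i(g)=X_i(g)$ from Proposition \ref{nilpot}. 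Checking $(\star)$ carefully under the low regularity $\widehat X^g_i\in C^\alpha$ is the only delicate point; everything else is formal.

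Granting $(\star)$, the conclusion is a one-line flow computation. Since $\Delta^g_\varepsilon$ is a diffeomorphism fixing $g$, conjugation transforms the flow of a field by its pushforward, so with $V=\sum_{i=1}^N x_i\widehat X^g_i$ one gets
$$\Delta^g_\varepsilon u=\Delta^g_\varepsilon\bigl(\exp(V)(g)\bigr)=\exp\bigl((\Delta^g_\varepsilon)_*V\bigr)(g)=\exp\Bigl(\sum_{i=1}^N\varepsilon^{\deg X_i}x_i\widehat X^g_i\Bigr)(g)=\delta^g_{g,\varepsilon}u,$$
where the third equality uses $(\star)$ together with linearity of the pushforward and the last one is Definition \ref{dil_cc_nilp}. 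Restricting throughout to those $u$ and $\varepsilon$ for which both sides are defined, as in the statement, completes the argument; Proposition \ref{box} may additionally be used to verify that the common domain of definition is nonempty and that the corresponding balls match.
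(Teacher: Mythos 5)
The first thing to note is that the paper contains no proof of Proposition \ref{del_eq} to compare against: it is stated with the attribution \cite{vk,v} and imported as a known fact, so your proposal can only be judged against the construction carried out in those references. On its merits, your argument is sound and is essentially the standard one: by Definition \ref{dil_cc_nilp} the map $\delta^g_{g,\varepsilon}$ scales the coefficients in the $\widehat X^g_i$-exponential representation; by Definition \ref{dil_cc} the map $\Delta^g_\varepsilon$ is the weighted coordinate scaling in the normal coordinates $\theta_g^{-1}$; and the flow-conjugation identity $\Delta^g_\varepsilon\bigl(\exp(V)(g)\bigr)=\exp\bigl((\Delta^g_\varepsilon)_*V\bigr)(g)$ (using $\Delta^g_\varepsilon g=g$) reduces the whole statement to the homogeneity $(\star)$. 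Your regularity worry is also harmless: since $\widehat X_i^g=(\theta_g)_{*}(\widehat{X}_i^g)^{\prime}$ with $(\widehat{X}_i^g)^{\prime}\in C^\infty(\mathbb R^N)$ and $\theta_g$ a $C^1$-diffeomorphism, all flows involved are well defined and unique.

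One caveat concerns where $(\star)$ actually comes from. It is \emph{not} a reflection of the grading \eqref{tcommnilp}: the properties recorded in Proposition \ref{nilpot} --- the bracket relations \eqref{tcommnilp} together with $\widehat X_i^g(g)=X_i(g)$ --- cannot by themselves imply $(\star)$, because both are preserved when the fields are pushed forward by an arbitrary diffeomorphism fixing $g$ with identity differential at $g$, whereas $(\star)$ is destroyed by a generic such diffeomorphism (already on the Heisenberg group). What the grading gives for free is homogeneity with respect to the \emph{group} dilations $\delta^g_{g,\varepsilon}$; homogeneity with respect to $\Delta^g_\varepsilon$, i.e.\ with respect to scalings of the $X$-normal coordinates, is precisely the nontrivial content of Proposition \ref{del_eq} and must be taken from the actual construction in \cite{vk,v}, where $\widehat X^g_i$ is the weighted-homogeneous part of $X_i$ in the chart $\theta_g^{-1}$, equivalently $\widehat X^g_i=\lim_{\varepsilon\to 0}\varepsilon^{\deg X_i}\bigl(\Delta^g_{\varepsilon^{-1}}\bigr)_*X_i$, from which $(\star)$ follows using $\Delta^g_\mu\circ\Delta^g_{1/\varepsilon}=\Delta^g_{\mu/\varepsilon}$. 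Since in the end you do invoke the construction rather than the abstract properties, your proof stands; just be aware that the first sentence of your key-step paragraph points at the wrong source of the homogeneity, and that relying only on what this paper states in Proposition \ref{nilpot} would make the argument circular.
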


\begin{Proposition} [\rm\cite{fs,vk,vk2}]
\label{cone_prop_cc} The cone property for the quasimetric
$d_{\infty}^g(u,v)$ holds:
 $d_{\infty}^g(u,v)= \frac{1}{\varepsilon}
d_{\infty}^g(\Delta^g_{\varepsilon}u, \Delta^g_{\varepsilon}v)$
for all possible $\varepsilon>0$.
\end{Proposition}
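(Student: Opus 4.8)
The plan is to recognize $d_\infty^g$ as the homogeneous quasimetric of the graded nilpotent Lie group of Definition \ref{loc_carno} (transported to $\mathbb R^N$ by the isometric isomorphism $\theta_g$), for which the claimed scaling is exactly a reflection of the grading. First I would use Proposition \ref{del_eq} to replace $\Delta^g_\varepsilon$ by the nilpotent dilation $\delta^g_{g,\varepsilon}$ of Definition \ref{dil_cc_nilp}, since the latter is the scaling naturally attached to the canonical coordinates (induced by $(\widehat X^g_i)'$) that define $d_\infty^g$ in Definition \ref{d_g}. The whole argument then reduces to two structural facts: $(i)$ $d_\infty^g$ is left-invariant under the product $*$, and $(ii)$ $\delta^g_{g,\varepsilon}$ is a group automorphism of $(\mathbb R^N,*)$.

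Fact $(i)$ is essentially built into the construction: by Definition \ref{loc_carno} the relation $v=\exp\bigl(\sum_i v_i(\widehat X^g_i)'\bigr)(u)$ is the same as $v=u*y$ with $y=\exp\bigl(\sum_i v_i(\widehat X^g_i)'\bigr)(g)$, so the canonical coordinates of $v$ based at $u$ are precisely those of $u^{-1}*v$ based at $g$; hence $d_\infty^g(u,v)=d_\infty^g(g,u^{-1}*v)$. For fact $(ii)$ I would pass to the Lie algebra and check that the linear map $D_\varepsilon$ sending each $(\widehat X^g_i)'$ to $\varepsilon^{\deg X_i}(\widehat X^g_i)'$ is a Lie-algebra automorphism: by the graded commutation relations \eqref{tcommnilp} (Proposition \ref{nilpot}) the structure constants vanish unless $\deg X_k=\deg X_i+\deg X_j$, whence $D_\varepsilon[(\widehat X^g_i)',(\widehat X^g_j)']=\varepsilon^{\deg X_i+\deg X_j}[(\widehat X^g_i)',(\widehat X^g_j)']=[D_\varepsilon(\widehat X^g_i)',D_\varepsilon(\widehat X^g_j)']$. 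Since for a simply connected nilpotent group $\exp$ carries Lie-algebra automorphisms to group automorphisms, $\delta^g_{g,\varepsilon}=\exp\circ D_\varepsilon\circ\exp^{-1}$ is an automorphism of $(\mathbb R^N,*)$; in particular $\delta^g_{g,\varepsilon}(u^{-1}*v)=(\delta^g_{g,\varepsilon}u)^{-1}*\delta^g_{g,\varepsilon}v$.

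Combining these, the computation is immediate:
\begin{align*}
d_\infty^g(\delta^g_{g,\varepsilon}u,\delta^g_{g,\varepsilon}v)
&=d_\infty^g\bigl(g,(\delta^g_{g,\varepsilon}u)^{-1}*\delta^g_{g,\varepsilon}v\bigr)
=d_\infty^g\bigl(g,\delta^g_{g,\varepsilon}(u^{-1}*v)\bigr)\\
&=\varepsilon\,d_\infty^g(g,u^{-1}*v)=\varepsilon\,d_\infty^g(u,v),
\end{align*}
where the third equality uses that if $w=u^{-1}*v$ has coordinates $w_i$ then $\delta^g_{g,\varepsilon}w$ has coordinates $\varepsilon^{\deg X_i}w_i$, so $\max_i|\varepsilon^{\deg X_i}w_i|^{1/\deg X_i}=\varepsilon\max_i|w_i|^{1/\deg X_i}$. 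Dividing by $\varepsilon$ and recalling $\Delta^g_\varepsilon=\delta^g_{g,\varepsilon}$ gives the cone property; the phrase ``for all possible $\varepsilon$'' only records that one must stay where the maps are defined, which on the global model $(\mathbb R^N,*)$ is no restriction. I expect the single genuine obstacle to be fact $(ii)$: it is the only point where nilpotency and, decisively, the grading enter, and it also explains why one cannot use the original fields $X_i$ — their commutators \eqref{tcomm} carry lower-order terms that would break the exact homogeneity $D_\varepsilon[X_i,X_j]=\varepsilon^{\deg X_i+\deg X_j}[X_i,X_j]$.
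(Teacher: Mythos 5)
The paper itself offers no proof of Proposition \ref{cone_prop_cc}: it is imported verbatim from \cite{fs,vk,vk2}, so there is no internal argument to compare yours against. Your proof is correct, and it is in substance the standard homogeneous-group argument that those references rely on: reduce $\Delta^g_\varepsilon$ to $\delta^g_{g,\varepsilon}$ via Proposition \ref{del_eq}, note that the product convention of Definition \ref{loc_carno} gives $d_\infty^g(u,v)=d_\infty^g\bigl(g,u^{-1}*v\bigr)$ (left-invariance), use the graded relations \eqref{tcommnilp} to see that $D_\varepsilon$ is a Lie-algebra automorphism and hence $\delta^g_{g,\varepsilon}=\exp\circ D_\varepsilon\circ\exp^{-1}$ is a group automorphism of $(\mathbb R^N,*)$, and finish with the evident homogeneity of the quasi-norm in canonical coordinates. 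Each of these steps checks out against the paper's definitions, and your closing observation is also the right one: the grading is exactly what fails for the original fields $X_i$, whose commutators \eqref{tcomm} contain lower-order terms, which is why $d_\infty$ itself has only an approximate cone property (Theorem \ref{lat}) while $d_\infty^g$ has an exact one.

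Two points deserve emphasis. First, your reduction step, Proposition \ref{del_eq}, is itself a nontrivial cited fact (it identifies flows of the original fields with flows of the nilpotentized ones); your proof correctly isolates all the genuinely sub-Riemannian content there and exposes the remainder as pure group theory. Second, when invoking Proposition \ref{del_eq} you implicitly need that whenever $\Delta^g_\varepsilon u$ is defined so is $\delta^g_{g,\varepsilon}u$; this is harmless because $\delta^g_{g,\varepsilon}$ extends to the global model $(\mathbb R^N,*)$, which is precisely how you read the phrase ``for all possible $\varepsilon>0$'', but it is worth stating explicitly since Proposition \ref{del_eq} is phrased with the hypothesis that both sides be defined.
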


\begin{Theorem}[\rm Estimate on divergence of integral
lines \cite{karm,vk}]\label{int_lines} Consider points $u,v\in
\mathcal U$ and
$$
w_{\varepsilon}=\exp\Bigl(\sum\limits_{i=1}^Nw_i\varepsilon^{\operatorname{deg}
X_i}X_i\Bigr)(v) \text{ and }
\widehat{w}_{\varepsilon}=\exp\Bigl(\sum\limits_{i=1}^Nw_i\varepsilon^{\operatorname{deg}
X_i}\widehat{X}^{u}_i\Bigr)(v).
$$
Then
\begin{equation}\label{locgeomest}
\max\{d_{\infty}^u(w_{\varepsilon},\widehat{w}_{\varepsilon}),
d_{\infty}^{u^{\prime}}(w_{\varepsilon},\widehat{w}_{\varepsilon})\}=\varepsilon
[\Theta(u,v,\alpha,M)]\rho(u,v)^{\frac{\alpha}{M}},
\end{equation}
where $\Theta$ is uniformly bounded on $u,v\in{\mathcal U}.$

\end{Theorem}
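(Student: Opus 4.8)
The plan is to realize both $w_\varepsilon$ and $\widehat w_\varepsilon$ as the time-one endpoints of integral curves issuing from the common point $v$, and to estimate their divergence by a Gronwall-type comparison carried out in the anisotropic (graded) coordinates. Setting $F_\varepsilon=\sum_{i=1}^N w_i\varepsilon^{\deg X_i}X_i$ and $\widehat F_\varepsilon=\sum_{i=1}^N w_i\varepsilon^{\deg X_i}\widehat X_i^{\,u}$, the curves $\gamma(t)=\exp(tF_\varepsilon)(v)$ and $\widehat\gamma(t)=\exp(t\widehat F_\varepsilon)(v)$ satisfy $\gamma(0)=\widehat\gamma(0)=v$, $\gamma(1)=w_\varepsilon$, $\widehat\gamma(1)=\widehat w_\varepsilon$, and solve $\dot\gamma=F_\varepsilon(\gamma)$, $\dot{\widehat\gamma}=\widehat F_\varepsilon(\widehat\gamma)$. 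Hence in local coordinates the difference $e(t)=\gamma(t)-\widehat\gamma(t)$ obeys $\dot e(t)=\big(F_\varepsilon(\gamma)-F_\varepsilon(\widehat\gamma)\big)+\big(F_\varepsilon(\widehat\gamma)-\widehat F_\varepsilon(\widehat\gamma)\big)$, which splits the growth of $e$ into a \emph{state} term (one field evaluated at two nearby points) and a \emph{source} term (the two fields at the same point).

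First I would control the state term. After the homogeneous rescaling $\delta^u_{u,\varepsilon^{-1}}$ associated with $d_\infty^u$ (which, by Proposition \ref{del_eq}, coincides with $\Delta^u_{\varepsilon^{-1}}$), the weighted field $F_\varepsilon$ becomes a field of uniformly bounded Lipschitz constant: the weights $\varepsilon^{\deg X_i}$ on the $X_i$ exactly compensate the weights with which the corresponding coordinate of $e$ is measured in the homogeneous norm. Consequently, over the unit time interval the Gronwall amplification factor $\exp\!\big(\int_0^1 L_\varepsilon\,dt\big)$ is bounded uniformly in $\varepsilon$, so only the accumulated source term survives in the final estimate.

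Next I would estimate the source term $F_\varepsilon-\widehat F_\varepsilon=\sum_i w_i\varepsilon^{\deg X_i}\big(X_i-\widehat X_i^{\,u}\big)$ along $\widehat\gamma$, which stays within $d_\infty^u$-distance $O(\varepsilon)$ of $v$. By construction (Proposition \ref{nilpot}) the nilpotentized field $\widehat X_i^{\,u}$ matches $X_i$ at $u$ together with the graded part of its structure constants, so $X_i-\widehat X_i^{\,u}$ vanishes at $u$, and the $C^{1,\alpha}$-regularity of the $X_i$ makes $|X_i-\widehat X_i^{\,u}|$ near $v$ of Hölder order $\alpha$ in the homogeneous distance $d_\infty^u(u,v)$; the ball--box inequality $d_\infty^u(u,v)\le C\rho(u,v)^{1/M}$ then converts this into the power $\rho(u,v)^{\alpha/M}$, while the displacement scale $\varepsilon$ of the curves (cone property, Proposition \ref{cone_prop_cc}) supplies the prefactor $\varepsilon$. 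Feeding both terms into the anisotropic Gronwall inequality and reading off $|e(1)|$ in the homogeneous norm yields $d_\infty^u(w_\varepsilon,\widehat w_\varepsilon)\le \varepsilon\,\Theta\,\rho(u,v)^{\alpha/M}$ with $\Theta$ bounded uniformly on $\mathcal U$ in terms of the $C^{1,\alpha}$-data of the $X_i$ and $\max_i|w_i|$; the identical computation for the nilpotentization based at the second point $u'$ gives the matching bound, so the maximum in \eqref{locgeomest} changes nothing.

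The main obstacle is the genuinely \emph{anisotropic} character of the comparison: since coordinates of different degree are dilated by different powers of $\varepsilon$, a naive isotropic Gronwall estimate loses the sharp exponent, and the comparison must be run degree by degree in the weighted norm in which $F_\varepsilon$ is contractive. Compounding this is the low regularity — only $C^{1,\alpha}$, hence only first-order Taylor data plus a Hölder remainder for the $X_i$ are available — which forces the error to be measured in the exponent $\alpha$ and makes the passage from the remainder $\rho(u,v)^{\alpha}$ to the homogeneous quantity $\rho(u,v)^{\alpha/M}$, through the deepest ($\deg=M$) layer of the grading, the delicate point on which the whole estimate hinges.
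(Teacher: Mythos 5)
The first thing to note is that the paper does not prove Theorem \ref{int_lines} at all: it is quoted, with attribution, from \cite{karm,vk}, and is used in Section \ref{cc_sec} only as an input to the proof of Theorem \ref{cc_teor}. So your sketch can only be measured against the proofs in those references; in spirit it does reproduce their shape (comparison of the two flows issuing from $v$, written in the graded coordinates centered at $u$, with $X_i-\widehat X_i^{\,u}$ as the source term). But as a proof it has genuine gaps, and they sit exactly at the two steps you assert rather than establish.

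First, the claim that the push-forward of $F_\varepsilon$ under $\Delta^u_{\varepsilon^{-1}}$ is ``a field of uniformly bounded Lipschitz constant'' is not a consequence of $C^{1,\alpha}$-smoothness plus the choice of weights: in graded coordinates the dilation multiplies the $j$-th component of $\varepsilon^{\deg X_i}X_i$ by $\varepsilon^{\deg X_i-\deg X_j}$, which blows up for $\deg X_j>\deg X_i$ unless that component vanishes at $u$ to weighted order $\deg X_j-\deg X_i$. That vanishing is precisely the structural expansion of $X_i$ in terms of the nilpotentized fields, a theorem which uses the commutator condition \eqref{tcomm} and not merely smoothness; and even granting it, the resulting bounds are uniform only on compact neighborhoods of the dilation center $u$, whereas your rescaled curves start at $\Delta^u_{\varepsilon^{-1}}v$, whose homogeneous distance from $u$ is of order $\varepsilon^{-1}d_\infty(u,v)$ and hence leaves every compact set when $\rho(u,v)$ is of order $1$ --- a regime the theorem must cover, since $\Theta$ is required to be bounded uniformly for all $u,v\in\mathcal U$. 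Second, your source-term bookkeeping does not produce the prefactor $\varepsilon$. A sup-norm bound $|X_i-\widehat X_i^{\,u}|=O(\rho(u,v)^{\alpha})$ fed into a scalar Gronwall inequality yields a Euclidean divergence $O(\varepsilon\rho(u,v)^{\alpha})$ at time one; converting Euclidean into homogeneous distance costs the root $1/M$, leaving only $d_\infty^u(w_\varepsilon,\widehat w_\varepsilon)=O\bigl(\varepsilon^{1/M}\rho(u,v)^{\alpha/M}\bigr)$, which is far weaker than \eqref{locgeomest}. To get the sharp factor $\varepsilon$ one must prove that the $j$-th graded coordinate of the divergence is $O(\varepsilon^{\deg X_j}\cdot\,\ldots)$ for \emph{every} $j$, by an induction over the layers that couples the polynomial homogeneity of the $\widehat X_i^{\,u}$ with the weighted vanishing orders of the error coefficients; this coordinate-by-coordinate induction is the actual content of the proof in \cite{vk}, and your text names it (``run degree by degree'') without supplying the weighted differential inequalities or the reason the induction closes. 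Relatedly, your intermediate claim that the ball--box inequality converts the H\"older bound into $\rho(u,v)^{\alpha/M}$ points at the wrong mechanism: as your own closing paragraph correctly says, the exponent $\alpha/M$ arises from taking the $1/M$-th root of the deepest-layer coordinate error, not from comparing $d_\infty^u$ with $\rho$.
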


\begin{Theorem} [\rm Local approximation theorem
\cite{bel,gre,gro,karm,vk,vk1}] \label{lat} If $u,v\in
\operatorname{Box}(g,\varepsilon)$, then
$\left|d_{\infty}(u,v)-d_{\infty}^g(u,v)\right|=O(\varepsilon^{1+\frac{\alpha}{M}})$
uniformly on $g\in {\mathcal U},\ u,v\in
\operatorname{Box}(g,\varepsilon)$.
\end{Theorem}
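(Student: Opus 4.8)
The plan is to compare the two quasimetrics through an intermediate \emph{nilpotentized point}, controlling their discrepancy by the divergence-of-integral-lines estimate (Theorem \ref{int_lines}). Fix $g\in\mathcal U$ and $u,v\in\operatorname{Box}(g,\varepsilon)$, and write $v=\exp\bigl(\sum_{i=1}^N v_iX_i\bigr)(u)$ in coordinates of the first kind based at $u$. The generalized triangle inequality for $d_{\infty}$ gives $d_{\infty}(u,v)=O(\varepsilon)$, whence $|v_i|=O(\varepsilon^{\deg X_i})$, i.e. $v_i=w_i\varepsilon^{\deg X_i}$ with the $w_i$ uniformly bounded. Alongside $v$ I introduce the point $\widehat v=\exp\bigl(\sum_{i=1}^N v_i\widehat X_i^g\bigr)(u)$, obtained by flowing from $u$ along the \emph{nilpotentized} fields with the \emph{same} coordinates. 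The key elementary observation is that, since both $d_{\infty}$ and $d_{\infty}^g$ are defined as $\max_i|\cdot|^{1/\deg X_i}$ of the exponential coordinates with respect to $X_i$ and $\widehat X_i^g$ respectively, one has the exact identity $d_{\infty}(u,v)=d_{\infty}^g(u,\widehat v)=\max_i|v_i|^{1/\deg X_i}$. Hence $|d_{\infty}(u,v)-d_{\infty}^g(u,v)|=|d_{\infty}^g(u,\widehat v)-d_{\infty}^g(u,v)|$, and the whole problem reduces to estimating, inside the single quasimetric $d_{\infty}^g$, how much the value changes when $\widehat v$ is replaced by $v$.

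Next I would invoke Theorem \ref{int_lines} with the nilpotentization base point taken to be $g$ and the flow starting point taken to be $u$: then $w_\varepsilon=v$ and $\widehat w_\varepsilon=\widehat v$ in its notation, and the estimate yields $d_{\infty}^g(v,\widehat v)\le\varepsilon\,\Theta\,\rho(g,u)^{\alpha/M}$ with $\Theta$ uniformly bounded on $\mathcal U$. Since $u\in\operatorname{Box}(g,\varepsilon)$ forces, via the coordinates of the first kind, $\rho(g,u)=O(\varepsilon)$, this gives $d_{\infty}^g(v,\widehat v)=O(\varepsilon^{1+\alpha/M})$, uniformly in $g$ and in $u,v\in\operatorname{Box}(g,\varepsilon)$. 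Thus $v$ and $\widehat v$ are $O(\varepsilon^{1+\alpha/M})$-close in $d_{\infty}^g$, while both lie at $d_{\infty}^g$-distance $O(\varepsilon)$ from $u$.

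The hard part is the final transfer: passing from the bound on $d_{\infty}^g(v,\widehat v)$ to a bound on the \emph{difference} $|d_{\infty}^g(u,\widehat v)-d_{\infty}^g(u,v)|$. Because $d_{\infty}^g$ is only a quasimetric (triangle constant $Q_{\mathbb X}>1$) and its homogeneous norm is merely quasi-subadditive (Proposition \ref{odnor}), the naive reverse triangle inequality leaves an uncontrolled term of order $\varepsilon$, and a crude appeal to continuity of $d_{\infty}^g$ only produces a H\"older modulus that degrades the exponent $\alpha/M$. To recover the sharp rate I would argue coordinatewise in the group $\mathcal G^g$. Using the left invariance of $d_{\infty}^g$, write $d_{\infty}^g(u,v)=d_{\infty}^g(g,P)$ and $d_{\infty}^g(u,\widehat v)=d_{\infty}^g(g,P*r)$, where $P=u^{-1}*v$ and $r=v^{-1}*\widehat v$ (so that $u^{-1}*\widehat v=P*r$); here $|P_k|=O(\varepsilon^{\deg X_k})$ and $d_{\infty}^g(g,r)=d_{\infty}^g(v,\widehat v)=O(\varepsilon^{1+\alpha/M})$. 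Expanding $P*r$ by the Campbell--Hausdorff formula (Definition \ref{loc_carno}) and using the grading $[V_s,V_t]\subseteq V_{s+t}$, every mixed term carries at least one factor from $r$, so $|(P*r)_k-P_k|=O(\varepsilon^{\deg X_k+\alpha/M})$ for each $k$; combined with the mean value estimate for $t\mapsto|t|^{1/\deg X_k}$ this gives $|d_{\infty}^g(g,P*r)-d_{\infty}^g(g,P)|=O(\varepsilon^{1+\alpha/M})$, which is the theorem. The genuinely delicate point, which I expect to be the main obstacle, is the case of a high-degree coordinate $P_k$ that is anomalously small: there $t\mapsto|t|^{1/\deg X_k}$ is only H\"older and the \emph{scalar} bound on $d_{\infty}^g(v,\widehat v)$ is insufficient, so one must use the sharper, coordinatewise form of the divergence estimate underlying Theorem \ref{int_lines} to keep the exponent at $\alpha/M$ rather than $\alpha/M^2$.
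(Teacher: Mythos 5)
First, a point of reference: the paper itself does not prove Theorem \ref{lat} --- it is imported from \cite{bel,gre,gro,karm,vk,vk1} and used as a black box in the proof of Theorem \ref{cc_teor} --- so there is no internal proof to compare against; the comparison can only be with the cited sources, whose arguments are indeed in the same spirit as yours (comparison of the two quasimetrics via the divergence estimate of Theorem \ref{int_lines}). Your reduction is correct as far as it goes: the identity $d_{\infty}(u,v)=d_{\infty}^g(u,\widehat v)$ holds by Definitions \ref{d} and \ref{d_g}, since both quasimetrics read off the same first-kind coordinates (along $X_i$ and along $\widehat X_i^g$ respectively); the application of Theorem \ref{int_lines} with nilpotentization point $g$ and initial point $u$ is legitimate ($v_i=w_i\varepsilon^{\deg X_i}$ with bounded $w_i$, and $\rho(g,u)=O(\varepsilon)$), giving $d^g_{\infty}(v,\widehat v)=O(\varepsilon^{1+\alpha/M})$ uniformly; and you are right that the quasi-triangle inequality alone cannot convert this into a bound on $|d^g_{\infty}(u,\widehat v)-d^g_{\infty}(u,v)|$, and that by left invariance the remaining issue is purely a statement about the homogeneous norm $\|P\|=\max_k|P_k|^{1/\deg X_k}$ on the nilpotent group: given $\|r\|=O(\varepsilon^{1+\alpha/M})$, show $\bigl|\,\|P*r\|-\|P\|\,\bigr|=O(\varepsilon^{1+\alpha/M})$.

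The gap is in how you finish this last step. Your Campbell--Hausdorff estimate $|(P*r)_k-P_k|=O(\varepsilon^{\deg X_k+\alpha/M})$ is correct, but, as you yourself concede, the passage through $t\mapsto|t|^{1/\deg X_k}$ breaks down when $P_k$ is anomalously small, and your proposed repair --- a ``sharper, coordinatewise form of the divergence estimate underlying Theorem \ref{int_lines}'' --- appeals to a statement that is neither available in the paper nor actually needed. The scalar bound on $\|r\|$ suffices; the fix is algebraic, not analytic. For any such homogeneous norm one has $\bigl|\,\|P*r\|-\|P\|\,\bigr|\le C\|r\|$ for all $P,r$ (cf.~\cite{fs}), proved by homogeneity rather than coordinatewise calculus: if $\|r\|\ge\tfrac12\|P\|$ this follows from the quasi-triangle inequality for $\|\cdot\|$; otherwise, working in the global group $(\mathbb R^N,*)$ and applying the automorphism $\delta^g_{1/t}$ with $t=\|P\|$ (which scales both norms and commutes with the product), one may assume $\|P\|=1$ and $\sigma:=\|r\|<\tfrac12$. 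Then every Campbell--Hausdorff monomial in $(P*r)_k-P_k$ contains an $r$-factor and all variables range over a fixed compact set, so $|(P*r)_k-P_k|\le C_1\sigma$ for every $k$, with $C_1$ uniform in $g\in\overline{\mathcal U}$. Now the max-structure of the norm does all the work: since $|P_k|\le1$ for all $k$, we get $\|P*r\|\le\max_k(1+C_1\sigma)^{1/\deg X_k}\le1+C_1\sigma$; and choosing $j$ with $|P_j|=1$ we get $\|P*r\|\ge(1-C_1\sigma)^{1/\deg X_j}\ge1-C_1\sigma$. The ``anomalously small'' coordinates that worried you simply never attain the maximum, so they are harmless; no refined estimate is needed. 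Undoing the dilation multiplies everything by $t$ and yields the lemma, after which your argument closes: $|d_{\infty}(u,v)-d^g_{\infty}(u,v)|\le C\,d^g_{\infty}(v,\widehat v)=O(\varepsilon^{1+\alpha/M})$ uniformly in $g$, $u$, $v$. In summary, your strategy is sound and completable, but as written the final transfer is unproved, and the remedy you point to is the wrong one.
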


\begin{Theorem}\label{cc_teor} Dilations from  Definition $\ref{dil_cc}$
induce on the quasimetric space $({\mathcal U},d_{\infty})$ a
strong uniform nondegenerate dilation structure with the conical
quasimetric $($$d^x$ from Axiom $(A3)$$)$
$d^g_{\infty}$.\end{Theorem}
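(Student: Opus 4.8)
The plan is to verify the axioms (A0)--(A4) of Definition \ref{dil_struct} one by one for the family $\{\Delta^g_\varepsilon\}$ of Definition \ref{dil_cc}, taking $d^x=d^g_\infty$ (Definition \ref{d_g}) as the candidate conical quasimetric. Throughout I would freely pass between $\Delta^g_\varepsilon$ and the nilpotent dilation $\delta^g_{g,\varepsilon}$ of Definition \ref{dil_cc_nilp} by means of Proposition \ref{del_eq}, which is convenient because in the normal coordinates of Definition \ref{d_g} the map $\delta^g_{g,\varepsilon}$ is simply the coordinate scaling $v_i\mapsto\varepsilon^{\deg X_i}v_i$. Axioms (A1) and (A2) are then immediate from the explicit exponential formula: fixing $g$ gives $\Delta^g_\varepsilon g=g$ and $\Delta^g_1=\mathrm{id}$, letting $\varepsilon\to0$ sends all coordinates to $0$ so that $\Delta^g_\varepsilon y\to g$, and the identity $\varepsilon^{\deg X_i}\mu^{\deg X_i}=(\varepsilon\mu)^{\deg X_i}$ yields the semigroup relation in (A2). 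For (A0), continuity in $\varepsilon$ is clear from the same formula, while the ball inclusions $\operatorname{Box}(g,\tilde r\varepsilon)\subseteq\Delta^g_\varepsilon\operatorname{Box}(g,\tilde r)\subset\operatorname{Box}(g,\tilde r)$ follow by combining the cone property of $d^g_\infty$ (Proposition \ref{cone_prop_cc}), under which $\delta^g_{g,\varepsilon}$ carries $\operatorname{Box}^g(g,\tilde r)$ exactly onto $\operatorname{Box}^g(g,\tilde r\varepsilon)$, with the coincidence $\operatorname{Box}(g,r)=\operatorname{Box}^g(g,r)$ of Proposition \ref{box}.

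The core of (A3) is the limit $\lim_{\varepsilon\to0}\varepsilon^{-1}d_\infty(\Delta^g_\varepsilon u,\Delta^g_\varepsilon v)=d^g_\infty(u,v)$. Since $u,v$ range over a fixed ball, the dilated points $\Delta^g_\varepsilon u,\Delta^g_\varepsilon v$ lie in $\operatorname{Box}(g,C\varepsilon)$ for a suitable constant $C$, so the Local Approximation Theorem \ref{lat} applies and gives $|d_\infty(\Delta^g_\varepsilon u,\Delta^g_\varepsilon v)-d^g_\infty(\Delta^g_\varepsilon u,\Delta^g_\varepsilon v)|=O(\varepsilon^{1+\alpha/M})$. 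The cone property (Proposition \ref{cone_prop_cc}) rewrites the second quasimetric as $d^g_\infty(\Delta^g_\varepsilon u,\Delta^g_\varepsilon v)=\varepsilon\,d^g_\infty(u,v)$, whence
$$\left|\tfrac1\varepsilon d_\infty(\Delta^g_\varepsilon u,\Delta^g_\varepsilon v)-d^g_\infty(u,v)\right|=O(\varepsilon^{\alpha/M})\xrightarrow[\varepsilon\to0]{}0.$$
The uniformity in $u,v$ and in $g$ over a compact set is inherited directly from the corresponding uniformity in Theorem \ref{lat}, so the structure is uniform with $d^x=d^g_\infty$. Nondegeneracy is then automatic, since $d^g_\infty$ is itself a quasimetric, so $d^g_\infty(u,v)=0$ forces $u=v$.

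The main obstacle is (A4): the existence of $\lim_{\varepsilon\to0}\Lambda^g_\varepsilon(u,v)$ with $\Lambda^g_\varepsilon(u,v)=\Delta^{\Delta^g_\varepsilon u}_{\varepsilon^{-1}}\Delta^g_\varepsilon v$, which is exactly the point where the true fields $X_i$ and the nilpotentized fields $\widehat X^g_i$ must be compared quantitatively. The plan is to introduce the purely nilpotent counterpart $\widehat\Lambda^g_\varepsilon(u,v)$, obtained by replacing each $\Delta$ by the corresponding nilpotent dilation, and to check that $\widehat\Lambda^g_\varepsilon$ has a limit as $\varepsilon\to0$: because the nilpotent dilations are genuine group automorphisms of ${\mathcal G}^g$, this combination is computed algebraically through the graded nilpotent structure (essentially the left-translation expression furnished by the Campbell--Hausdorff formula of Definition \ref{loc_carno}) and converges, using also that the fields $\widehat X^h_i$ depend continuously on the base point $h$, so that the drift $\Delta^g_\varepsilon u\to g$ is harmless. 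The decisive step is to estimate the discrepancy $d^g_\infty(\Lambda^g_\varepsilon(u,v),\widehat\Lambda^g_\varepsilon(u,v))$ by means of the divergence estimate Theorem \ref{int_lines}: each replacement of an $X_i$-exponential by a $\widehat X_i$-exponential along the nested dilations contributes an error of order $\varepsilon\,[\Theta]\rho^{\alpha/M}$, and after rescaling by $\varepsilon^{-1}$ these accumulate to $O(\varepsilon^{\alpha/M})$, vanishing uniformly in $g$ on compacta and in $u,v$ on a ball. Since $\widehat\Lambda^g_\varepsilon$ converges and the difference tends to zero, $\Lambda^g_\varepsilon$ converges as well, establishing (A4) and hence the strong, uniform, nondegenerate dilation structure. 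The delicate points I would watch most carefully are the bookkeeping of which base point each nested dilation is centered at, and the verification that the errors from Theorem \ref{int_lines} genuinely remain of order $\varepsilon^{1+\alpha/M}$ before the division by $\varepsilon$.
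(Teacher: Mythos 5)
Your treatment of (A0)--(A2), non-degeneracy, and (A3) matches the paper's: the paper dismisses (A0)--(A2) and non-degeneracy as consequences of properties of exponential maps, and gets (A3) with uniformity from Theorem \ref{lat} exactly as you do (your explicit combination of Theorem \ref{lat} with Proposition \ref{cone_prop_cc} and division by $\varepsilon$ is the intended argument). The core idea for (A4) --- compare the true combination with a nilpotentized counterpart, compute the latter's limit algebraically in the graded group, and control the discrepancy by the divergence estimate of Theorem \ref{int_lines} --- is also the paper's idea. But your execution of (A4) has a genuine gap, precisely at the point you flag as ``delicate bookkeeping.''

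The problem is your choice to work with $\Lambda^g_\varepsilon(u,v)=\Delta^{\Delta^g_\varepsilon u}_{\varepsilon^{-1}}\Delta^g_\varepsilon v$ directly. Write $u_\varepsilon=\Delta^g_\varepsilon u$. The inner dilation causes no error at all: by Proposition \ref{del_eq}, $\Delta^g_\varepsilon v=\delta^g_{g,\varepsilon}v$ exactly. All of the discrepancy sits in the \emph{outer, expanding} dilation $\Delta^{u_\varepsilon}_{\varepsilon^{-1}}$, which is centered at the moving point $u_\varepsilon\neq g$; there Proposition \ref{del_eq} does \emph{not} identify it with its nilpotent counterpart $\delta^{u_\varepsilon}_{g,\varepsilon^{-1}}$ (that identification requires the center to coincide with the nilpotentization point). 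So the error is not of the form ``contracting-scale error of size $\varepsilon[\Theta]\rho^{\alpha/M}$, then multiplied by $\varepsilon^{-1}$'': you must compare $\exp\bigl(\sum_i a_i\varepsilon^{-\deg X_i}X_i\bigr)(u_\varepsilon)$ with $\exp\bigl(\sum_i b_i\varepsilon^{-\deg X_i}\widehat X^g_i\bigr)(u_\varepsilon)$, where $a_i$ and $b_i$ are the (different!) coordinates of the same point $\Delta^g_\varepsilon v$ in the $X$-frame and in the $\widehat X^g$-frame centered at $u_\varepsilon$. Theorem \ref{int_lines} compares flows with the \emph{same} coefficients, \emph{contracted} by $\varepsilon^{\deg X_i}$, from the same starting point; it does not directly bound this two-frame, expanding comparison, and an additional coordinate-comparison estimate (with control surviving the $\varepsilon^{-1}$ blow-up) would have to be proved.

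The paper sidesteps exactly this by invoking the converse direction of Proposition \ref{sig_inv}: for (A4) it suffices to establish uniform convergence of $\Sigma^g_\varepsilon(u,v)=\Delta^g_{\varepsilon^{-1}}\Delta^{u_\varepsilon}_\varepsilon v$ and of $\operatorname{inv}^g_\varepsilon(u)=\Delta^{u_\varepsilon}_{\varepsilon^{-1}}g$. In $\Sigma^g_\varepsilon$ the roles are reversed: the \emph{expanding} dilation is centered at $g$, hence equals $\delta^g_{g,\varepsilon^{-1}}$ identically (Proposition \ref{del_eq}), and the only discrepancy is in the inner \emph{contracting} dilation $\Delta^{u_\varepsilon}_\varepsilon v$ versus $\delta^{u_\varepsilon}_{g,\varepsilon}v$ --- which is exactly the situation of Theorem \ref{int_lines} (same coefficients, same starting point $u_\varepsilon$, contraction), giving an error $\varepsilon[\Theta]\rho(g,u_\varepsilon)^{\alpha/M}$ that the cone property (Proposition \ref{cone_prop_cc}) converts into an $o(1)$ bound after applying the common map $\Delta^g_{\varepsilon^{-1}}$. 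The limit of the nilpotentized $\Sigma$ is then computed purely inside the single group ${\mathcal G}^g$ (so your appeal to continuity of $h\mapsto\widehat X^h_i$ is unnecessary), and $\operatorname{inv}^g_\varepsilon(u)$ is computed in closed form. I recommend you restructure your (A4) argument through Proposition \ref{sig_inv} in this way; alternatively, you would need to state and prove the missing lemma comparing the $X$- and $\widehat X^g$-normal coordinates at $u_\varepsilon$ with errors small enough to survive division by $\varepsilon$, which is substantially more work than the route the paper takes.
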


\begin{proof} Axioms (A0)~--- (A2) and non-degeneracy of Definition \ref{dil_struct}
obviously hold due to properties of exponential mappings; (A3) and
uniformity directly follow from Theorem \ref{lat}.

Axiom (A4) follows from group operation properties and Theorem
\ref{int_lines}. Indeed, let
$u=\exp\Bigl(\sum\limits_{i=1}^Nu_iX_i\Bigr)(g),\ $
$v=\exp\Bigl(\sum\limits_{i=1}^Nv_iX_i\Bigr)(g)\in {\mathcal U}$.
We need to show the existence and uniformity of the limits of
$\Sigma_{\varepsilon}^g(u,v)=
\Delta_{\varepsilon^{-1}}^g\Delta_{\varepsilon}^{\Delta_{\varepsilon}^gu}v$
and
$\operatorname{inv}^g_{\varepsilon}(u)=\Delta_{\varepsilon^{-1}}^{\Delta_{\varepsilon}^gu}g$,
when
 $\varepsilon\to 0$ (see Proposition \ref{sig_inv}).

First we prove the existence of the limit on the local group (i.
e. replacing $\Delta^g_{\varepsilon}$ by
$\delta^g_{g,\varepsilon}$) According to (A2),
$\lim\limits_{\varepsilon\to
0}\Delta_{\varepsilon}^xu=\lim\limits_{\varepsilon\to
0}u_{\varepsilon}=g$. By means of \eqref{exp} we can write
$$v=\exp\Bigl(\sum\limits_{i=1}^N\tilde{v}^{\varepsilon}_iX_i\Bigr)
(u_{\varepsilon}).$$ Since the coordinates of the first kind are
uniquely defined,
 \begin{equation}\label{predel}\lim\limits_{\varepsilon\to
0}\tilde{v}^{\varepsilon}_i=v_i,\ i=1,\ldots,N.\end{equation}
  Now let
$$a=\delta^{u_{\varepsilon}}_{g,\varepsilon}v=\exp\Bigl(\sum\limits_{i=1}^N\tilde{v}^
{\varepsilon}_i \varepsilon^{\operatorname{deg}
X^g_i}\widehat{X}_i\Bigr)\circ \exp\Bigl(\sum\limits_{i=1}^N
u_i\varepsilon^{\operatorname{deg} X_i}\widehat{X}^g_i\Bigr)(g).$$
Then
$$\Sigma_{\varepsilon}^g(u,v)=\delta_{g,\varepsilon^{-1}}^ga=
\exp\Bigl(\sum\limits_{i=1}^N\tilde{v}^{\varepsilon}_i
(\delta^g_{g,\varepsilon^{-1}})_*(\varepsilon^{\operatorname{deg}
X_i}\widehat{X}^g_i)\Bigr)\circ \exp\Bigl(\sum\limits_{i=1}^N
u_i(\delta^g_{g,\varepsilon^{-1}})_*(
\varepsilon^{\operatorname{deg} X_i}\widehat{X}^g_i)\Bigr)(g).$$
Using group homogeneity and \eqref{predel}, we get the existence
of the uniform (on $g$) limit
$$\lim\limits_{\varepsilon\to
0}\Sigma_{\varepsilon}^g(u,v)=\exp\Bigl(\sum\limits_{i=1}^Nv_i\widehat{X}^g_i\Bigr)\circ
\exp\Bigl(\sum\limits_{i=1}^Nu_i\widehat{X}^g_i\Bigr)(g).$$

Now let us estimate the difference between the two combinations.
From Properties \ref{del_eq}, \ref{cone_prop_cc} and Theorem
\ref{int_lines} we infer
$$d_{\infty}^g\left(\Delta_{\varepsilon^{-1}}^g
\Delta_{\varepsilon}^{\Delta_{\varepsilon}^gu}v,\delta_{g,\varepsilon^{-1}}^g
\delta_{g,\varepsilon}^{\delta_{g,\varepsilon}^gu}v\right)=
d_{\infty}^g\left(\Delta_{\varepsilon^{-1}}^g
\Delta_{\varepsilon}^{\Delta_{\varepsilon}^gu}v,\Delta_{\varepsilon^{-1}}^g
\delta_{g,\varepsilon}^{\Delta_{\varepsilon}^gu}v\right)=$$
$$=\varepsilon^{-1}d_{\infty}^g\left(\Delta_{\varepsilon}^{u_{\varepsilon}}v,
\delta_{g,\varepsilon}^{u_{\varepsilon}}v\right)=\varepsilon^{-1}\cdot
O\left(\varepsilon^{1+\frac{1}{\alpha}}\right)\to 0$$ when
$\varepsilon\to 0$, which implies the uniform convergence of
$\Sigma_{\varepsilon}^g(u,v)$.

Concerning the inverse element, we have
$$u_{\varepsilon}=\exp\Bigl(\sum\limits_{i=1}^Nu_i\varepsilon^{\deg X_i}X_i\Bigr)
(g),\ g=\exp\Bigl(\sum\limits_{i=1}^N-u_i\varepsilon^{\deg
X_i}X_i\Bigr) (u_{\varepsilon}),$$ hence
$$\text{inv}^g(u,v)=\text{inv}_{\varepsilon}^g(u,v)=\exp\Bigl(\sum\limits_{i=1}^N
-u_iX_i\Bigr)(g),$$
 which finishes the proof.

\end{proof}

\begin{Remark}
In contrast to the proof of a similar assertion in \cite{bul_sr1},
 we do not use, for proving Theorem \ref{cc_teor},
 the normal frames technique \cite{bel}.

 Nevertheless, our considerations include, as a particular case, the ``classical''
 sub-Riemannian setting, although in this setting the number of nontrivial commutators
 of ``horizontal'' vector fields can be bigger then the dimension $N$ of the manifold
 $\mathbb M$. Indeed, the nilpotent Lie algebras, defined by different bases, are
 isomorphic to each other
 due to the functorial property of the tangent cone \cite{v,vk}. Analogs of the basic
 Theorems  \ref{lat}, \ref{int_lines}, needed for the proof of Theorem \ref{cc_teor}
  for the intrinsic metric $d_c$ are proved in
 \cite{bel,vk,vk1}.\end{Remark}

\begin{Remark} An analog of Theorem \ref{cc_teor} can be proved
for some other quasimetrics equivalent to $d_{\infty}$,  looking
like e. g. in \cite{bel}.

Note also that proofs in \cite{karm} do not use tools concerned
with the Baker-Campbell-Hausdorff formula.

\end{Remark}

\section{Differentiability}
 Let $(\mathbb X,d_{\mathbb
X},\delta)$ and $(\mathbb Y,d_{\mathbb Y},\tilde{\delta})$ be two
quasimetric spaces with strong nondegenerate dilation structures.
In this section we denote the local group $\mathcal G^x$  at
$x\in\mathbb X$ ($\mathcal G^y$ at $y\in\mathbb Y$ ) by the symbol
$\mathcal G^x\mathbb X$ ($\mathcal G^y\mathbb Y$). Quasimetrics on
them will be denoted by $d^x$ and $ d^y$ respectively.

Recall that a {\it $\delta$-homogeneous homomorphism} of graded
nilpotent  groups $\mathbb G$ and $\widetilde{\mathbb G}$ with
one-parameter groups of dilations $\delta$ and $\tilde\delta$
\cite{fs} respectively is a continuous homomorphism $L:\mathbb
G\to\widetilde{\mathbb G}$ of these groups such that

 $$L\circ \delta=\tilde\delta \circ L.$$

The case of local  graded nilpotent groups $\mathcal G$ and
$\widetilde{\mathcal G}$ with one-parameter groups of dilations
$\delta$ and $\tilde\delta$ respectively
  is different from this
only in that the equality $L\circ \delta(v)=\tilde\delta \circ
L(v)$
 holds only for $v\in{\mathcal G}$
  and $t>0$
such that  $\delta_t v\in{\mathcal G}$ and $\tilde \delta_t
L(v)\in \widetilde{\mathcal G}$.

\begin{Definition}\label{defdiff}
Given two quasimetric spaces $(\mathbb X,d_{\mathbb X},\delta)$
and $(\mathbb Y,d_{\mathbb Y},\tilde\delta)$ with strong uniform
 nondegenerate dilation structures, and a~set $E\subset\mathbb X$.
A mapping $f:E\to {\mathbb Y}$ is called {\it
$\delta$-differentiable} at a point $g\in E$ if there exists a
$\delta$-homogeneous homomorphism
 $L:\bigl(\mathcal G^g\mathbb X, d^g\bigr)\to
 \bigl(\mathcal G^{f(g)}\mathbb Y,   d^{f(g)}\bigr)$
of the local nilpotent tangent cones such that
\begin{equation}\label{estdiff}
   d^{f(g)}(f(v),L(v))=o\bigl(d^g(g,v)\bigr)
  \quad\text{as  $E\cap {\mathcal G^g\mathbb X}\ni v\to g$}.
\end{equation}
\end{Definition}

A  $\delta$-homogeneous homomorphism $L:\bigl({\mathcal
G}^g\mathbb X,d^g\bigr)\to \bigl({\mathcal G}^{f(g)}\mathbb
Y,{d}^{f(g)}\bigr)$ satisfying condition \eqref{estdiff}, is
called a $\delta$-{\it differential\/} of the mapping
$f:E\to{\mathbb Y}$ at $g\in E$ on $E$ and is denoted by  $Df(g)$.
 It can be proved like in \cite{v,v1} that if ${E=\mathbb X}$
 then the $\delta$-differential is unique.

Moreover, it is easy to verify that a homomorphism
$L:\bigl({\mathcal G}^g\mathbb X,d^g\bigr)\to \bigl({\mathcal
G}^{f(g)}\mathbb Y,{d}^{f(g)}\bigr)$ satisfying \eqref{estdiff}
commutes with the one-parameter dilation group:
\begin{equation}\label{commut}
\tilde \delta^{f(g)}_t\circ L=L\circ \delta^g_t,
\end{equation}
i.e., $L$ is $\delta$-homogeneous homomorphism.

In the case of Carnot groups, the introduced concept of
differentiability coincides with the concept of
$P$-differentiability given by P. Pansu in \cite{pan}.

The following assertion is similar to the corresponding statement
of \cite[Proposition 2.3]{v1}.

 \begin{Proposition}\label{Prop2.1}
 Definition~{\rm{\ref{defdiff}}}  is equivalent to each
of the following assertions:

$1)$ $  d^{f(g)}\bigl(\tilde\delta^{f(g)}_{t^{-1}}
f\bigl(\delta^g_{t}(v)\bigr),L(v)\bigr)= o(1)$ as $t\to0$, where
$o(\cdot)$ is uniform in the~points $v$ of any compact part of
$\mathcal G^g\mathbb X;$

$2)$ $ d^{f(g)}(f(v),L(v))=o\bigl(d_{\mathbb X}(g,v)\bigr)$
  as  $E\cap {\mathcal G^g\mathbb X}\ni v\to g;$

$3)$ $d_{\mathbb Y}(f(v),L(v))=o\bigl(d^g(g,v)\bigr)$
  as  $E\cap {\mathcal G^g\mathbb X}\ni v\to g;$

$4)$ $ d_{\mathbb Y} (f(v),L(v))=o\bigl(d_{\mathbb X}(g,v)\bigr)$
  as  $E\cap {\mathcal G^g\mathbb X}\ni v\to g;$

$5)$  $  d_{\mathbb Y}\bigl(f\bigl(\delta^g_{t}(v)\bigr),L\bigl(
\delta^{g}_{t}v\bigr)\bigr)= o(t)$ as $t\to0$, where $o(\cdot)$ is
uniform in the points $v$ of any compact part of $\mathcal
G^g\mathbb X$.
\end{Proposition}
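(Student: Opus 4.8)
The plan is to establish the equivalence of Definition~\ref{defdiff} with assertions 1)--5) by a ring of implications, exploiting the two basic tools at our disposal: the cone property (Proposition~\ref{cone_prop}), which lets us rescale distances exactly under dilations, and Axiom (A3), which says that $\frac1\varepsilon d(\delta^x_\varepsilon u,\delta^x_\varepsilon v)\to d^x(u,v)$ uniformly, so that $d$ and $d^x$ agree to first order near the base point. First I would record the two elementary quantitative facts that make everything go: from the cone property, $d^{f(g)}(\tilde\delta^{f(g)}_{t^{-1}}f(\delta^g_t v),L(v)) = t^{-1}d^{f(g)}(f(\delta^g_t v),\tilde\delta^{f(g)}_t L(v))$ after using that $L$ commutes with dilations, equation~\eqref{commut}; and from (A3) together with non-degeneracy, that $d^g(g,v)$ and $d_{\mathbb X}(g,v)$ are comparable (each is $(1+o(1))$ times the other) as $v\to g$, and likewise for $d^{f(g)}$ versus $d_{\mathbb Y}$ on the target.

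The backbone of the argument is then a substitution of variables $v\mapsto\delta^g_t(v)$ that converts the ``as $v\to g$'' statements into ``as $t\to0$'' statements and vice versa. Setting $v=\delta^g_t(w)$ with $w$ ranging over a fixed compact set, the homogeneity $d^g(g,\delta^g_t w)=t\,d^g(g,w)$ shows that the little-$o$ estimate \eqref{estdiff} of Definition~\ref{defdiff}, namely $d^{f(g)}(f(v),L(v))=o(d^g(g,v))$, is precisely the statement that $t^{-1}d^{f(g)}(f(\delta^g_t w),L(\delta^g_t w))\to0$ uniformly on compacta, which by the cone-property computation above is exactly assertion~1). This gives Definition~\ref{defdiff}~$\Leftrightarrow$~1) directly. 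The passage to assertion~5) is the same computation with $d^{f(g)}$ replaced by $d_{\mathbb Y}$: since by (A3) these two quasimetrics on the target differ multiplicatively by $1+o(1)$ at the relevant (shrinking) scales, the $o(t)$ estimate in 5) is equivalent to the $o(1)$ estimate in 1).

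The remaining equivalences 2), 3), 4) are obtained by replacing, one factor at a time, an intrinsic quasimetric $d^g$ (resp.\ $d^{f(g)}$) by the ambient $d_{\mathbb X}$ (resp.\ $d_{\mathbb Y}$) inside the estimate of Definition~\ref{defdiff}. Each such replacement is justified by the comparability $d^g(g,v)=(1+o(1))d_{\mathbb X}(g,v)$ and $d^{f(g)}(a,b)=(1+o(1))d_{\mathbb Y}(a,b)$ as the arguments approach the base points, which follows from (A3) and non-degeneracy as recorded in the first paragraph. Concretely: 2) swaps the right-hand $d^g$ for $d_{\mathbb X}$, 3) swaps the left-hand $d^{f(g)}$ for $d_{\mathbb Y}$, and 4) swaps both; since each swap changes the relevant quantity only by a factor $1+o(1)$, it preserves the class of little-$o$ functions, so all four variants are mutually equivalent.

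The step I expect to be the main obstacle is the careful handling of uniformity of the $o(\cdot)$ terms, especially in the change of variables $v=\delta^g_t(w)$. The uniformity in (A3) and in the definition of the strong uniform dilation structure is stated on fixed balls $\bar B^d(x,R)$, but when I write $v=\delta^g_t(w)$ the point $v$ lives in a ball of radius $\sim t$, so I must check that the multiplicative $1+o(1)$ comparison between $d$ and $d^g$ is genuinely uniform as the scale $t\to0$, rather than merely pointwise; this is where the \emph{uniform} hypothesis on the dilation structure is essential, and where one must verify that $L(\delta^g_t w)=\tilde\delta^{f(g)}_t L(w)$ keeps the target arguments in the regime where the target-side comparison is valid. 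Once this uniformity bookkeeping is in place, the equivalences follow from the two rescaling identities and the comparability of the quasimetrics, with no further ideas needed.
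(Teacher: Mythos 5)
Your overall architecture coincides with the paper's: you prove Definition~\ref{defdiff} $\Leftrightarrow$ 1) by the substitution $v=\delta^g_t(w)$ combined with the commutation identity \eqref{commut} and the cone property of Proposition~\ref{cone_prop}, then pass to 5) via Axiom \eqref{a3}, and obtain 2)--4) by exchanging intrinsic and ambient quasimetrics. However, the tool you invoke for every swap involving the \emph{target} metric (the step 1) $\Leftrightarrow$ 5) and the swaps producing 3) and 4)) is flawed: you claim $d^{f(g)}(a,b)=(1+o(1))\,d_{\mathbb Y}(a,b)$ for pairs $a,b$ approaching $f(g)$. Axiom (A3) gives no multiplicative comparison for arbitrary pairs; it gives the \emph{additive} estimate $\bigl|d_{\mathbb Y}\bigl(\tilde\delta^{f(g)}_t u,\tilde\delta^{f(g)}_t w\bigr)-d^{f(g)}\bigl(\tilde\delta^{f(g)}_t u,\tilde\delta^{f(g)}_t w\bigr)\bigr|=o(t)$, uniformly for $u,w$ in a fixed compact set. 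The pairs you must handle are $a=f(\delta^g_t v)$, $b=L(\delta^g_t v)$, and the distance between them is itself $o(t)$ --- that is the very content of the proposition --- i.e., of \emph{smaller} order than the scale $t$ at which the two quasimetrics agree. In that regime the ratio $d_{\mathbb Y}(a,b)/d^{f(g)}(a,b)$ need not tend to $1$ and may be unbounded (the additive discrepancy permitted by (A3) can dominate the distance being measured), so the assertion that ``each swap changes the relevant quantity only by a factor $1+o(1)$'' fails exactly where you apply it. (If instead you meant the comparison only for distances from the basepoint $f(g)$, it is true but inapplicable, since neither $f(v)$ nor $L(v)$ is the basepoint.)

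The gap is localized and the repair is precisely what the paper does in the computation \eqref{5.3}: work additively. Since $a$ and $b$ both lie within $O(t)$ of $f(g)$, Axiom (A0) lets you write them as $\tilde\delta^{f(g)}_t$-images of points in a fixed ball; the additive estimate then gives $\bigl|d_{\mathbb Y}(a,b)-d^{f(g)}(a,b)\bigr|=o(t)$ uniformly, and because every little-$o$ in assertions 1), 3), 5) is measured against the scale $t\asymp d^g(g,v)$, this additive error is harmless: $d_{\mathbb Y}(a,b)=o(t)$ iff $d^{f(g)}(a,b)=o(t)$. By contrast, your multiplicative comparison is correct on the \emph{source} side, where one argument is the basepoint $g$ itself: there $d^g(g,v)\asymp d_{\mathbb X}(g,v)$ does hold (via (A3), (A0), nondegeneracy and compactness), which is why your treatment of 2) and of the source half of 4) is sound and matches the paper's ``comparing the metrics'' step. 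With the target-side swaps rewritten in additive form, your proof becomes correct and essentially identical to the paper's.
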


\begin{proof}
Consider a point $v$ of a compact part of  $\mathcal G^g\mathbb X$
and a sequence $\varepsilon_i\to0$ as $i\to0$ such that
$\delta^g_{\varepsilon_i}v\in E$ for all  $i\in \mathbb N$.
From~\eqref{estdiff} we have $
d^{f(g)}\bigl(f\bigl(\tilde\delta^g_{\varepsilon_i}v\bigr),
 L\bigl(\tilde\delta^g_{\varepsilon_i}v\bigr)\bigr)=
 o\bigl(d^g\bigl(g,\delta^g_{\varepsilon_i}v\bigr)\bigr)
= o({\varepsilon_i})$. In view of~\eqref{commut}, we infer
 $$ d^{f(g)}\bigl(\tilde\delta^{f(g)}_{\varepsilon_i}
\bigl(\tilde \delta^{f(g)}_{\varepsilon_i^{-1}}f
\bigl(\delta^g_{\varepsilon_i}v\bigr)\bigr),\tilde\delta^{f(g)}_{\varepsilon_i}L(v)
\bigr)=o(\varepsilon_i)\quad \text{uniformly in~$v$.}$$ From here,
applying the cone property of Proposition \ref{cone_prop}, we
obtain just item~1. Obviously, the arguments are reversible.
Item~1 is equivalent to item~5 since in view of \eqref{a3} we have
\begin{multline}\label{5.3}
\bigl|
  d_{\mathbb Y}\bigl(\tilde\delta^{f(g)}_{\varepsilon_i}
\bigl(\tilde \delta^{f(g)}_{\varepsilon_i^{-1}}f
\bigl(\delta^g_{\varepsilon_i}v\bigr)\bigr),\tilde\delta^{f(g)}_{\varepsilon_i}L(v)
\bigr)-
  d^{f(g)}\bigl(\tilde\delta^{f(g)}_{\varepsilon_i}
\bigl(\tilde \delta^{f(g)}_{\varepsilon_i^{-1}}f
\bigl(\delta^g_{\varepsilon_i}v\bigr)\bigr),\tilde\delta^{f(g)}_{\varepsilon_i}L(v)
\bigr)\bigr|\\
=\bigl|
 d_{\mathbb Y}\bigl(\tilde\delta^{f(g)}_{\varepsilon_i}
\bigl( \tilde\delta^{f(g)}_{\varepsilon_i^{-1}}f
\bigl(\delta^g_{\varepsilon_i}v\bigr)\bigr),\tilde\delta^{f(g)}_{\varepsilon_i}L(v)
\bigr)- o(\varepsilon_i)
  \bigr|
 =o(\varepsilon_i)\quad \text{uniformly in~$v$.}
 \end{multline}

Item 5 implies item 3 and vice versa. By comparing the metrics:
$d^{g}(g,v)=O\bigl( d_{\mathbb X}(g,v) \bigr)$ and $ d_{\mathbb
X}(g,v) =O\bigl(d^{g}(g,v)\bigr)$,
 we obtain the equivalence of the items~3 and 4. The proof of an equivalence
 of the items 4 and 2 is similar to \eqref{5.3}.
\end{proof}

Let us generalize the chain rule of paper \cite{v1}.

\begin{Theorem}\label{chainrule}
Suppose that $\mathbb X, {\mathbb Y},\mathbb Z$ are three
quasimetric spaces with strong uniform nondegenerate dilation
structures, $E$ is a set in $\mathbb X$, and $f:E\to{\mathbb Y}$
is a mapping from $E$ into $\mathbb Y$ $\delta$-differentiable at
a point $g\in E$. Suppose also that $F$ is a set in $\mathbb Y$,
$f(E)\subset Y$ and $\varphi:F\to \mathbb Z$ is a mapping from $F$
into $\mathbb Z$\,\, $\tilde\delta$-differentiable at
$p=f(g)\in{\mathbb Y}$. Then the composition $\varphi\circ
f:E\to{\mathbb Z}$ is $\delta$-differentiable at $g$ and
$$
D(\varphi\circ f)(g)=D\varphi(p)\circ Df(g).
$$
\end{Theorem}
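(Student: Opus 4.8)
The plan is to establish the chain rule by directly verifying the differentiability estimate for the composition $\varphi\circ f$, using the candidate differential $L := D\varphi(p)\circ Df(g)$. First I would set up notation: write $A = Df(g) : \mathcal{G}^g\mathbb{X} \to \mathcal{G}^p\mathbb{Y}$ and $B = D\varphi(p) : \mathcal{G}^p\mathbb{Y} \to \mathcal{G}^{\varphi(p)}\mathbb{Z}$ for the two given $\delta$-homogeneous homomorphisms, with $p = f(g)$. Since a composition of homomorphisms is a homomorphism, and since $B\circ A$ commutes with the dilation groups (because each of $A$, $B$ does, by \eqref{commut}), the candidate $L = B\circ A$ is itself a $\delta$-homogeneous homomorphism of the appropriate local tangent cones. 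So the only real content is the estimate \eqref{estdiff} for $\varphi\circ f$ with this $L$.

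The key steps, in order, are as follows. I would estimate $d^{\varphi(p)}\bigl(\varphi(f(v)), B(A(v))\bigr)$ by inserting the intermediate term $B(f(v))$ and applying the generalized triangle inequality for the quasimetric $d^{\varphi(p)}$ (Proposition \ref{d_x} guarantees it is a quasimetric with the same constant $Q_{\mathbb Z}$):
\begin{equation*}
d^{\varphi(p)}\bigl(\varphi(f(v)), B(A(v))\bigr) \leq Q_{\mathbb Z}\Bigl( d^{\varphi(p)}\bigl(\varphi(f(v)), B(f(v))\bigr) + d^{\varphi(p)}\bigl(B(f(v)), B(A(v))\bigr)\Bigr).
\end{equation*}
The first term is controlled by the differentiability of $\varphi$ at $p$: it is $o\bigl(d^p(p, f(v))\bigr)$, and since $f$ is in particular continuous near $g$ with $d^p(p,f(v)) = O\bigl(d^p(p,f(v))\bigr)$, I would reduce this to $o\bigl(d^g(g,v)\bigr)$ by first bounding $d^p(p,f(v))$ in terms of $d^g(g,v)$. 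For the second term, I would use that $B$ is a $\delta$-homogeneous homomorphism and hence, as a continuous homomorphism of local nilpotent groups, is Lipschitz with respect to the homogeneous norms on bounded sets; applying this to the pair $f(v)$, $A(v)$ gives $d^{\varphi(p)}\bigl(B(f(v)),B(A(v))\bigr) = O\bigl(d^p(f(v), A(v))\bigr)$, which is $o\bigl(d^g(g,v)\bigr)$ by the differentiability of $f$ at $g$.

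The main obstacle I anticipate is the second term: one must know that the $\delta$-homogeneous homomorphism $B = D\varphi(p)$ is locally Lipschitz (or at least uniformly continuous with a controlled modulus) with respect to the cone quasimetrics, so that the differentiability estimate for $f$ can be pushed forward through $B$. For $\delta$-homogeneous homomorphisms between graded nilpotent groups this is standard — homogeneity together with continuity forces a Lipschitz bound on bounded sets, exactly as for homogeneous norms on homogeneous groups (cf. Proposition \ref{odnor} and \cite{fs}) — but the argument must be carried out for \emph{local} tangent cones and with quasimetrics rather than metrics, so the generalized triangle inequality constants $Q$ and the symmetry constants $c$ enter and must be tracked. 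A clean way to handle the bookkeeping is to recast everything through the equivalent formulations of differentiability in Proposition \ref{Prop2.1} (items 1 and 5), passing to the scaled maps $\tilde\delta^{p}_{t^{-1}}\circ f\circ\delta^g_t$ and composing the uniform-on-compacta limits; the continuity of $B$ on the compact part then yields the composed estimate directly, with the quasimetric constants absorbed once and for all into the $O(\cdot)$.
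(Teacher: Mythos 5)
Your proposal is correct and follows essentially the same route as the paper's proof: the same insertion of the intermediate term $D\varphi(p)(f(v))$, the generalized triangle inequality with constant $Q_{\mathbb Z}$, the bound $d^p(p,f(v))=O\bigl(d^g(g,v)\bigr)$ obtained via the triangle inequality in $\mathbb Y$, and the pushforward of the estimate for $f$ through $D\varphi(p)$ using its continuity and homogeneity (the Lipschitz-type bound you flag as the main obstacle is exactly the step the paper handles with its parenthetical remark invoking continuity of the homomorphism and \eqref{commut}). No substantive difference in method.
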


\begin{proof} By hypothesis,
$d^{f(g)}(f(v),Df(g)(v))=o\bigl(d^g(g,v)\bigr)$ as $v\to g$ and
also $d^{\varphi(p)}(\varphi(w)$, $D\varphi(p)(w))=o\bigl(
d^p(p,w)\bigr)$
 as $w\to p$.  It follows that $f$ is continuous in $g\in E$
 and $\varphi$ is continuous in $p\in F$. We now infer
  \begin{multline*}
d^{\varphi(p)}((\varphi\circ f)(v),(D\varphi(p)\circ Df(g))(v))\\
\leq Q_{\mathbb
Z}\bigl[d^{\varphi(p)}(\varphi(f(v)),D\varphi(p)(f(v))) +
d^{\varphi(p)}(D\varphi(p)(f(v)),D\varphi(p)(Df(g)(v)))\bigr]\\
\leq o\bigl( d^p(p,f(v))\bigr)+O\bigl(
d^p\bigl(f(v),Df(g)(v)\bigr)\bigr)
\\
\leq o\bigl(d^g(g,v)\bigr)+O\bigl(o\bigl(d^g(g,v)\bigr)\bigr)=
o\bigl(d^g(g,v)\bigr)\quad\text{as $v\to g$},
\end{multline*}
since
  \begin{multline*}
d^p\bigl(p,f(v)\bigr)\leq Q_{\mathbb
Y}\left[d^p\bigl(p,Df(g)(v)\bigr)+
d^p\bigl(f(v),Df(g)(v)\bigr)\right]
\\ =O\bigl(d^g(g,v)\bigr)+o\bigl(d^g(g,v)\bigr)=O\bigl(d^g(g,v)\bigr)
\quad \text{as } v\to g.
\end{multline*}
(The estimate $d^p\bigl(p,Df(g)(v)\bigr)=O\bigl(d^g(g,v)\bigr)$ as
$v\to g$ follows from the continuity of the homomorphism $Df(g)$
and~\eqref{commut}.)
\end{proof}

\begin{Remark}\label{comp}
Note that the concept of differentiability for the quasiconformal
mappings of Carnot-Carath\'eodory manifolds was first suggested by
Margulis and Mostow in~\cite{mm} and is essentially based on
Mitchell's paper~\cite{mit}: {\it A quasiconformal mapping
$\varphi:{\mathbf M}\to{\mathbf N}$ is differentiable at a point $x_0$
in the sense of~{\rm\cite{mm}}
 if the family of mappings $\varphi_t:({\mathbf M},td_{\mathbf
M})\to ({\mathbf N},td_{\mathbf N})$ induced by the mapping
$\varphi:({\mathbf M},d_{\mathbf M})\to ({\mathbf N},d_{\mathbf N})$
converges to a horizontal homomorphism of the tangent cones at the
points $x_0\in{\mathbf M}$ and $\varphi(x_0)\in{\mathbf N}$ as
$t\to\infty$ uniformly on compact sets.} Unfortunately, this
definition is not well suitable for studying the differentials.
The problem is that the tangent cone is a class of isometric
spaces. Dealing with differentials, one would prefer to know what
happens in a fixed direction of a tangent space. In this context,
in applications of differentials it is important to know how a
concrete representative of the tangent cone is geometrically and
analytically connected with the given (quasi)metric space.
   \end{Remark}

\end{document}